\newtheorem{ithm}{Theorem}
\numberwithin{equation}{section}
\newtheorem{thm}{Theorem}[section]
\newtheorem{defn}[thm]{Definition}
\newtheorem{lem}[thm]{Lemma}
\newtheorem{rmk}[thm]{Remark}
\newtheorem{cor}[thm]{Corollary}
\theoremstyle{remark}
\Crefname{thm}{Theorem}{Theorems}
\newtheorem{example}[thm]{Example}
\newcommand{\Sym}{\mathfrak{S}}
\newcommand{\End}{\operatorname{End}}
\newcommand{\Ext}{\operatorname{Ext}}
\newcommand{\Ind}[2]{\operatorname{Ind}_{#1}^{#2}}
\newcommand{\Res}[2]{\operatorname{Res}_{#2}^{#1}}
\newcommand{\Hom}{\operatorname{Hom}}
\newcommand{\mmod}{\operatorname{-mod}}
\newcommand{\K}{\Bbbk}
\newcommand{\Z}{\mathbb{Z}}
\newcommand{\C}{\mathbb{C}}
\newcommand{\Q}{\mathbb{Q}}
\newcommand{\rola}{X}
\newcommand{\al}{\alpha}
\newcommand{\Bi}{\mathbf{i}}
\tikzset{wei/.style={draw=red,double=red!40!white,double distance=1.5pt,thin}}
\newcommand{\Bj}{\mathbf{j}}
\newcommand{\Br}{\mathbf{r}}
\newcommand{\bell}{\boldsymbol{\ell}}
\newcommand{\Ba}{\mathbf{a}}
\newcommand{\oned}{1_{\Delta}}
\newcommand{\eE}{\EuScript{E}}
\newcommand{\eF}{\EuScript{F}}
\newcommand{\Ccat}{\mathcal{C}}
\newcommand{\CR}[2]{\Theta'_{#1,#2}}
\newcommand{\tw}[2]{\Xi_{#1,#2}}
\newcommand{\Ldot}[1]{\bullet_{\Lambda}}
\newcommand{\bR}[4]{\mathcal{\bar R}\left(\genfrac{}{}{0pt}{}{#1}{#2}, #3,#4\right)}
\newcommand{\Lotimes}{\overset{L}{\otimes}}
\newcommand{\strand}{\draw[thick]}
\newcommand{\redstr}{\draw[wei]}
\newcommand{\point}[1]{\filldraw[black] {#1} circle (3pt);}
\newcommand{\lab}{\node[anchor=north] at }
\newcommand{\casebrace}{\draw[thick] [decorate,
    decoration = {calligraphic brace, mirror, amplitude=5pt}]}
\newcommand{\Rg}[1]{\mathsf{R}(\boldsymbol{#1})}
\newcommand{\Cg}[1]{\mathsf{C}(\boldsymbol{#1})}
\newcommand{\PRz}{\chi_0}
\newcommand{\idm}[3]{\draw[thick] (#1,#2) node [below] {\tiny{$#3$}} -- (#1, #2+2);
}
\newcommand{\didm}[4]{\draw[thick] (#1,#2) node [below] {\tiny{$#4$}} -- (#1, #2+2) node [above] {\tiny{$#3$}};}
\newcommand{\rect}[4]{\draw[thick] (#1,#2) -- (#1+#3, #2); \draw[thick] (#1,#2)--(#1, #2+#4);
\draw[thick](#1+#3, #2) -- (#1+#3, #2+#4);
\draw[thick](#1, #2+#4)--(#1+#3,#2+#4);
}
\newcommand{\permd}[5]{\draw[thick] (#1, #2) node [below] {\tiny{$#4$}} -- (#1+#3, #2+2) node [above] {\tiny{$#5$}};}
\newcommand{\bigon}[4]{\draw[thick] (#1,#2) .. controls (#1+1,#2+1) .. (#1,#2+2);
\draw[thick] (#1+1,#2) .. controls (#1,#2+1) .. (#1+1,#2+2);
\node[anchor=north] at (#1,#2) {\tiny{$#3$}};
\node[anchor=north] at (#1+1,#2) {\tiny{$#4$}};}
\newcommand{\X}[4]{\draw[thick] (#1, #2) -- (#1+2, #2+2);
\draw[thick] (#1+2, #2) -- (#1, #2+2);
\node[anchor=north] at (#1, #2) {\tiny{$#3$}};
\node[anchor=north] at (#1+2, #2) {\tiny{$#4$}};
}
\newcommand{\Xnw}[4]{\draw[thick] (#1, #2) -- (#1+2, #2+2);
\draw[thick] (#1+2, #2) -- (#1, #2+2);
\node[anchor=north] at (#1, #2) {\tiny{$#3$}};
\node[anchor=north] at (#1+2, #2) {\tiny{$#4$}};
\filldraw[black] {(#1+0.5,#2+1.5)} circle (4pt);}
\newcommand{\Xne}[4]{\draw[thick] (#1, #2) -- (#1+2, #2+2);
\draw[thick] (#1+2, #2) -- (#1, #2+2);
\node[anchor=north] at (#1, #2) {\tiny{$#3$}};
\node[anchor=north] at (#1+2, #2) {\tiny{$#4$}};
\filldraw[black] {(#1+1.5,#2+1.5)} circle (4pt);}
\newcommand{\Xsw}[4]{\draw[thick] (#1, #2) -- (#1+2, #2+2);
\draw[thick] (#1+2, #2) -- (#1, #2+2);
\node[anchor=north] at (#1, #2) {\tiny{$#3$}};
\node[anchor=north] at (#1+2, #2) {\tiny{$#4$}};
\filldraw[black] {(#1+0.5,#2+0.5)} circle (4pt);}
\newcommand{\Xse}[4]{\draw[thick] (#1, #2) -- (#1+2, #2+2);
\draw[thick] (#1+2, #2) -- (#1, #2+2);
\node[anchor=north] at (#1, #2) {\tiny{$#3$}};
\node[anchor=north] at (#1+2, #2) {\tiny{$#4$}};
\filldraw[black] {(#1+1.5,#2+0.5)} circle (4pt);}
\newcommand{\nXse}[4]{\draw[thick] (#1, #2) -- (#1+1, #2+2);
\draw[thick] (#1+1, #2) -- (#1, #2+2);
\node[anchor=north] at (#1, #2) {\tiny{$#3$}};
\node[anchor=north] at (#1+1, #2) {\tiny{$#4$}};
\filldraw[black] {(#1+.75,#2+0.5)} circle (4pt);}
\newcommand{\lbr}[2]{\draw[thick] (#1,#2) .. controls (#1-0.25, #2+0.5) and (#1-0.25, #2+1.5).. (#1, #2+2) ;}
\newcommand{\rbr}[2]{\draw[thick] (#1,#2) .. controls (#1+0.25, #2+0.5) and (#1+0.25, #2+1.5).. (#1, #2+2) ;}
\newcommand{\de}[2]{\node at (#1,#2+1) {\small{$=$}};}
\newcommand{\dm}[2]{\node at (#1,#2+1.08) {\small{$-$}};}
\newcommand{\dcf}[3]{\node at (#1, #2+1) {\small{$#3$}};}
\newcommand{\dbcf}[3]{\node [anchor=north] at (#1, #2) {\tiny{$#3$}};}
\newcommand{\dtcf}[3]{\node [anchor=south] at (#1, #2+2) {\tiny{$#3$}};}
\newcommand{\lpull}[4]{\draw[thick] (#1,#2) node [anchor=north] {\tiny{$#4$}} .. controls (#1-#3-1.25,#2+1) .. (#1,#2+2);}
\newcommand{\rpull}[4]{\draw[thick] (#1,#2) node [anchor=north] {\tiny{$#4$}} .. controls (#1+#3+1.25,#2+1) .. (#1,#2+2);}
\newcommand{\red}[2]{\draw[wei](#1,#2) -- (#1, #2+2);}
\newcommand{\R}[3]{\mathcal{R}\left(#1,#2,#3\right)}
\newcommand{\Rk}[4]{\mathcal{R}^{#4}\left(#1,#2,#3\right)}
\newcommand{\Rz}[2]{\mathcal{R}_0\left(#1,#2\right)}
\newcommand{\ideal}[1]{\mathcal{I}(#1)}
\newcommand{\slehat}{\widehat{\mathfrak{sl}}_e}
\newcommand{\PR}{\chi}
 \newcommand{\bcomments}[1]{
}
        \newcommand{\dcomments}[1]{
}
    \newcommand{\sym}{\mathfrak{S}}
\newcommand{\resfn}{\bf{b}}
\newcommand{\affweyl}{\widehat{W}}
\newcommand{\Zdaff}{\mathsf{Z}_d^{\operatorname{aff}}}
\renewcommand*\FXLayoutInline[3]{{\@fxuseface{inline}
  \ignorespaces\noindent \ovalbox{\hspace{.01\textwidth} \begin{minipage}{.95\textwidth}
  	#3 \fxnotename{#1}: #2
  \end{minipage}\hspace{.01\textwidth}}}
  \newline}
\newcommand{\aA}{\mathsf{a}}
\tikzset{wei/.style={draw=red,double=red!40!white,double distance=1.5pt,thin}}
\title{Steadied Khovanov--Lauda--Rouquier algebras and local models for blocks}
\date{}
\author{Dinushi Munasinghe} 
\address[D.M.]{Department of Mathematics, National and Kapodistrian University of Athens, Panepistimioupolis, 157 84 Athens, Greece. }
\email{dinushi.munasinghe@mail.utoronto.ca}
\author{Ben Webster}
\address[B.W.]{Department of Pure Mathematics, University of Waterloo \& Perimeter Institute for Theoretical Physics,
Waterloo, ON, Canada}
\email{ben.webster@uwaterloo.ca}
\begin{document}
\begin{abstract}
	It's known that many different blocks of $\mathbb{F}_p\Sym_n$ for different values of $n$ are equivalent as categories, though the corresponding block algebras are almost never isomorphic.   Thus, it is a challenging problem to give one particularly nice representative of this Morita equivalence class of algebras. This has been accomplished for the case of RoCK blocks through work of Chuang--Kessar, Turner, and Evseev--Kleshchev.  In this paper, we give a new perspective on this problem, applying not just to RoCK blocks of $\Sym_n$, but also to {\it all} blocks of Ariki--Koike algebras. We do this by considering steadied quotients of KLRW algebras: these algebras are a natural generalization of cyclotomic quotients, already related to $\Sym_n$ and Ariki--Koike algebras in work of Brundan--Kleshchev. These algebras are defined by ``tilting'' the cyclotomic relations so that we kill the two-sided ideal defined by certain configurations on the left and right sides of our diagrams.  We show a Morita equivalence between these algebras and blocks of Ariki-Koike algebras generalizing the work discussed above.  
\end{abstract}

\maketitle

\section{Introduction}

A central principle in modular representation theory is that the structure of a block is controlled in a surprisingly strong manner by its defect group.  In the representation theory of symmetric groups, blocks correspond to the weight spaces for a categorical representation of the affine Lie algebra $\mathfrak{\widehat{sl}}_p$, so this principle becomes the understanding that many features of a block only depend on which orbit of the Weyl group this weight lies in.  
In particular, a landmark result of Chuang and Rouquier \cite{CR} shows that if blocks of $\mathbb{F}_p\sym_n$ for all $n\geq 0$ have the same defect group, they are derived equivalent, with the derived equivalence lifting the action of an element of the affine Weyl group.  Their proof only uses the structure of a categorical action, and thus applies in many similar situations, such as the representation theory of Ariki--Koike algebras, cyclotomic $q$-Schur algebras, categories $\mathcal{O}$ for Cherednik algebras, etc.  

However, the similarities between these blocks are hard to observe directly.  In particular, Chuang and Rouquier's derived equivalences are often (in fact, almost always in some sense) $t$-exact and thus arise from Morita equivalences; these are often called {\it Scopes equivalences}, since they were first discovered in the case of $\mathbb{F}_p\sym_n$ by Scopes \cite{scopesCartanMatrices1991}.  However, the corresponding algebras have wildly different dimensions--since the value of $n$ will grow quickly as we move away from the dominant Weyl chamber, many of these blocks are most naturally realized by very high-dimensional algebras.  This problem is most manifest in the cases of RoCK blocks, which all lie in a single Morita equivalence class containing ``most'' blocks in some sense.  

In the case of RoCK blocks of $\mathbb{F}_p\sym_n$, a conjecture of Turner \cite{Turner}, building on the work of Chuang and Kessar \cite{CK} for blocks of $p$-weight $<p$ and proven by Evseev--Kleshchev \cite{EK,EK2} in general, gives a single representative of the Morita equivalence class containing RoCK blocks, sometimes called a ``local model'' for these blocks.  It's not clear, however, how to generalize these results to arbitrary blocks of $\mathbb{F}_p\Sym_n$ and to Ariki--Koike algebras.

 In this paper, we'll give a systematic way of constructing ``local models'' for all blocks of Ariki--Koike algebras (including degenerate cases such as $\mathbb{F}_p\sym_n$).  That is, we'll give a description of algebras Morita equivalent to blocks of symmetric groups and Ariki--Koike algebras, which for blocks far from the dominant Weyl chamber have dramatically lower dimension.  
 
 This Morita equivalence is a generalization of Brundan and Kleshchev's isomorphism of blocks of symmetric groups and Ariki--Koike algebras $\operatorname{AK}_{m,n}(q,\mathbf{Q})$ to cyclotomic quotients of KLR algebras.  Let us quickly remind the reader of this equivalence; the reader can refer to \cite{BK} for additional details.  The Ariki--Koike algebra $\operatorname{AK}_{m,n}(q,\mathbf{Q})$ is the quotient of the affine Hecke algebra $\mathcal{H}^{\operatorname{aff}}_n(q)$ of rank $n$ by the relation $\prod_{i=1}^m(X_1-Q_i)=0$.  For simplicity of notation, we only discuss the case where $q\in \mu_e$ is a primitive $e$th root of unity and the parameters $\mathbf{Q}$ are $q$-connected.  In this case, the parameters $\mathbf{Q}$ of the Ariki--Koike algebra correspond to a dominant weight $\Lambda$ of $\mathfrak{\widehat{sl}}_e$ of level $m$ as in \cite[(1.2)]{BK}.  A block of an Ariki--Koike algebra corresponds to a residue function $\resfn \colon \mu_e\to \Z_{\geq 0}$, which records the number of boxes in a corresponding Young diagram with a given residue by \cite[Th. 2.11]{LM}.  We'll encode the residue function $\resfn$ in terms of a weight $\alpha=\sum_u \resfn (u)\alpha_u$ for $\mathfrak{\widehat{sl}}_e$; we denote the corresponding block algebra by $\operatorname{AK}^{\Lambda}_{\alpha}$. Given such a residue function, we can also define a corresponding cyclotomic quotient of a KLR algebra $\mathcal{R}_\alpha^\Lambda$ for $\mu_e$ considered as a Dynkin diagram with $u \to qu$ giving adjacencies. The dominant weight $\Lambda$ encodes the relations on the dots on the left-most strand, and $\resfn (u)$ gives the number of strands labelled by each element in $u \in \mu_e.$
 \begin{thm}[\cite{BK}]
 	We have an isomorphism $\operatorname{AK}^{\Lambda}_{\alpha}\cong R^{\Lambda}_{\alpha}$ from the block algebra of the Ariki--Koike algebra to the cyclotomic quotient of the KLR algebra.  
 \end{thm}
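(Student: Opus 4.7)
The plan is to mimic the strategy of constructing an explicit algebra isomorphism by defining the image of each KLR generator inside the Ariki--Koike block algebra, checking the defining relations, and closing the argument with a basis/dimension count. Throughout, the indexing by residue sequences and the Jucys--Murphy elements of $\mathcal{H}^{\operatorname{aff}}_n(q)$ will be the key technical tools.

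First I would produce in $\operatorname{AK}^{\Lambda}_{\alpha}$ a complete family of orthogonal idempotents $\{e(\Bi)\}$ indexed by residue sequences $\Bi = (i_1,\ldots,i_n) \in \mu_e^n$ whose total residue content equals $\alpha$. These are spectral projections with respect to the Jucys--Murphy elements $L_1,\ldots,L_n$ (suitably normalized from the $X_k$): since every $L_k$ acts on a finite-dimensional module with eigenvalues in $\mu_e$, one can realize $e(\Bi)$ as a polynomial in the $L_k$ extracting the generalized eigenspace with eigenvalues $(q^{i_1},\ldots,q^{i_n})$. The existence of these idempotents, and the fact that their sum is the identity of the block, requires the classification of central characters of $\mathcal{H}^{\operatorname{aff}}_n(q)$ together with the combinatorics of residue contents. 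Next I would set $y_k := \sum_{\Bi} (1 - q^{-i_k} L_k)\, e(\Bi)$, which is automatically nilpotent, and define $\psi_k$ as a renormalization of the Hecke intertwiner, schematically of the form $\psi_k e(\Bi) = (T_k + \text{correction})\, R_k(\Bi)\, e(\Bi)$ with $R_k(\Bi)$ a rational function of $L_k, L_{k+1}$ cooked up so that $\psi_k$ is well-defined even when $i_k = i_{k+1}$.

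With these images in hand, I would verify the KLR relations one by one: the dot-idempotent commutations and the quadratic relations for $\psi_k$ reduce to manipulations of the Hecke relation $(T_k - q)(T_k + 1) = 0$, while the cyclotomic relation $y_1^{\langle \Lambda, \alpha_{i_1}\rangle} e(\Bi) = 0$ drops out of the Ariki--Koike cyclotomic relation $\prod_{j=1}^m (X_1 - Q_j) = 0$ once the $Q_j$ are matched to the roots of unity encoded by $\Lambda$. I expect the main obstacle to be the braid relation for the $\psi_k$: its verification requires delicate identities among rational functions of the Jucys--Murphy elements, which is traditionally the thorniest step and is typically reduced to checking the equality on a faithful family of modules (for instance the polynomial representation of $\mathcal{H}^{\operatorname{aff}}_n(q)$).

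Finally, to promote this algebra homomorphism to an isomorphism I would compare graded dimensions. Surjectivity follows because $T_k$ and $X_k$ can be recovered from the $\psi_k$, $y_k$, and $e(\Bi)$ by inverting the defining formulas, while injectivity follows from matching the Khovanov--Lauda basis of $\mathcal{R}^{\Lambda}_{\alpha}$ against the standard Ariki--Koike basis of $\operatorname{AK}^{\Lambda}_{\alpha}$ indexed by standard multi-tableaux of residue content $\alpha$. The degenerate case $q = 1$ (symmetric groups in characteristic $p$) requires a parallel construction using additive rather than multiplicative Jucys--Murphy elements, but the same strategy applies without structural change.
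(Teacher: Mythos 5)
The paper does not supply its own proof of this statement; it imports it directly from Brundan--Kleshchev \cite{BK}, so there is no internal argument to compare against. Your sketch is a recognisable outline of the \cite{BK} strategy: spectral idempotents $e(\Bi)$ extracted from Jucys--Murphy elements, nilpotent dots $y_k$, corrected Hecke intertwiners for the $\psi_k$, verification of the KLR relations with the braid relation as the hardest step, and identification of the cyclotomic relation with the Ariki--Koike polynomial. All of that matches the cited reference in substance.

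The one genuine gap is your closing injectivity step. You propose matching ``the Khovanov--Lauda basis of $\mathcal{R}^\Lambda_\alpha$'' against the standard Ariki--Koike basis, but in the cyclotomic quotient the Khovanov--Lauda diagrammatic elements are only an a priori \emph{spanning set}; proving their linear independence in $R^\Lambda_\alpha$ is essentially equivalent to the theorem you are trying to prove, and the graded cellular basis of Hu--Mathas which would make the count precise is itself constructed using the Brundan--Kleshchev isomorphism, so appealing to it here is circular. What \cite{BK} actually do --- and what your own observation that $T_k$ and $X_k$ can be recovered from $\psi_k, y_k, e(\Bi)$ already points towards --- is construct the explicit inverse algebra homomorphism and check that the two maps are mutually inverse on generators. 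This replaces the dimension comparison entirely and avoids any need for an independent dimension bound on $R^\Lambda_\alpha$. If you recast your final paragraph as producing a two-sided inverse rather than surjectivity plus a dimension count, your outline agrees with the proof in \cite{BK}.
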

 In this paper, we'll show that there are a number of other algebras Morita equivalent to these same blocks, which are obtained by varying the relations of the cyclotomic quotient.  These are the {\bf steadied quotients} $\R{\PR}{\Lambda}{\alpha}$ where $\PR$ is a real-valued linear function on the root lattice.  These algebras are defined in \cite[Def. 2.22]{WebwKLR}, but we develop them much further here.  We recover the cyclotomic quotient if all simple roots have the same sign under $\PR$, but if some have positive and some have negative values, then we obtain a ``tilted'' version of the cyclotomic relations.  The result is always a quotient of a KLRW algebra $\Rz{\Lambda}{\alpha}$, with the kernel depending on which roots have negative values under $\PR$ and which have positive values.
 
 In order to state our main result, we use the standard action of the Weyl group $\affweyl$ of type ${A}^{(1)}_{e-1}$, the Coxeter group generated by reflections $s_0,\dots, s_{e-1}$ subject to the relations for the $A_{e-1}^{(1)}$ Cartan matrix. This has the usual action on the weight and root lattices, which induces an action on the pressure functions $\PR$ (this is discussed in more detail in \cref{sec:Weyl}).  Furthermore, we have an action on the residue functions, inducing the action on the root lattice, defined by $w\Ldot{\Lambda} \alpha=w(\alpha-\Lambda)+\Lambda$.  This is induced by matching $\alpha$ with the weight $\Lambda-\alpha$.

The main theorem of this paper is:
 \begin{ithm}\label{conj}
    The algebras $\R{w^{-1}\cdot\PR}{\Lambda}{\alpha}$ and $\R{ \PR}{\Lambda}{w\Ldot{\Lambda} \alpha}$ are Morita equivalent.
\end{ithm}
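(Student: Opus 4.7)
The plan is to reduce to the case of a simple reflection $w = s_i$ and then construct an explicit Morita-equivalence bimodule. By transitivity of Morita equivalence, combined with the fact that both $\PR \mapsto w^{-1}\cdot\PR$ and $\alpha \mapsto w \Ldot{\Lambda} \alpha$ are $\affweyl$-actions, any reduced expression $w = s_{i_k} \cdots s_{i_1}$ decomposes the desired equivalence into a chain of Morita equivalences, one for each simple reflection applied to the appropriate intermediate pair $(\PR', \alpha')$. So it is enough to treat $w = s_i$.

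For $w = s_i$, the difference $s_i \Ldot{\Lambda}\alpha - \alpha = \langle \Lambda - \alpha, \alpha_i^\vee \rangle \alpha_i$ is a multiple of $\alpha_i$, while $(s_i \cdot \PR)(\alpha_i) = -\PR(\alpha_i)$ flips the sign of the pressure on $\alpha_i$. I expect the Morita equivalence to be realized diagrammatically by a ``bending'' operation: the $r := |\langle \Lambda - \alpha, \alpha_i^\vee \rangle|$ extra $i$-strands appearing on one side can be added on the left or right of the diagram, with the side dictated by the sign of $\PR(\alpha_i)$. Concretely, one constructs a $(\R{\PR}{\Lambda}{s_i \Ldot{\Lambda}\alpha},\, \R{s_i\cdot \PR}{\Lambda}{\alpha})$-bimodule via a specific idempotent in an auxiliary KLRW algebra that combines both sets of strands, with $r$ extra $i$-strands placed on the appropriate side. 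This should be viewed as the categorification of a divided-power functor $\eE_i^{(r)}$ (or $\eF_i^{(r)}$) from the Chuang--Rouquier categorical $\mathfrak{sl}_2$-action on the tower of steadied quotients.

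The main obstacle is showing that this bimodule is a progenerator — equivalently, that the functor it defines is $t$-exact rather than merely a derived equivalence. In the untilted cyclotomic setting, $t$-exactness fails outside the Scopes regime; the key insight is that the pressure-tilted relations in the steadied quotient should kill precisely the configurations that obstruct $t$-exactness in the untilted case, so the Rickard/Chuang--Rouquier complex collapses to a single term. To make this rigorous, I would (i) use the basis theorem and standard module description of $\R{\PR}{\Lambda}{\alpha}$ developed in earlier sections and in \cite{WebwKLR}, (ii) match simples on both sides via the crystal induced by the categorical action, and (iii) verify diagrammatically that the composition of the bimodule with its dual acts as the identity on each steadied quotient, using a strand-bending manipulation together with the defining relations of the KLRW algebra and the steady condition. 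The last step is likely the most delicate, because it requires identifying exactly which loadings survive in $\R{\PR}{\Lambda}{\alpha}$ in terms of the sign pattern of $\PR$ on the simple roots.
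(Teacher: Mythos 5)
Your high-level reduction (break $w$ into a chain of reflections and do one reflection at a time) matches the paper, but there is a genuine gap at the single-reflection step, and the gap is precisely at the point you flag as ``likely the most delicate.''

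First, a minor mismatch: the chain should run over reflections $s_\beta$ for $\beta$ a $\PR$-\emph{simple} root, i.e.\ a root bounding the alcove in which the intermediate pressure lives, obtained from a gallery between the alcoves of $\PR$ and $w\cdot\PR$. These $\beta$ are generally not the standard simple roots $\alpha_i$. The categorical $\mathfrak{sl}_2$-action that the paper constructs on $\bigoplus_\alpha \R{\PR}{\Lambda}{\alpha}\mmod$ (Theorem \ref{sl2action}) is built for $\PR$-simple roots, precisely because \cref{lem:addable-roots} fails for an arbitrary simple root.

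The substantive gap is your claim that the steadied relations make ``the Rickard/Chuang--Rouquier complex collapse to a single term,'' so that the divided-power functor $\eF^{(r)}$ itself furnishes the progenerator bimodule. That is not what happens, and the paper does not prove anything of the sort. The Rickard complex $\CR{\alpha}{s_{\beta}\Ldot{\Lambda}\alpha}$ built from $\eF^{(k)} \to \eE\eF^{(k+1)} \to \cdots$ is genuinely a derived (not abelian) equivalence; it is shown to be \emph{perverse} for the filtration $\mathcal{A}_{k,i}=\Ccat_k(|k|+2i)$ with nontrivial perversity $p(i)=i$. It only collapses to $\eF^{(k)}$ when applied to objects with $\eE M = 0$, not in general. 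The key idea the paper uses, and which is entirely absent from your proposal, is a \emph{second} derived equivalence $\tw{s_{\beta}\PR}{\PR}$, given by derived tensor with the bimodule
\[
\bR{\PR}{s_{\beta}\PR}{\Lambda}{\alpha}=\R{\PR}{\Lambda}{\alpha}\Lotimes_{\mathcal{R}_0(\Lambda, \alpha)}  \R{s_{\beta} \PR}{\Lambda}{\alpha},
\]
which compares the two steadied quotients over the common KLRW algebra $\Rz{\Lambda}{\alpha}$. This functor is also a perverse equivalence (Theorem \ref{th:Xi-equivalence}), but with the \emph{opposite} perversity $p(i)=-i$, which is established using the $\bar\Delta$-flag and $\operatorname{Tor}$-vanishing results from Section \ref{sec:Standardizations} (\cref{lem:Tor-flag}, \cref{lem:shift}). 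Neither $\CR$ nor $\tw$ is $t$-exact on its own; it is the \emph{composition} $\CR{\alpha}{s_{\beta}\Ldot{\Lambda}\alpha}\circ \tw{s_{\beta}\PR}{\PR}$ that is perverse with perversity $p+p' \equiv 0$ and hence $t$-exact, by the perverse-equivalence calculus of Chuang--Rouquier. Without the second bimodule, there is no mechanism to cancel the nontrivial perversity of the Rickard complex, so the plan to show your single divided-power bimodule is already a progenerator would fail at step (iii).

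To salvage the approach you would need to supply (a) the alcove-based rather than Coxeter-generator-based reduction, and (b) the correcting bimodule $\bR{\PR}{s_{\beta}\PR}{\Lambda}{\alpha}$ and its perversity, which in turn rests on the standard-module homological algebra you reference only in passing via item (i).
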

The most natural setting to apply this is when $\Lambda-\alpha$ is dominant and $\PR$ is negative on all positive roots.  In this case $\R{ \PR}{\Lambda}{w\Ldot{\Lambda} \alpha}\cong R^{\Lambda}_{w\Ldot{\Lambda} \alpha}$  ranges over the Weyl group orbit of $\Lambda-\alpha$, and gives all the blocks of Arike--Koike algebras related by Chuang--Rouquier equivalences.  In particular, in the case where $\Lambda$ is a fundamental weight, if we take $\alpha=d\delta$ then $R^{\Lambda}_{w\Ldot{\Lambda} \alpha}$ will range over the blocks of $\mathbb{F}_p\sym_m$ for all $m$ which have $p$-weight $d$.  

On the other hand, if $\PR',\PR''$ are in the same Scopes chamber (see \cref{sec:Scopes} or \cite[\S 2.2]{rock} for the definition of Scopes chambers), then we have an isomorphism $ \R{ \PR'}{\Lambda}{\alpha}\cong\R{ \PR''}{\Lambda}{\alpha}$ induced by the quotient maps from $\Rz{\Lambda}{\alpha}$.  In particular, since there are finitely many Scopes chambers, only finitely many isomorphism types of algebras appear as $\R{w^{-1}\cdot\PR}{\Lambda}{\alpha}$ as $w$ varies over the affine Weyl group.  
 
We hope that the study of these algebras can shed light on the different blocks that appear in this way, though we leave the development of this thread to future work.  In particular, we know that these algebras must be cellular and have quasi-hereditary covers, since these properties are Morita invariant, but we do not construct these structures directly in this paper.  

In order to show \cref{conj}, we require three key structures:
\begin{itemize}
	\item The algebra $\Rz{\Lambda}{\alpha}$ carries a standard stratification depending on $\PR$ whose bottom stratum is the quotient $\R{\PR}{\Lambda}{\alpha}$.  The homological properties of standard modules and modules filtered by them are key to the calculations we need to do.
	\item The module categories $\oplus_{\alpha}\R{\PR}{\Lambda}{\alpha}\mmod $ carry categorical actions of $\mathfrak{sl}_2$, which are ``tilted'' versions of the usual categorical actions on $\oplus_{\alpha} R^{\Lambda}_{\alpha}$.  Rather than corresponding to simple roots, these functors correspond to the roots defining facets of the alcove containing $\PR$.  
	\item For any pair of pressures, we have a natural complex of bimodules:  \[\bR{\PR}{\PR'}{\Lambda}{\alpha}=\R{\PR}{\Lambda}{\alpha}\Lotimes_{\mathcal{R}_0(\Lambda, \alpha)}  \R{\PR'}{\Lambda}{\alpha}.\]  We will prove that the tensor product with this bimodule induces a derived equivalence (\cref{th:Xi-equivalence}).  
\end{itemize}
Finally, we prove \cref{conj} by comparing the Chuang--Rouquier equivalences induced by our tilted $\mathfrak{sl}_2$-actions with the equivalences induced by tensor product with $\bR{\PR}{s_{\beta}\PR}{\Lambda}{\alpha}$.  

We conclude by developing one connection that is of particular interest in modular representation theory: if $\chi$ lies in the dominant RoCK Scopes chamber (that is, $\chi(\al_i)\gg 0 \gg \chi(\al_0)$ for all simple roots $\al_i\neq \al_0$), then we show directly that $\R{\PR}{\Lambda}{d\delta}$ is Morita equivalent to the algebra $C_{\rho,d}$ defined in \cite[(5.17)]{EK2}, which is shown to be Morita equivalent in turn to the Turner double in \cite[Lem. 8.27]{EK2}.  Thus,  our results can be viewed as a generalization of the results of \cite{EK2} that covers all blocks and generalizes to arbitrary Ariki--Koike algebras.

It's worth noting that the original definition of the algebras $\R{\PR}{\Lambda}{\al}$ was inspired by the geometry of quiver varieties, with $\chi$ playing the role of a stability condition.  By \cite[Cor. 4.7 \& Th. 5.1]{WebcatO}, when $\K=\C$, the algebra $\R{\PR}{\Lambda}{\al}$ is the self-Ext algebra of a DQ-module on a quiver variety with dimension vectors determined by $\Lambda=\sum w_i\Lambda_i$ and $\al=\sum v_i\al_i$ and stability condition $\chi$.  In particular, this means that the Scopes walls for the $W$-orbit of $\Lambda-\al$ should coincide with the arrangement controlling variation of GIT for the quiver variety.  This was noted in the case corresponding to blocks of $\mathbb{F}_p\Sym_n$ in \cite[Lem. 4.3]{gordonQuiver} and is equivalent to \cite[Prop. 5.32]{WebcatO}.

By a result of Maffei \cite{Maffei}, the quiver variety for the data $w\cdot \PR,{\Lambda}$ and $w\Ldot{\Lambda} \alpha$ is isomorphic to that for $\PR,{\Lambda}$ and $\alpha$.   This isomorphism should induce the Morita equivalence of \cref{conj}.  This argument can't be used to prove any results in the case of positive characteristic (which is the most interesting case for our purposes), but is a key motivation for why we guessed that \cref{conj} would hold.

\section*{Acknowledgements}
The authors would like to thank Chris Bowman, 
Nicolas Jacon, Andrew Mathas, Robert Muth, Liron Speyer and Daniel Tubbenhauer for interesting conversations about this paper.

D.\ M. was supported by the Hellenic Foundation for Research and Innovation
(H.F.R.I.) under the Basic Research Financing (Horizontal support for all
Sciences), National Recovery and Resilience Plan (Greece 2.0), Project
Number: 15659, Project Acronym: SYMATRAL.  This research was supported by Mitacs through the Globalink Research Award program.

B.\ W. was supported by Discovery Grant RGPIN-2024-03760 from the
  Natural Sciences and Engineering Research Council of Canada.
This research was also supported by Perimeter Institute for Theoretical Physics. Research at Perimeter Institute is supported by the Government of Canada through the Department of Innovation, Science and Economic Development and by the Province of Ontario through the Ministry of Research and Innovation.

\section{Definition of steadied quotients}
\label{sec:Definition of steadied quotients}
\subsection{KLRW algebras}

We will be working with a class of quotients of weighted KLR (KLRW) algebras first introduced in \cite{WebwKLR}, but we will restrict ourselves to the case of a single red strand and the trivial weighting. We leave consideration of the more general cases to future work.

As usual, we fix a set $I$ and a Cartan datum on this set.  
That is, the free abelian group
generated by the simple roots $\al_i$ for $i\in I$ carries a symmetric bilinear form $\langle
-,-\rangle$ such that $\langle \al_i,\al_i\rangle \in 2\Z_{>0}$, and $C=\left(c_{ij}=2\frac{\langle\al_i,\al_j
  \rangle}{\langle\al_i,\al_i \rangle}\right)$ is a symmetrizable generalized Cartan matrix.  We'll assume for simplicity that $I$ is finite; see \cite[\S 2]{Webunfurl} for more detail on how to generalize this to the infinite case.  
  
A realization of this Cartan matrix over $\mathbb{C}$ is 
\begin{enumerate}
  \item An
$\mathbb{C}$-vector space $\mathfrak{h}$.
\item Elements $\al_i^{\vee}\in
\mathfrak{h}$ 
\item Elements $\al_i\in \mathfrak{h}^*$ for $i\in I$ such that $\al_i^{\vee}(\al_j)=c_{ij}$.  \end{enumerate}
As usual, we can define a Kac--Moody Lie algebra $\mathfrak{g}$ with Cartan $\mathfrak{h}$ generated by formal symbols $E_i$ and $F_i$ satisfying $[E_i,F_i]=\al_i^{\vee}$ and the Serre relations for the Cartan matrix.

For the remainder of the paper, we assume that this Cartan datum is of finite or affine type.
\begin{enumerate}
    \item If the datum is finite type, we will use the unique realization by the Cartan subalgebra of the corresponding simple Lie algebra.
    \item If the datum is affine type, we will use the unique realization where the simple roots and coroots are both linearly independent sets and both span subspaces of codimension one in $\mathfrak{h}^*$ and $\mathfrak{h}$.
\end{enumerate} 
We'll focus primarily on the case where the Cartan matrix is of type $A^{(1)}_{e-1}$, which corresponds to the affine Lie algebra $\slehat$.  In this case, we can identify $I=\Z/e\Z$ with the quiver structure that adds an arrow $i\to i+1$ for all $i\in \Z/e\Z$.  That is, $\al_i^{\vee}(\al_j)=2\delta_{i,j}-\delta_{i,j+1}-\delta_{i,j-1}$.  In the context of Ariki--Koike algebras for a parameter $q\in \K$ of multiplicative order $e$, we identify $I$ with the roots of unity $\mu_e$ via the map $i\mapsto q^i$.  See \cite[\S 2.1]{rock} for a more detailed discussion of the combinatorics of this case.  The results of \crefrange{sec:Definition of steadied quotients}{sec:The categorical action} hold in all finite and affine cases, but there are some details that we will only discuss in the case of $\slehat$.

The diagrammatic algebras that we will study in this paper are made up of linear combinations of KLRW diagrams \cite[Def. 4.1]{knots}, with relations depending on the Cartan datum and certain polynomials $Q_{ij}(u,v)$ for $i\neq j\in I$. As usual, for $A^{(1)}_{e-1}$ we choose the polynomials 
\[Q_{ij}(u,v)= \begin{cases}
    u-v & i=j+1\neq j-1\\
    v-u& i=j-1\neq j+1\\   
    -(v-u)^2& i=j-1= j+1\\
    1 & \text{otherwise}
\end{cases}\]
\begin{defn}
	A KLRW diagram is a collection of strands that connect the lines $y=0$ and $y=1$ in $\mathbb{R}^2$. One strand is a straight red line connecting $(0,0)$ and $(0,1)$ and the other strands are black; strands are allowed to cross but cannot have triple points or tangencies.
Each black strand carries a label by some $i \in I$ and can be adorned with dots. We identify diagrams that differ by isotopies that preserve these genericity conditions.
\end{defn}

Let $b_i$ denote the number of strands with the label $i$ and let $\alpha= \sum_i b_i \alpha_i$.  Let $\K$ be a commutative ring.
\begin{defn}
    We define the algebra $\mathcal{R}_0(\Lambda, \alpha)$ as the algebra made up of $\K$-linear combinations of the KLRW diagrams with $b_i$ strands labelled $i$ modulo the KLRW local relations \cite[(2.6,4.1-2)]{knots}.  When $I=\Z/e\Z$ with the $A^{(1)}_{e-1}$ Cartan datum, these have the form:
\begin{equation}\label{dotcross}
\begin{tikzpicture}[scale=0.2,baseline=10]
\strand (0,0) -- (4,4);
\node at (0,0) [anchor=north] {$i$};
\node at (4,0) [anchor=north] {$j$};
\node at (7,0) [anchor=north] {$i$};
\node at (11,0) [anchor=north] {$j$};
\node at (14,0) [anchor=north] {$i$};
\node at (18,0) [anchor=north] {$j$};
\node at (21,0) [anchor=north] {$i$};
\node at (25,0) [anchor=north] {$j$};
\strand (0,4) -- (4,0);
\filldraw[black] {(1,3)} circle (8pt);
\lab (5.5,3.25) {$-$};
\strand (7,0) -- (11,4);
\strand (7,4) -- (11,0);
\filldraw[black] {(10,1)} circle (8pt);
\lab (12, 3) {$=$};
\strand (14,0) -- (18,4);
\strand (14,4) -- (18,0);
\filldraw[black] {(15,1)} circle (8pt);
\lab (19.5,3.25) {$-$};
\strand (21,0) -- (25,4);
\strand (21,4) -- (25,0);
\filldraw[black] {(24,3)} circle (8pt);
\lab (26.5,3.25) {$=$};
\casebrace (29,6) -- (29,-2);
\lab (30,5) {$0$};
\lab (35,5) {$i \neq j$};
\strand (30,-1) -- (30,1);
\strand (31,-1) -- (31,1);
\lab (35,1.5) {$i = j$};
\end{tikzpicture}
\end{equation}

\begin{equation}\label{bigon}
\begin{tikzpicture}[scale=0.5,baseline=-30pt]
\bigon{3}{-3}{i}{j}
\lab (5,-1.5) {$=$};
\casebrace (6.5,5) -- (6.5,-9);
\idm{8}{1}{i}
\idm{9}{1}{j}
\idm{8}{-2}{i}
\idm{9}{-2}{j}
\lab (8,5) {$0$};
\lab (19.5,5) {$i = j$};
\lab (20.5,2.5) {$i\neq j, j\pm 1$};
\lab (10.5,-0.5) {$-$};
\idm{12}{-2}{i}
\idm{13}{-2}{j}
\point{(8,-1)};
\point{(13,-1)};
\lab (20,-0.5) {$i=j+1$};
\idm{8}{-5}{i}
\idm{9}{-5}{j}
\lab (10.5,-3.25) {$-$};
\idm{12}{-5}{i}
\idm{13}{-5}{j}
\point{(9,-4)};
\point{(12,-4)};
\lab (20,-3.5) {$i=j-1$};
\lab (7,-6.5) {$-$};
\idm{8}{-8}{i}
\idm{9}{-8}{j}
\lab (10.5,-6.5) {\small{$+2$}};
\idm{12}{-8}{i}
\idm{13}{-8}{j}
\lab (14,-6.5) {$-$};
\idm{15}{-8}{i}
\idm{16}{-8}{j}
\point{(8,-6.75)};
\point{(8,-7.25)};
\point{(12,-7)};
\point{(13,-7)};
\point{(16,-6.75)};
\point{(16,-7.25)};
\lab (21.5, -6.5) {$i = j+1 = j-1$};
\end{tikzpicture}
\end{equation}
\begin{equation}
    \label{triple}
    \begin{tikzpicture}[scale=0.5,baseline=10pt]
      \permd{0}{0}{2}{i}{}
      \permd{2}{0}{-2}{i}{}
      \lpull{1}{0}{0}{j} \dcf{2.5}{0}{=}  \rpull{5.5}{0}{0}{j}  \permd{4.5}{0}{2}{i}{}
      \permd{6.5}{0}{-2}{i}{} \dcf{7.5}{0}{+ h} \lbr{9}{0} \idm{10}{0} {i} \point{(10, 1)} \idm{11}{0}{j} \idm{12}{0}{i} \dbcf{13}{0}{,} \idm{14}{0}{i} \idm{15}{0}{j}
      \point{(15, 1)}
      \idm{16}{0}{i} \dbcf{17}{0}{,} \idm{18}{0}{i} \idm{19}{0}{j} \idm{20}{0}{i}
      \point{(20, 1)}
      \rbr{21}{0} 
    \end{tikzpicture}
\end{equation}
where:
\begin{equation}
    h(u,v,w)=\begin{cases}
        1 & i=j+1\neq j-1\\
        -1 & i=j-1\neq j+1\\
        -u +2v -w & i=j+ 1=j-1\\
                0 & \text{ otherwise}
    \end{cases}
\end{equation}
and for any other choice of labels, the right hand side is zero.   
There is one red strand which is fixed in place. A black strand labelled $i$ (an ``$i$-strand") requires $\langle \Lambda, \alpha_i\rangle$ dots to cross this red strand. That is, we have the local relations:

\begin{equation}\label{cyclotomic}
\begin{tikzpicture}[scale=0.5]
\redstr (0,0) -- (0,4);
\strand (1,0) -- (1,4);
\point{(1,2)};
\node[anchor=south west] at (1,2) {$\langle \Lambda, \alpha_i\rangle$};
\lab (1,0) {$i$};
\node at (4,2) {$=$};
\redstr (5,0) -- (5,4);
\strand (6,0) .. controls (4,2) .. (6,4);
\lab (6,0) {$i$};   
\end{tikzpicture}\qquad 
\begin{tikzpicture}[scale=0.5]
\redstr (1,0) -- (1,4);
\strand (0,0) -- (0,4);
\point{(0,2)};
\node[anchor=south east] at (0,2) {$\langle \Lambda, \alpha_i\rangle$};
\lab (0,0) {$i$};
\node at (2,2) {$=$};
\redstr (4,0) -- (4,4);
\strand (3,0) .. controls (5,2) .. (3,4);
\lab (3,0) {$i$};   
\end{tikzpicture}
\end{equation}

\begin{equation}\label{redcrossing}
\begin{tikzpicture}[scale=0.7]
\red{1}{0} \dm{2}{0} \red{3}{0} \draw[thick] (-0.5,0) -- (1.5,2);
\draw[thick] (1.5,0) -- (-0.5,2);
\draw[thick] (2.5, 0) -- (4.5,2);
\draw[thick] (2.5,2) -- (4.5,0);
\node at (-0.5,0) [anchor=north] {\tiny{$i$}};
\node at (1.5,0) [anchor=north] {\tiny{$i$}};
\node at (2.5,0) [anchor=north] {\tiny{$i$}};
\node at (4.5,0) [anchor=north] {\tiny{$i$}};
\node at (1,0) [anchor=north] {\tiny{$\Lambda$}};
\node at (3,0) [anchor=north] {\tiny{$\Lambda$}};
\de{5}{0}
\node at (7,1) {$\sum \limits_{\substack{a+b\\=\langle \Lambda , \alpha_i^\vee \rangle - 1}}$};
\idm{8.5}{0}{i} \red{9.5}{0} \idm{10.5}{0}{i}
\point{(8.5,1)}
\point{(10.5,1)}
\node at (8.5,1) [anchor = east] {$a$};
\node at (10.5,1) [anchor = west] {$b$};
\end{tikzpicture}
\end{equation}

\end{defn}

For a given horizontal slice of the diagram, there is a corresponding idempotent in the algebra.  We'll usually let $\bell=(\ell_1,\dots, \ell_{k_\ell})$ be the labels on the black strands to the left of the red strand, read left-to-right, and $\Br=(r_1,\dotsc, r_{k_r})$ the labels on the black strands to the right of the red, read {\it right-to-left}; that is, in both cases, we read from outside in toward the red strand.  
We denote this idempotent $e(\bell,\Br)$.
\begin{align}\label{idemp}
e(\bell,\Br)=\begin{tikzpicture}[scale=0.5,baseline=8pt]
    \strand (-1,0) -- (-1,2);
    \strand (-3,0) -- (-3,2);
    \strand (1,0) -- (1,2);
    \strand (3,0) -- (3,2);
    \redstr(0,0) -- (0,2);
    \lab (-3,0) {$\ell_1$};
    \lab (-1,0) {$\ell_{k_\ell}$};
    \lab (3,0) {$r_1$};
    \lab (1,0) {$r_{k_r}$};
    \lab (-2,1) {\dots};
    \lab (2,1) {\dots};
\end{tikzpicture}   
\end{align}
For each such idempotent, we can consider the sums \[\alpha_{\bf{\ell}}=\sum_{m=1}^{k_\ell}\alpha_{\ell_m} \qquad \alpha_{\bf{r}}=\sum_{m=1}^{k_r}\alpha_{r_m} \]

\begin{defn}\label{def:KLR}
A {\bf KLR diagram} is a diagram with only black strands that satisfies the same local constraints as a KLRW diagram.  The {\bf KLR algebra} $R_{\alpha}$ is the quotient of the formal $\K$-linear combinations of KLR diagrams where labels on strands sum to $\alpha$ by the relations \crefrange{dotcross}{triple}.
    
    In this case, we will write $e(\Bi)$ for the idempotent where the labels on strands, read left-to-right, are $(i_1,\dots, i_n)$.  Note that $R_{\alpha}$ is Morita equivalent to $\Rz{0}{\alpha}$ via the inclusion adding a red strand to the right of the diagram, sending $e(\Bi)\mapsto e(\Bi,\emptyset)$. 
\end{defn}
\subsection{Pressures}

To each simple root $\alpha_i$, we assign a \textbf{pressure}
$\PR(\alpha_i) \in \mathbb{R}$, which we extend to a function on the
root lattice of $\widehat{\mathfrak{sl}}_e$ by linearity.  For the
sake of normalization, we require $\chi(\delta) =-1.$ Since we will often want to consider the evaluation of this function on the labels of some group of strands, we let $\alpha_{(i_1,\dots, i_r)}=\al_{i_1}+\cdots +\al_{i_r}$.

\begin{defn}
    An element $\alpha$ of the root lattice is said to have \textbf{positive pressure} if $\PR(\alpha)>0$ and \textbf{negative pressure} if $\PR(\alpha)<0$.
\end{defn}
This name comes from a physical metaphor for how we will sometimes move strands.  
The pressure of a root can be visualized as the force 
on a strand with that label: $\PR(\alpha_i) >0$ means that an $i$-strand will have a tendency to move toward the right (the positive direction), while $\PR(\alpha_i)<0$ indicates that a strand will tend to move to the left. It can be useful to think of strands as colliding and getting stuck together, with the labels $i_1, \dotsc , i_k$ appearing being captured by the sum $\beta=\alpha_{i_1} + \dots + \alpha_{i_k}$ of the corresponding simple roots, so $\PR(\beta)$ encodes the total force on this group of strands (see \cref{rem:pressure}).
Consider the height function $h\colon \rola\to \Z$ defined on simple roots as $h(\al_i)=1$ for all $i$ and extended linearly.
If we imagine each strand as a particle with unit mass, then  $h(\beta)$ is the total mass of this group. 
Thus, in this case, the quotient $\PR(\beta)/h(\beta)$ captures the acceleration of this group of strands by its usual proportionality with force.

\begin{defn}
    The {\bf steadied quotient} $\R{\chi}{\Lambda}{\alpha}$ is the quotient of $\mathcal{R}_0(\Lambda, \alpha)$ by the two-sided ideal $\ideal{\PR}$ generated by the idempotents of the following form:
\begin{align}\label{steadiedidemp}
\begin{tikzpicture}[scale=0.5,baseline=6pt]
    \strand (-2,0) -- (-2,2);
    \strand (-4,0) -- (-4,2);
    \strand (2,0) -- (2,2);
    \strand (4,0) -- (4,2);
    \redstr(0,0) -- (0,2);
    \lab (-4,0) {$\ell_1$};
    \lab (-2,0) {$\ell_i$};
    \lab (4,0) {$r_1$};
    \lab (2,0) {$r_j$};
    \lab (-1,1) {\dots};
    \lab (1,1) {\dots};
    \lab (-3,1) {\dots};
    \lab (3,1) {\dots};
    \node at (5,1) {$=0$};
\end{tikzpicture}   
\end{align}
whenever $\PR(\alpha_{(\ell_1, \dots, \ell_i)})<0$ for some $i$ or $\PR(\alpha_{(r_1, \dots, r_j)})>0$ for some $j$. That is, idempotents become zero when strands are able to ``escape'' to infinity in the direction of their momentum. Note that we only require a subset of the strands, counting from the outside-in, to escape.  
\bcomments{I didn't see how to clearly phrase the commented out part.}
\end{defn}
In particular, if $e(\bell,\Br)$ is non-zero in this quotient,
$\alpha_{\Br}$ must have negative pressure, and
$\alpha_{\bell}$ must have positive pressure.

In the physical metaphor discussed above, if we have $\PR(\alpha_{\ell_1} + \dots + \alpha_{\ell_i})<0$, this means that the center of mass of this group of strands is accelerating leftward and some group of them will escape off to $-\infty$.  Similarly, if $\PR(\alpha_{r_1} + \dots +\alpha_{r_j})>0$, then the center of mass of this group of strands is accelerating rightward, and some group of them will escape off to $\infty$.  Thus, if $e(\bell,\Br)$ is non-zero in this quotient, we must have that in our physical model, no group of strands is pushed to $\pm \infty$.  

In the case when $\PR(\alpha_i) <0$ for all $i \in I$, we recover the algebra denoted $T^{\lambda}$ in \cite{knots}.  This is the same as
the usual cyclotomic KLR algebra by \cite[Thm 4.18]{knots}, which shows that the usual cyclotomic ideal is induced by the relations
\begin{center}
 \begin{tikzpicture}[scale=0.5]
    \strand (0,0) -- (0,2);
    \redstr(1,0) -- (1,2);
    \lab (0,0) {$i$};
    \node at (2,0.5) {$\dots$};
    \node at (3.5,1) {$=0$};
\end{tikzpicture}      
\end{center}
That is, $e((i), \Br)=0$ for any $i \in I$.  As discussed in the introduction, the main theorem of \cite{BK} can identify this with a block of an Ariki-Koike algebra (if $\K$ is a field of characteristic coprime to $e$), or a cyclotomic degenerate affine Hecke algebra (if $e$ is the characteristic of $\K$).

Given a fixed $e>0$, we will often work with the pressure $\PRz(\alpha_i) = -\frac{1}{e}$ for all $i \in I$. 

\begin{example}\label{ex:2a02a1}
    The algebra $\R{\PR}{\Lambda_0}{2\alpha_0+2\alpha_1}$, where $\PR(\alpha_0), \PR(\alpha_1) <0$ and $e=2$. For normalization of $\chi(\delta) = -1$ we take $\chi(\alpha_i) = -\frac12.$ 

We have a total of two black $0$-strands, two black $1$-strands, and one red strand, giving us a total of $30$ possible straight line idempotents. The possible strand labels read left-to-right are
\[ 0011 , \quad 0101, \quad 0110, \quad 1001, \quad 1010, \quad 1100\]
each giving five possible idempotents based on the placement of the red strand.

Idempotents with a black strand on the left of the red strand are zero, so the nonzero idempotents only have black strands on the right. Since a $1$-strand directly to the right of the red strand can be pulled freely across to the left, and since any $0$-strand directly to the right of the red strand with at least one dot can also be pulled across to the left, any such idempotents are also zero. Furthermore, by squaring the idempotent:
\begin{center}
    \begin{tikzpicture}[scale=0.5]
    \lbr{-1}{0}
        \red{0}{0}
        \idm{1}{0}{0}
        \idm{2}{0}{0}
        \rbr{3}{0}{0}
        \node at (3.7,2) {\tiny{$2$}};
        \node at (4,1) {\tiny{$=$}}; 
        \lbr{5}{0} \red{6}{0} 
        \Xsw{7}{0}{0}{0} \node at (10,1) {\tiny{$-$}};
        \red{11}{0} \Xne{12}{0}{0}{0}{0}
        \rbr{15}{0}
         \lbr{16}{0} \red{17}{0} 
        \Xnw{18}{0}{0}{0} \node at (21,1) {\tiny{$-$}};
        \red{22}{0} \Xse{23}{0}{0}{0}{0}
        \rbr{26}{0}
        \node at (4,-3) {\tiny{$=$}};
        
        \red{5}{-4}
        \bigon{6}{-4}{0}{0} 
        \point{(6.3,1.2-4)}
        \point{(6.3, 0.8-4)}
         \node at (8, -3) {\tiny{$-$}};
         \red{9}{-4}
         \bigon{10}{-4}{0}{0}
         \point{(10.25,-3)}
         \point{(10.75,-2.25)}
         \node at (12, -3) {\tiny{$-$}};
         \red{13}{-4}
         \bigon{14}{-4}{0}{0}
         \point{(14.25,-3)}
         \point{(14.75, -3.75)}
         \node at (16, -3) {\tiny{$+$}};
         \red{17}{-4}
         \bigon{18}{-4}{0}{0}
         \point{(18.75, -2.25)}
         \point{(18.75,-3.75)}
         \node at (4,-7) {\tiny{$=$}};
         \node at (5, -7) {$0$};
    \end{tikzpicture}
    \end{center}
    The computation of squaring the idempotent above will prove generally useful. Note that without taking the quotient, we still have:

\begin{lem}\label{lem:id-sq}
For any $e$, if $i$ is such that $\langle \Lambda, \alpha_i\rangle = 1:$
\begin{center}
\begin{tikzpicture}[scale=0.5]
     \lbr{-1}{0}
        \red{0}{0}
        \idm{1}{0}{i}
        \idm{2}{0}{i}
        \rbr{3}{0}{i}
        \node at (3.7,2) {\tiny{$2$}};
        \node at (4,1) {\tiny{$=$}};
    \red{5}{0}
        \bigon{6}{0}{i}{i} 
        \point{(6.3,1.2)}
        \point{(6.3, 0.8)}
         \node at (8, 1) {\tiny{$-$}};
         \red{9}{0}
         \bigon{10}{0}{i}{i}
         \point{(10.25,1)}
         \point{(10.75,0.25)}
         \node at (12, 1) {\tiny{$-$}};
         \red{13}{0}
         \bigon{14}{0}{i}{i}
         \point{(14.25,1)}
         \point{(14.75, 1.75)}
\node at (4,-3) {\tiny{$=$}};
\red{5}{-4} \rpull{6}{-4}{0.2}{i} \lpull{7}{-4}{2}{i}
\point{(5.5,-3.5)}
\node at (8.5, -3) {\tiny{$-$} };
\red{10}{-4} \rpull{11}{-4}{0.2}{i} \lpull{12}{-4}{2}{i}
  \point{(11.125,-3.9)}
\node at (13, -3) {\tiny{$-$} };
\red{15}{-4} \rpull{16}{-4}{0.2}{i} \lpull{17}{-4}{2}{i}
\point{(16.75, -2.075)}
\end{tikzpicture}
\end{center}   
\end{lem}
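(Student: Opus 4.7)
The plan is a direct diagrammatic computation in the KLRW algebra using the local relations. The hypothesis $\langle \Lambda, \alpha_i\rangle = 1$ is essential because it makes the cyclotomic relation \eqref{cyclotomic} take its simplest form: a single dot on an $i$-strand adjacent to the red strand is equivalent to the strand curving around the red strand, with no sum of intermediate dot configurations appearing.

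For the first equality, I would stack the two copies of the idempotent diagram on the LHS, so that the curves framing the red strand combine in the interior of the stack. Applying the cyclotomic relation in the forward direction converts the interior curves into single dots on the adjacent $i$-strands, and then the same-label crossings that appear can be resolved via the bigon relation \eqref{bigon} for $i = j$ together with the dot-sliding relation \eqref{dotcross}. Since $i = j$, sliding a dot past a crossing produces the crossing back plus an identity term on two $i$-strands, and it is precisely these identity terms that collect into the three bigon diagrams in the middle of the lemma. The signs $+,-,-$ arise from keeping track of the sign reversals introduced by each application of the dot-sliding relation together with the choice of which dot is pushed in which direction.

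For the second equality, I would apply the cyclotomic relation in reverse to each of the three bigon-with-dots terms in the middle. In each summand, one of the dots sits on an $i$-strand adjacent to the red strand; by the $\langle \Lambda,\alpha_i\rangle = 1$ form of \eqref{cyclotomic}, that dot can be exchanged for a curve of the strand around the red strand. Trading the distinguished dot in each term produces exactly the three pull diagrams on the RHS, with the signs preserved from the middle.

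The main obstacle is the careful bookkeeping of signs, dot placements, and the order in which the relations are applied. Each of the three middle terms corresponds to a different choice of which strand one slides the dot along and in which direction, so matching these choices with the three specific dot patterns (and the signs $+,-,-$) shown in the statement requires tracking the computation step by step. No genuinely new diagrammatic phenomenon appears beyond those encoded in \eqref{dotcross}, \eqref{bigon}, and \eqref{cyclotomic}; the entire content of the lemma is that these relations combine, under the hypothesis $\langle \Lambda,\alpha_i\rangle = 1$, to give exactly this three-term expansion.
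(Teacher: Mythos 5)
Your ingredient list is right (the dot-cross relation \eqref{dotcross}, the bigon relation \eqref{bigon}, and the cyclotomic relation \eqref{cyclotomic}), and the description of the second equality --- trading one dot for a left pull of the rightmost $i$-strand around the red via \eqref{cyclotomic} --- matches what the paper does. But the mechanism you give for the first equality is off. You say to ``stack two copies of the idempotent so that the curves framing the red strand combine,'' then apply the cyclotomic relation forward to turn interior curls into dots. There are no such curls: the idempotent whose square appears is just a red strand with two straight $i$-strands to its right (the lbr/rbr arcs in the figure are drawn parentheses for the $(\ \cdot\ )^2$ notation, not strands crossing the red). Stacking two copies of it yields only straight strands and nothing to apply \eqref{cyclotomic} to.

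The computation the paper has in mind --- it is carried out explicitly in the $e=2$ display immediately preceding the lemma --- runs the other way around. You first use the $i=j$ case of \eqref{dotcross}, which rewrites the identity on two $i$-strands as a difference of single crossings bearing one dot, in two ways: $e = [\text{red}|X_{\mathrm{sw}}] - [\text{red}|X_{\mathrm{ne}}] = [\text{red}|X_{\mathrm{nw}}] - [\text{red}|X_{\mathrm{se}}]$. Expanding $e^{2}=e\cdot e$ as the product of these two expressions stacks pairs of crossings into bigons, producing four bigon-with-two-dots terms; one of these drops (a same-label bigon with both dots outside the lens), leaving the three middle terms with signs $+,-,-$. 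Only after that do you invoke \eqref{cyclotomic} once per term, which is your second step. So the dot-cross relation creates the crossings that become bigons, rather than the cyclotomic relation resolving pre-existing curls --- you have the order of relations reversed and have misread where the curls come from.
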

Thus, any idempotents with labels $0011, 1001, 1010, 1100$ are zero, and the nonzero straight-line idempotents are:
    \begin{center}
    \begin{tikzpicture}[scale=0.5, baseline=10]
\red{0}{0} \idm{1}{0}{0} \idm{2}{0}{1} \idm{3}{0}{0} \idm{4}{0}{1}
    \end{tikzpicture}
   \quad and \qquad 
        \begin{tikzpicture}[scale=0.5, baseline=10]
\red{0}{0} \idm{1}{0}{0} \idm{2}{0}{1} \idm{3}{0}{1} \idm{4}{0}{0} 
    \end{tikzpicture}        
    \end{center}
corresponding to residue sequences of the following tableaux:
\[\begin{ytableau}
    1 & 2 & 3 & 4
\end{ytableau} \quad \text{and} \;\qquad \begin{ytableau}
    1 & 2 &4 \\ 3
\end{ytableau}\]
This recovers the cyclotomic KLR algebra $R^{\Lambda}_{\alpha}.$
\end{example}

The varying pressures are thus a skewed version of the cyclotomic quotient relation, which is just one extreme case; RoCK blocks correspond to cases at the opposite extreme, where the values of $\PR(\beta)$ for all real roots are large compared to $\PR(\delta)=-1$.  More precisely:
\begin{defn}
    We call a pressure {\bf RoCK} for a given choice of $\alpha$ if whenever $\beta+m\delta\leq \alpha$ for some $m\in \Z_{\geq 0}$, the signs of $\PR(\beta)$ and $\PR(\beta+m\delta)$ are the same.  
\end{defn}

\Cref{conj} shows that $\R{\PR}{\Lambda}{\alpha}$ with $\Lambda-\alpha$ dominant and $\PR$ RoCK is Morita equivalent to a RoCK block of the corresponding Ariki--Koike algebra.  We'll expand on this in more detail in \cref{sec:RoCK}.

\subsection{Dependence on alcoves}

Given an element of the root lattice $\beta$, we've already discussed that the quotient $\PR(\beta)/h(\beta)$ will be the acceleration of a group of strands whose labels sum to $\beta$.  These slopes also define a convex order on the set of positive roots by the rule \[ \beta_1 \preceq \beta_2 \text{ if and only if } \frac{\chi(\beta_1)}{h(\beta_1)}\leq \frac{\chi(\beta_2)}{h(\beta_2)}.\]
There is a beautiful theory of cuspidal modules over the KLR algebra $R_{\beta}$ with respect to this order, which many of our constructions will generalize.  See \cite[\S 2]{tingleyMirkovicVilonenPolytopes2016} for the construction of cuspidal modules with respect to this type of order.   In particular, we will often want to consider the semi-cuspidal quotient of $R_{\beta}$.

\begin{defn}
The semi-cuspidal quotient $C_{\beta}$ is the quotient of $R_{\beta}$ by the ideal generated by concatenations $e(\Bi\Bj)$ where $\alpha_{\Bi}\preceq \alpha_{\Bj} $.  
\end{defn}
If $\beta$ is not a multiple of a root, this quotient is trivial.
By \cite[Th. 3.3]{brundanHomologicalProperties2014}, if $\beta$ is a real root, then $C_{\beta}$ is Morita equivalent to $\K[x]$ by its action on the standard module $\Delta(\beta)$.  On the other hand, if $\beta=d\delta$, this quotient is much more complicated; its structure will be relevant in \cref{sec:RoCK}.

\begin{lem}\label{lem:only-roots}
   The ideal $\ideal{\chi}$ is generated by elements of the form \eqref{steadiedidemp} in the case where $\beta=\alpha_{(\ell_1,\dots, \ell_i)}$ is a root that satisfies $\PR(\beta)<0$ or $\beta=\alpha_{(r_1,\dots, r_j)}$ is a root that satisfies $\PR(\beta)>0$. 
   That is, the ideal $\ideal{\PR}$ only depends on the Weyl chamber in which $\PR$ lives. 
\end{lem}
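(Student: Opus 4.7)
The strategy is to prove the non-trivial inclusion $\ideal{\PR}\subseteq J$, where $J$ denotes the ideal generated by the root-type idempotents described in the statement. The reverse inclusion is immediate, and the ``Weyl chamber'' conclusion then follows at once: the defining condition for a root-type generator (``$\beta$ is a positive root with $\PR(\beta)$ of specified sign'') depends only on the signs of $\PR$ on the positive roots, i.e., on the Weyl chamber containing $\PR$, so two pressures in the same chamber determine the same set of root-type generators and hence the same ideal.

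The first step is a purely linear-algebraic reduction. Given a generator $e(\bell,\Br)$ of $\ideal{\PR}$ with $\PR(\beta)<0$ for $\beta=\alpha_{(\ell_1,\dots,\ell_i)}$, decompose $\beta=\gamma_1+\cdots+\gamma_k$ as a sum of positive roots (possible because $\beta$ lies in the positive root lattice). Linearity of $\PR$ forces some summand $\gamma_m$ to satisfy $\PR(\gamma_m)<0$. Since each simple-root multiplicity in $\gamma_m$ is bounded by the corresponding multiplicity in $\beta$, one can select a subset $S\subseteq\{1,\dots,i\}$ of strand positions whose labels form exactly the simple-root multiset of $\gamma_m$; choosing leftmost instances of each label keeps the rearrangement to follow minimal.

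Next I would show $e(\bell,\Br)\in J$ by transporting the $S$-strands to the leftmost $|S|$ positions via KLR crossings. Once they are in place the resulting idempotent $e(\bell',\Br)$ satisfies $\alpha_{(\ell'_1,\dots,\ell'_{|S|})}=\gamma_m$ and so is a root-type generator. For an adjacent swap at positions $j,j+1$ with ``distant'' labels (neither equal nor Dynkin-adjacent), the bigon relation \eqref{bigon} gives $\tau_j^2=\mathrm{id}$, hence
\[ e(\bell,\Br)=\tau_j\cdot e(\bell^{(j,j+1)},\Br)\cdot\tau_j, \]
and induction on the number of swaps needed to bring $S$ to the prefix reduces us to a root-type idempotent sitting in $J$.

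The main obstacle is the case where an unavoidable swap involves equal or Dynkin-adjacent labels, because then the bigon relation produces zero or extra dotted terms rather than the identity. Equal-label swaps can be sidestepped entirely by reshuffling $S$ among same-label strands (the multiset of labels is what matters, not the specific indices). For Dynkin-adjacent swaps I expect to run a secondary induction on total dot count, using \eqref{dotcross} to commute dots past crossings and the cyclotomic relation \eqref{cyclotomic} to absorb dots that accumulate near the red strand, reducing each correction diagram to terms the primary induction already handles. The symmetric argument on the right side covers the case $\PR(\alpha_{(r_1,\dots,r_j)})>0$.
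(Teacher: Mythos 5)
The paper's proof takes an entirely different, module-theoretic route. Rather than moving strands diagrammatically, it works with projective $R_\beta$-modules: for each indecomposable summand $P$ of $R_\beta e(\Bi)$ with simple quotient $L$, it invokes the cuspidal decomposition of $L$ as the head of $L_1\circ\cdots\circ L_n$, uses convexity of the order $\preceq$ to conclude the first cuspidal piece satisfies $\PR(\beta_1)<0$, uses genericity of $\PR$ (via a cited result of Tingley--Webster) to conclude each $\beta_k$ is a root, and finishes by observing that any concatenation $\Bi'$ of idempotents acting nonzero on each $L_k$ satisfies $e(\Bi')L\neq 0$ and begins with a prefix summing to the negative-pressure root $\beta_1$. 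This exhibits $P$ as a summand of $R_\beta e(\Bi')$ and hence $e(\bell,\Br)\in J$.

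Your diagrammatic approach has a genuine gap at precisely the point you flag as ``the main obstacle.'' For Dynkin-adjacent labels $i=j\pm 1$, relation \eqref{bigon} gives $\psi_p^2\, e = Q_{ij}(y_p,y_{p+1})\,e$ rather than $e$, so conjugating $e(\bell^{(p,p+1)},\Br)$ by the crossing $\psi_p$ produces $Q_{ij}(y_p,y_{p+1})\,e(\bell,\Br)$, which shows only that a positive-degree dot-polynomial multiple of the idempotent lies in $J$ --- strictly weaker than $e(\bell,\Br)\in J$. The proposed ``secondary induction on total dot count'' cannot recover the difference: crossings only introduce dots and never remove them, and \eqref{cyclotomic} trades dots on the strand next to the red for a pulled-across strand only when exactly $\langle\Lambda,\alpha_i\rangle$ dots are present, which gives no mechanism for stripping off the factor $Q_{ij}$. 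Moreover the obstruction is unavoidable, not just technical: with $e\geq 3$, $\bell=(1,0,0)$, and $\PR$ chosen so that $\PR(\alpha_0)<0<\PR(\alpha_1)$ and $\PR(\alpha_0+\alpha_1)>0>\PR(2\alpha_0+\alpha_1)$, the only negative-pressure root that is a sub-multiset of the labels is $\alpha_0$, and bringing a $0$-strand to the leftmost position forces a crossing with the $1$-strand, which is Dynkin-adjacent to $0$; equal-label reshuffling does not help. The paper's cuspidal argument sidesteps all of this by never moving strands one crossing at a time, instead characterizing which idempotents act nonzero on the simple quotients of $R_\beta e(\Bi)$.
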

\begin{proof}
Assume for now that we have $\beta=\alpha_{(\ell_1,\dots, \ell_i)}$ and $\PR(\beta)<0$.  Consider the projective module over $R_{\beta}$ defined by the idempotent for $\Bi=(\ell_1,\dots, \ell_i)$.  In order to complete the proof, we need to show that for any indecomposable summand $P$ of this projective, there is a sequence $\Bi'=(\ell_1', \cdots, \ell_i')$ such that
\begin{enumerate}
    \item The module $P$ is also a summand of $R\cdot e(\Bi')$.
     \item For some $p$, the sum $\beta'_p=\al_{(\ell_1', \cdots, \ell_p')}$ is a root that satisfies $\PR(\beta'_p)>0$.  
\end{enumerate}

Of course, any indecomposable summand $P$ has a simple quotient $L$, and we only need to prove that $e(\Bi')L\neq 0$ for some $\Bi'$ as above. This follows from the cuspidal decomposition of $L$ \cite[Thm. 2.4]{tingleyMirkovicVilonenPolytopes2016}:  
the simple $L$ is the unique quotient of an induction $L_1\circ \cdots \circ L_n$, where $L_k$ is a cuspidal simple $R_{\beta_k}$ module, and $\beta_1\preceq \beta_2\preceq \cdots \preceq\beta_n$;  by convexity, we have $\beta_1\preceq \beta$, and $\PR(\beta_1)<0$.  Furthermore, \cite[Cor. 2.12]{tingleyMirkovicVilonenPolytopes2016} shows that for a generic choice of $\PR$, we must have that $\beta_k$ is a root (possibly imaginary).  In particular, any concatenation of idempotents with non-zero image on $L_1,\dots, L_n$ gives a $\Bi'$ with the properties (1-3) above.

The proof for $\PR(\alpha_{(r_1,\dots, r_{j})})>0$ is completely symmetric.  
\end{proof}

\subsection{Actions of the Weyl group}
\label{sec:Weyl}

As discussed in the introduction, we can structure our discussion using the action of the Weyl group of our chosen Cartan datum.  In the case of  $A_{e-1}^{(1)}$, this Coxeter group is generated by reflections $s_0,\dots, s_{e-1}$ subject to the usual Coxeter relations---the reflections $s_{i},s_{i+1}$ generate a copy of $\Sym_3$. 

This group will act on the defining parameters of the steadied quotient algebras by two related, but subtly different, actions.

Given any real root $\beta$, the $\pm\beta$ root spaces generate a copy of $\mathfrak{sl}_2$.  This subalgebra contains a unique line in the Cartan, and there is a unique element $\beta^{\vee}$ of this line such that $\langle
\beta,\beta^{\vee}\rangle=2.$  
\begin{defn}\label{alpha}
    Given $w \in \affweyl$, let $w \cdot \alpha$ be the usual action on weights. For any root $\beta$, we have the reflection 
    \begin{align*}
        s_\beta \cdot \alpha= \alpha -\langle \alpha,\beta^{\vee}\rangle \beta. 
    \end{align*}   
    We let $s_i=s_{\alpha_i}$ for simple roots.
\end{defn}

Using the usual dual action, this induces an action on pressures as well: for $w \in \affweyl$, let $w \cdot \PR$ be the pressure function obtained by precomposing $\PR$ with $w^{-1}$.  In the case of  $A_{e-1}^{(1)}$, the simple reflections act by
    \begin{equation*}
        s_i \cdot \PR (\alpha_j) = \begin{cases}
            -\PR (\alpha_j) & i =j\\
            \PR(\alpha_j) + \PR (\alpha_i) & j = i \pm 1 \\
            \PR (\alpha_j) & \text{otherwise}
        \end{cases}
    \end{equation*}  
    
We will typically want to associate the weight $\Lambda-\alpha$ to a given algebra. In particular, the natural Weyl group action to consider on our data corresponds to the action on these weights.  Thus, we let
\begin{equation} \label{eq:w-dot-action}
w\Ldot{\Lambda}\alpha=w(\alpha-\Lambda)+\Lambda\qquad \qquad s_\beta\Ldot{\Lambda}\alpha=\alpha -\langle \Lambda-\alpha,\beta^{\vee}\rangle \beta	
\end{equation}
We could equivalently write this definition as $\Lambda-w\Ldot{\Lambda}\alpha=w(\Lambda-\alpha)$. 

In the context of Ariki-Koike algebras, we often think of the coefficients $b_i$ as corresponding to the number of boxes of residue $i$ in Young diagrams corresponding to a given box.  In these terms, the integer $\al_i^{\vee}(\Lambda-\alpha)$ can be interpreted as $\#\{ \text{addable }i\text{-boxes}\}-\#\{ \text{removable }i\text{-boxes}\}$.  Thus, the action of $s_i$ can be understood as adding or removing the appropriate number of boxes of residue $i$.

 Let us examine these actions on \cref{ex:2a02a1}. Recall that the generating idempotents there were 
 
 \[ \begin{tikzpicture}[scale=0.5, baseline=10]
     \red{0}{0} \idm{1}{0}{0} \idm{2}{0}{1} \idm{3}{0}{1} \idm{4}{0}{0}
 \end{tikzpicture} \quad \text{and} \quad \begin{tikzpicture}[scale=0.5, baseline=10]
     \red{0}{0} \idm{1}{0}{0} \idm{2}{0}{1} \idm{3}{0}{0} \idm{4}{0}{1}
 \end{tikzpicture} \]

\begin{example}[$\R{s_0\cdot \PRz}{\Lambda_0}{2\alpha_0+2\alpha_1}$]\label{ex:2a02a1-act0:chi} Let $\chi=s_0\cdot \PRz,$ so 
\[ \chi (\alpha_0) = \frac12 \qquad \qquad  \chi (\alpha_1) = -\frac32.\]

We again consider the full range of possible strand labels, each of which corresponds to $5$ different possible idempotents based on the placement of the red strand:
\[ 0011 , \quad 0101, \quad 0110, \quad 1001, \quad 1010, \quad 1100\]

Since $\chi(\alpha_1)<0$ and $\langle \Lambda, \alpha_1\rangle=0$, any such idempotent with left-most black strand labelled $1$, regardless of whether it lies to the left or right of the red strand, must be zero. Thus any idempotent labelled $1001, 1010,$ or $1100$ is zero. Similarly, any idempotent with a $0$-strand to the far right (right of the red strand) is also zero. Since $\chi(\delta) = -1,$ and $\chi(2\alpha_0+\alpha_1)<0$, all idempotents with three or more black strands on the left-hand side are zero. That $1$-strands can move freely across the red strand eliminates a few more options.

For example:
\begin{center}
 \begin{tikzpicture}[scale=0.5]
     \red{2}{0} \idm{0}{0}{0} \idm{1}{0}{0} \idm{3}{0}{1} \idm{4}{0}{1} \dcf{5}{0}{=} \red{8}{0} \idm{6}{0}{0} \idm{7}{0}{0} \idm{10}{0}{1} \lpull{9}{0}{1}{1} \dcf{11}{0}{= \; 0}
\end{tikzpicture}
   
\end{center}

The non-zero idempotents in the resulting algebra are 
\begin{center}
 \begin{tikzpicture}[scale=0.5]\node at (-1.2,1){$e_1=$};
    \red{0}{0} \idm{1}{0}{0} \idm{2}{0}{0} \idm{3}{0}{1} \idm{4}{0}{1} 
    \end{tikzpicture} 
    \qquad  \qquad  
\begin{tikzpicture}[scale=0.5]\node at (6.8,1){$e_2=$};
    \red{8}{0} \idm{9}{0}{0} \idm{10}{0}{1} \idm{11}{0}{0} \idm{12}{0}{1}
    \end{tikzpicture}      
    \qquad   
 \begin{tikzpicture}[scale=0.5]\node at (14.8,1){$e_3=$};
    \red{17}{0} \idm{16}{0}{0} \idm{18}{0}{0} \idm{19}{0}{1} \idm{20}{0}{1} 
\end{tikzpicture}   
\end{center}
This algebra is quite large, but principally because of the dimensions of its simple representations; if $\operatorname{char}(\K)\neq 2$, then it has a 6-dimensional irreducible representation.  
The algebra $e_2\mathcal{R}e_2$ is isomorphic to the wreath product generated by $s,y_1,y_2$ with relations 
\[s^2-1=y_1^2=y_2^2=y_1y_2-y_2y_1=y_1s-sy_2=y_2s-sy_1=0\] 
under the map
\begin{center}
 \begin{tikzpicture}[scale=0.5]\node at (-2.5,1){$s-1\mapsto$};
    \red{0}{0} \X{1}{0}{0}{0} \X{2}{0}{1}{1} 
 \end{tikzpicture} \qquad  \begin{tikzpicture}[scale=0.5]\node at (5.5,1){$y_1\mapsto$};
    \red{8}{0} \idm{9}{0}{0} \point{(10,1)} \idm{10}{0}{1} \idm{11}{0}{0} \idm{12}{0}{1}  \end{tikzpicture} \\
 \begin{tikzpicture}[scale=0.5]\node at (5.5,1){$y_2\mapsto$};
    \red{8}{0} \idm{9}{0}{0} \point{(12,1)} \idm{10}{0}{1} \idm{11}{0}{0} \idm{12}{0}{1}\dm{13}{0}\red{14}{0} \idm{15}{0}{0} \point{(17,1)} \idm{16}{0}{1} \idm{17}{0}{0} \idm{18}{0}{1}
  \end{tikzpicture}     
\end{center}
induced by the homomorphism of \cite[Cor. 6.19]{evseevRoCKBlocks}; this is proven by hand on pages 1416--17 of \cite{evseevRoCKBlocks}. If $\operatorname{char}(\K)\neq 2$, then $\mathcal{R}e_2$ is a projective generator, but if $\operatorname{char}(\K)= 2$, the algebra above is local and $\mathcal{R}e_2$ is indecomposable.  There is another indecomposable projective in this case, given by $\mathcal{R}e_1'$ for the idempotent 
\begin{center}
 \begin{tikzpicture}[scale=0.5]\node at (-2.5,1){$e_1'=$};
    \red{0}{0} \nXse{1}{0}{0}{0} \nXse{3}{0}{1}{1} 
 \end{tikzpicture} 
 \end{center}
\end{example}

\begin{example}[$\R{s_1\cdot \PRz}{\Lambda_0}{2\alpha_0+2\alpha_1}$]\label{ex:2a02a1-act1:chi} 
Let $\chi=s_1\cdot \PRz,$ so 
\[\chi (\alpha_0) = -\frac32 \qquad \qquad  \chi (\alpha_1) = \frac12.\]

Consider the idempotent
\begin{equation*}
     \begin{tikzpicture}[scale=0.5]
     \node at (-1,1){$e'=$};
    \red{1}{0} \idm{0}{0}{1} \idm{2}{0}{1} \idm{3}{0}{0} \idm{4}{0}{0} \dcf{5}{0}{=} \idm{6}{0}{1} \red{7}{0} \lpull{8}{0}{1}{1} \idm{9}{0}{0} \idm{10}{0}{0}
\end{tikzpicture}
\end{equation*}
Applying \cref{lem:id-sq}, we obtain
\begin{center}
    \begin{tikzpicture}[scale=0.5]
        \dcf{0}{0}{e(1,100) = } \idm{2}{0}{1} \red{3}{0} \lpull{4}{0}{1}{1} \rpull{5}{0}{0.25}{0} \lpull{6}{0}{3.125}{0} 
        \point{(4.5,0.35)}
        \dcf{7}{0}{-} \idm{8}{0}{1} \red{9}{0} \lpull{10}{0}{1}{1} \rpull{10.8}{0}{0.25}{0} \lpull{12}{0}{3.125}{0} 
        \point{(10.95,0.08)}
        \dcf{13}{0}{-} \idm{14}{0}{1} \red{15}{0} \lpull{16}{0}{1}{1} \rpull{17}{0}{0.25}{0} \lpull{18}{0}{3.125}{0}
        \point{(17.75,1.925)}
    \end{tikzpicture}
\end{center}
which is zero since $\chi(\alpha_0+2\alpha_1)<0.$

Continuing with arguments similar to \cref{ex:2a02a1-act0:chi}, we obtain non-zero idempotents
\begin{center}
 \begin{tikzpicture}[scale=0.5]
    \node at (-1.2,1){$e_1=$}; \red{0}{0} \idm{1}{0}{1} \idm{2}{0}{0} \idm{3}{0}{1} \idm{4}{0}{0} 
 \end{tikzpicture} \qquad  \begin{tikzpicture}[scale=0.5]
    \node at (6.8,1){$e_2=$}; \red{8}{0} \idm{9}{0}{0} \idm{10}{0}{1} \idm{11}{0}{1} \idm{12}{0}{0}
 \end{tikzpicture} \qquad  \begin{tikzpicture}[scale=0.5]\node at (14.8,1){$e_3=$};
    \red{17}{0} \idm{16}{0}{1} \idm{18}{0}{0} \idm{19}{0}{1} \idm{20}{0}{0} 
\end{tikzpicture}   
\end{center}

By \cref{conj}, we must have a Morita equivalence \[\R{s_1\cdot \PRz}{\Lambda_0}{2\alpha_0+2\alpha_1}\sim \R{\PRz}{\Lambda_0}{2\alpha_0+2\alpha_1}.\]  In fact, this is induced by an isomorphism of algebras \[(e_2+e_3)\R{s_1\cdot \PRz}{\Lambda_0}{2\alpha_0+2\alpha_1}(e_2+e_3) \cong  \R{\PRz}{\Lambda_0}{2\alpha_0+2\alpha_1}\] which sends
\[ \begin{tikzpicture}[scale=0.5,baseline=10]
    \red{0}{0} \idm{-1}{0}{1} \idm{1}{0}{0} \idm{2}{0}{1} \idm{3}{0}{0} 
 \end{tikzpicture}\,\mapsto \, \begin{tikzpicture}[scale=0.5,baseline=10]
\red{0}{0} \idm{1}{0}{0} \idm{2}{0}{1} \idm{3}{0}{0} \idm{4}{0}{1} 
 \end{tikzpicture}\qquad \qquad\begin{tikzpicture}[scale=0.5,baseline=10]
    \red{0}{0} \idm{1}{0}{0} \idm{2}{0}{1} \idm{3}{0}{1} \idm{4}{0}{0} 
 \end{tikzpicture}\,\mapsto \, \begin{tikzpicture}[scale=0.5,baseline=10]
\red{0}{0} \idm{1}{0}{0} \idm{2}{0}{1} \idm{3}{0}{1} \idm{4}{0}{0}  
 \end{tikzpicture}\]

\end{example}

\begin{example}[$\R{s_1s_0\cdot \PRz}{\Lambda_0}{2\alpha_0+2\alpha_1}$]\label{ex:2a02a1-act10:chi} 
Let $\chi=s_1s_0\cdot \PRz$ so, 
\[\chi (\alpha_0) = -\frac52 \qquad \qquad  \chi (\alpha_1) = \frac32.\]

 Note that while $\chi(\alpha_0)$ and $\chi(\alpha_1)$ have the same sign as in \cref{ex:2a02a1-act1:chi}, we now have $\chi(\alpha_0+2\alpha_1)>0$ so that (for example)
\begin{center}
    \begin{tikzpicture}[scale=0.5]
      \red{0}{0} \idm{1}{0}{0} \idm{2}{0}{1} \idm{3}{0}{1} \idm{4}{0}{0}     
    \end{tikzpicture}
\end{center}
is zero. On the other hand, while we can still apply \cref{lem:id-sq} to $e(1,100)$ to pull a $1$-strand and $0$-strand left, this idempotent, which was zero in \cref{ex:2a02a1-act1:chi}, is nonzero due to the change in the pressure function.

Overall, we have the following non-zero idempotents:

\begin{center}
 \begin{tikzpicture}[scale=0.5]
     \node at (-1.2,1){$e_1=$};  \red{0}{0} \idm{1}{0}{1} \idm{2}{0}{1} \idm{3}{0}{0} \idm{4}{0}{0} 
     \end{tikzpicture} \qquad  \begin{tikzpicture}[scale=0.5]
    \node at (5.8,1){$e_2=$}; 
    \red{7}{0} \idm{8}{0}{1} \idm{9}{0}{0} \idm{10}{0}{1} \idm{11}{0}{0}
     \end{tikzpicture} \qquad  \begin{tikzpicture}[scale=0.5]\node at (12.8,1){$e_3=$};
    \red{15}{0} \idm{14}{0}{1} \idm{16}{0}{1} \idm{17}{0}{0} \idm{18}{0}{0} 
\node at (20.8,1){$e_4=$};
    \idm{22}{0}{1} \red{23}{0} \idm{24}{0}{0} \idm{25}{0}{1} \idm{26}{0}{0} 
\node at (28.8,1){$e_5=$};
    \idm{30}{0}{1} \idm{31}{0}{1} \idm{32}{0}{0} \red{33}{0} \idm{34}{0}{0}
\end{tikzpicture}   
\end{center}
While not isomorphic to $\R{s_0\cdot \PRz}{\Lambda_0}{2\alpha_0+2\alpha_1}$, the same calculations as in that case, we have an isomorphism to $e_1\mathcal{R}e_1$ of the same wreath product algebra given by  
\begin{center}
 \begin{tikzpicture}[scale=0.5]\node at (-2.5,1){$s-1\mapsto$};
    \red{0}{0} \X{1}{0}{1}{1} \X{2}{0}{0}{0} 
 \end{tikzpicture} \qquad  \begin{tikzpicture}[scale=0.5]\node at (5.5,1){$y_1\mapsto$};
    \red{8}{0} \idm{9}{0}{1} \point{(9,1)} \idm{10}{0}{0} \idm{11}{0}{1} \idm{12}{0}{0}  \end{tikzpicture} \\
 \begin{tikzpicture}[scale=0.5]\node at (5.5,1){$y_2\mapsto$};
    \red{8}{0} \idm{9}{0}{1} \point{(11,1)} \idm{10}{0}{0} \idm{11}{0}{1} \idm{12}{0}{0}\dm{13}{0}\red{14}{0} \idm{15}{0}{1} \point{(18,1)} \idm{16}{0}{0} \idm{17}{0}{1} \idm{18}{0}{0}
  \end{tikzpicture}     
\end{center}
In fact, \[(e_1+e_2)\R{s_1s_0\cdot \PRz}{\Lambda_0}{2\alpha_0+2\alpha_1}(e_1+e_2)\cong (e_1+e_2)\R{s_0\cdot \PRz}{\Lambda_0}{2\alpha_0+2\alpha_1}(e_1+e_2)\] by reflecting the portion of the diagram left of the red line in a vertical axis---this isomorphism induces a Morita equivalence.  This is expected from \cref{conj}, since $s_0s_1\Ldot{\Lambda_0}(2\alpha_0+2\alpha_1)=s_0\Ldot{\Lambda_0}(2\alpha_0+2\alpha_1)$.  
\end{example}

To observe behaviour for a fixed $p$-weight on a larger scale, let us look at a chart of generating idempotents and idempotents in the ideal as we act on the pressure function (note that some idempotents which are consistently zero have been excluded):

\begin{center}
\begin{tabular}{ c| c c c c c c c c c c c c }
$\alpha=2\alpha_0+\alpha_1$ & 

\resizebox{10mm}{!}{
  \begin{tikzpicture}[scale=0.15]
    \red{0}{0} \idm{1}{0}{0} \idm{2}{0}{0}\idm{3}{0}{1}
  \end{tikzpicture}
  }

 & \resizebox{10mm}{!}{
  \begin{tikzpicture}[scale=0.15]
    \red{0}{0} \idm{1}{0}{0} \idm{2}{0}{1}\idm{3}{0}{0}
  \end{tikzpicture}
  } & \resizebox{10mm}{!}{
  \begin{tikzpicture}[scale=0.15]
    \red{0}{0} \idm{1}{0}{1} \idm{2}{0}{0}\idm{3}{0}{0}
  \end{tikzpicture}
  } & \resizebox{10mm}{!}{
  \begin{tikzpicture}[scale=0.15]
    \red{1}{0} \idm{0}{0}{0} \idm{2}{0}{0}\idm{3}{0}{1}
  \end{tikzpicture}
  } & \resizebox{10mm}{!}{
  \begin{tikzpicture}[scale=0.15]
    \red{1}{0} \idm{0}{0}{0} \idm{2}{0}{1}\idm{3}{0}{0}
  \end{tikzpicture}
  } &  \resizebox{10mm}{!}{
  \begin{tikzpicture}[scale=0.15]
    \red{1}{0} \idm{0}{0}{1} \idm{2}{0}{0}\idm{3}{0}{0}
  \end{tikzpicture}
  } & \resizebox{10mm}{!}{
  \begin{tikzpicture}[scale=0.15]
    \red{2}{0} \idm{0}{0}{0} \idm{1}{0}{0}\idm{3}{0}{1}
  \end{tikzpicture}
  }  \\ \hline
\makecell{$\chi(\alpha_0)=\frac{-1}{2}$\\
$\chi(\alpha_1)=\frac{-1}{2}
$} & 0& \checkmark & 0 &0 &0 &0 &0\\ \hline 
\makecell{$s_0\chi(\alpha_0)=\frac{1}{2}$\\
$s_0\chi(\alpha_1)=\frac{-3}{2}
$} & \checkmark & 0 & 0 & \checkmark & 0 & 0 &0\\  \hline 
\makecell{$s_1\chi(\alpha_0)=\frac{-3}{2}$\\
$s_1\chi(\alpha_1)=\frac{1}{2}$}& 0& \checkmark & 0 &0 &0 &0 &0 \\  \hline 
\makecell{$s_1s_0\chi(\alpha_0)=\frac{-5}{2}$\\
$s_1s_0\chi(\alpha_1)=\frac{3}{2}$} & 0& \checkmark & 0 &0 &0 &0 &0 \\  \hline 
\makecell{$s_0s_1\chi(\alpha_0)=\frac{3}{2}$\\
$s_0s_1\chi(\alpha_1)=\frac{-5}{2}$} & 0& 0& 0& \checkmark & 0 & 0 & \checkmark \\  \hline 
\makecell{$s_0s_1s_0\chi(\alpha_0)=\frac{5}{2}$\\
$s_0s_1s_0\chi(\alpha_1)=\frac{-7}{2}$} & 0& 0& 0& \checkmark & 0 & 0 & \checkmark \\  \hline 
 \makecell{$s_1s_0s_1\chi(\alpha_0)=\frac{-7}{2}$\\
$s_1s_0s_1\chi(\alpha_1)=\frac{5}{2}$} & 0& \checkmark & 0 &0 &0 &0 &0 \\  \hline
 \makecell{$s_0s_1s_0s_1\chi(\alpha_0)=\frac{7}{2}$\\
$s_0s_1s_0s_1\chi(\alpha_1)=\frac{-9}{2}$}  & 0& 0& 0& \checkmark & 0 & 0 & \checkmark \\  \hline
 \makecell{$s_1s_0s_1s_0\chi(\alpha_0)=\frac{-9}{2}$\\
$s_1s_0s_1s_0\chi(\alpha_1)=\frac{7}{2}$} & 0& \checkmark & 0 &0 &0 &0 &0\\  \hline
\end{tabular}
\end{center}
We now do the case $\alpha=2 \delta,$ noting that some idempotents which are consistently zero, and some which can be obtained by freely moving a strand such that $\langle \Lambda, \alpha_i\rangle = 0$ across the red strand to obtain an idempotent already listed have been left out of the chart.
\begin{center}
\begin{tabular}{ c | c c c c c c c c c c c c }
$\alpha=2\alpha_0+2\alpha_1$ & 

\resizebox{10mm}{!}{
  \begin{tikzpicture}[scale=0.15]
    \red{0}{0} \idm{1}{0}{1} \idm{2}{0}{1}\idm{3}{0}{0} \idm{4}{0}{0}
  \end{tikzpicture}
  }

 & \resizebox{10mm}{!}{
  \begin{tikzpicture}[scale=0.15]
   \red{0}{0} \idm{1}{0}{1} \idm{2}{0}{0}\idm{3}{0}{1} \idm{4}{0}{0}
  \end{tikzpicture}
  } & 
  
  \resizebox{10mm}{!}{
  \begin{tikzpicture}[scale=0.15]
    \red{0}{0} \idm{1}{0}{1} \idm{2}{0}{0}\idm{3}{0}{0} \idm{4}{0}{1}
  \end{tikzpicture}
  } & 
  \resizebox{10mm}{!}{
  \begin{tikzpicture}[scale=0.15]
    \red{0}{0} \idm{1}{0}{0} \idm{2}{0}{1} \idm{3}{0}{0} \idm{4}{0}{1}
  \end{tikzpicture}
  } & 
  \resizebox{10mm}{!}{
  \begin{tikzpicture}[scale=0.15]
    \red{0}{0} \idm{1}{0}{0} \idm{2}{0}{0}\idm{3}{0}{1} \idm{4}{0}{1}
  \end{tikzpicture}
  } &  
  \resizebox{10mm}{!}{
  \begin{tikzpicture}[scale=0.15]
    \red{0}{0} \idm{1}{0}{0} \idm{2}{0}{1}\idm{3}{0}{1} \idm{4}{0}{0}
  \end{tikzpicture}
  } & \resizebox{10mm}{!}{
  \begin{tikzpicture}[scale=0.15]
    \red{1}{0} \idm{0}{0}{0} \idm{2}{0}{1}\idm{3}{0}{0} \idm{4}{0}{1}
  \end{tikzpicture}
  } & \resizebox{10mm}{!}{
  \begin{tikzpicture}[scale=0.15]
    \red{1}{0} \idm{0}{0}{0} \idm{2}{0}{0}\idm{3}{0}{1} \idm{4}{0}{1}
  \end{tikzpicture}
  } & \resizebox{10mm}{!}{
  \begin{tikzpicture}[scale=0.15]
    \red{1}{0} \idm{0}{0}{0} \idm{2}{0}{1}\idm{3}{0}{1} \idm{4}{0}{0}
  \end{tikzpicture}
  }

  &  \resizebox{10mm}{!}{
  \begin{tikzpicture}[scale=0.15]
    \red{3}{0} \idm{0}{0}{1} \idm{1}{0}{1}\idm{2}{0}{0} \idm{4}{0}{0}
  \end{tikzpicture}
  }

  \\ \hline
\makecell{$\chi(\alpha_0)=\frac{-1}{2}$\\
$\chi(\alpha_1)=\frac{-1}{2}
$} & 0&0&0& \checkmark & 0 & \checkmark & 0 & 0 & 0 & 0 \\ \hline 
\makecell{$s_0\chi(\alpha_0)=\frac{1}{2}$\\
$s_0\chi(\alpha_1)=\frac{-3}{2}
$} & 0 & 0 & 0 & \checkmark & \checkmark & 0 & 0 & \checkmark & 0 & 0  \\  \hline 
\makecell{$s_1\chi(\alpha_0)=\frac{-3}{2}$\\
$s_1\chi(\alpha_1)=\frac{1}{2}$} & 0 & \checkmark & 0 & 0 & 0 & \checkmark & 0 & 0 & 0 & 0  \\  \hline 
\makecell{$s_1s_0\chi(\alpha_0)=\frac{-5}{2}$\\
$s_1s_0\chi(\alpha_1)=\frac{3}{2}$} & \checkmark & \checkmark & 0 & 0 & 0 & 0 & 0 & 0 & 0 &  \checkmark \\  \hline 
\makecell{$s_0s_1\chi(\alpha_0)=\frac{3}{2}$\\
$s_0s_1\chi(\alpha_1)=\frac{-5}{2}$}  & 0 & 0 & 0 & \checkmark & \checkmark & 0 & 0 & \checkmark & 0 & 0  \\  \hline 
\makecell{$s_0s_1s_0\chi(\alpha_0)=\frac{5}{2}$\\
$s_0s_1s_0\chi(\alpha_1)=\frac{-7}{2}$} & 0 & 0 & 0 & \checkmark & \checkmark & 0 & 0 & \checkmark & 0 & 0 \\  \hline 
 \makecell{$s_1s_0s_1\chi(\alpha_0)=\frac{-7}{2}$\\
$s_1s_0s_1\chi(\alpha_1)=\frac{5}{2}$} & \checkmark & \checkmark & 0 & 0 & 0 & 0 & 0 & 0 & 0 &  \checkmark \\  \hline
 \makecell{$s_0s_1s_0s_1\chi(\alpha_0)=\frac{7}{2}$\\
$s_0s_1s_0s_1\chi(\alpha_1)=\frac{-9}{2}$} & 0 & 0 & 0 & \checkmark & \checkmark & 0 & 0 & \checkmark & 0 & 0  \\  \hline
 \makecell{$s_1s_0s_1s_0\chi(\alpha_0)=\frac{-9}{2}$\\
$s_1s_0s_1s_0\chi(\alpha_1)=\frac{7}{2}$} & \checkmark & \checkmark & 0 & 0 & 0 & 0 &0 & 0 & 0 & \checkmark \\  \hline
\end{tabular}
\end{center}

Looking at the action of the same Weyl group elements on the cyclotomic side, we observe the following:

\begin{example}[$\R{\PRz}{\Lambda_0}{s_0\cdot (2\alpha_0+2\alpha_1)}$]\label{ex:2a02a1-act0:alpha}

Starting with the Young diagrams from \cref{ex:2a02a1} we have one addable $0$-box for the partition $(4)$, and two addable minus one removable for the partition $(3,1)$. In both cases, we add a single $0$-strand. Possible Young tableaux giving idempotent labellings are:
\[\begin{ytableau}
    1 & 2 & 3 & 4 & 5
\end{ytableau} \; , \qquad  \begin{ytableau}
    1 & 2 & 4  \\ 3 \\ 5 
\end{ytableau}\]
giving us the following generating idempotents:

\begin{center}
    \begin{tikzpicture}[scale=0.5]
        \red{0}{0} \idm{1}{0}{0} \idm{2}{0}{1} \idm{3}{0}{0} \idm{4}{0}{1} \idm{5}{0}{0}  
        
        \red{8}{0} \idm{9}{0}{0} \idm{10}{0}{1} \idm{11}{0}{1} \idm{12}{0}{0} \idm{13}{0}{0}

    \end{tikzpicture}
\end{center}

\end{example}

\begin{example}[$\R{\PRz}{\Lambda_0}{s_1\cdot (2\alpha_0+2\alpha_1)}$]\label{ex:2a02a1-act1:alpha}

Looking at the partition $(4)$, for example, we see that although we have one addable box of residue $1$, there is also a removable one, so overall $s_1$ adds no strands to the algebra.

\end{example}

\begin{example}[$\R{\PRz}{\Lambda_0}{s_1s_0\cdot (2\alpha_0+2\alpha_1)}$]\label{ex:2a02a1-act10:alpha}
We add two $1$-strands to \cref{ex:2a02a1-act0:alpha}. Possible Young tableaux giving idempotent labellings are:
\[\begin{ytableau}
    1 & 2 & 3 & 4 & 5 & 6 \\ 7
\end{ytableau}  \; , \qquad \begin{ytableau}
    1 & 2 & 3 & 4 & 6 & 7 \\ 5
\end{ytableau}  \; , \qquad \begin{ytableau}
    1 & 2 & 4 & 5 & 6 & 7 \\ 3
\end{ytableau}\]
giving us  generating idempotents

\begin{center}
    \begin{tikzpicture}[scale=0.5]
        \red{0}{0} 
        \idm{1}{0}{0} 
        \idm{2}{0}{1} 
        \idm{3}{0}{0} 
        \idm{4}{0}{1} 
        \idm{5}{0}{0} 
        \idm{6}{0}{1} 
        \idm{7}{0}{1} 
        
        \red{10}{0} 
        \idm{11}{0}{0} \idm{12}{0}{1} \idm{13}{0}{0} \idm{14}{0}{1} \idm{15}{0}{1}
        \idm{16}{0}{0}
        \idm{17}{0}{1}

        \red{20}{0} 
        \idm{21}{0}{0} \idm{22}{0}{1} \idm{23}{0}{1} \idm{24}{0}{0} \idm{25}{0}{1}
        \idm{26}{0}{0}
        \idm{27}{0}{1}

    \end{tikzpicture}
\end{center}

\end{example}

\section{Standardizations}
\label{sec:Standardizations}
In order to prove basic facts about the algebras $\Rz{\Lambda}{\alpha}$ and $\R{\PR}{\Lambda}{\alpha}$, we need to consider an analogue of the standard modules studied in \cite{brundanHomologicalProperties2014} and other works on KLRW algebras.

\subsection{Slope data}
\label{sec:slope-data}
\bcomments{I tried to change the indices here so $k$ is always the number of labels.  I might have missed some spots where I used it as a dummy variable. There was a real mess here, so if something looks wrong, probably I didn't fix the indices properly.}

Given a sequence $\Bi=(i_1,\dots, i_k)$, we define a piecewise linear function
\[f_{\Bi}:[0,k]\to \mathbb{R}\]
whose graph is the piecewise linear path that joins the points
$(m,\PR(\alpha_{i_1}+\cdots +\alpha_{i_m}))$.  In terms of formulas
\begin{align*}
     f_{\Bi}(x)&=\PR(\alpha_{i_1}+\cdots +\alpha_{i_{m}})+\PR(\al_{i_{m+1}})(x-m)& m\leq x\leq m+1.
\end{align*}
Let $f_{\Bi}^+\colon \mathbb{R}_{\geq 0}\to \mathbb{R}$ be the minimal concave, weakly increasing function such that $f_{\Bi}^+\geq f_{\Bi}$, and $f_{\Bi}^-\colon \mathbb{R}_{\geq 0}\to \mathbb{R}$ the maximal concave, weakly decreasing, function such that $f_{\Bi}^-\leq f_{\Bi}$.  Explicitly, 
\begin{align*}
    f_{\Bi}^+(x)&=\max\left(\epsilon f_{\Bi}(x_1)+(1-\epsilon)f_{\Bi}(x_2) \mid x_1,x_2\in [0,k], \epsilon\in [0,1], \epsilon x_1+(1-\epsilon)x_2<x\right)\\
    f_{\Bi}^-(x)&=\min\left(\epsilon f_{\Bi}(x_1)+(1-\epsilon)f_{\Bi}(x_2) \mid x_1,x_2\in [0,k], \epsilon\in [0,1], \epsilon x_1+(1-\epsilon)x_2<x\right)
\end{align*}  

More geometrically, we can find the graph of $ f_{\Bi}^{\pm}$ by starting at the origin $(u_0^{\pm},z_0^{\pm})=(0,0)$ and then inductively constructing $(u_i^{\pm},z_i^{\pm})$ by doing the following process:
\begin{enumerate}
    \item If there is a ray with beginning at $(u_n^{\pm},z_n^{\pm})$ with positive/negative slope intersecting the graph of  $f_{\Bi}$, consider the ray with greatest/least slope, and let $(u_{n+1}^{\pm},z_{n+1}^{\pm})$ be its last point of intersection with the graph.  The graph of $f_{\Bi}^{\pm}$ contains the portion of the ray between $(u_n^{\pm},z_n^{\pm})$ and $(u_{n+1}^{\pm},z_{n+1}^{\pm})$.
    \item If there is no such ray, then $f_{\Bi}^{\pm}$ is constant for $x\geq u_n^{\pm}$.  
\end{enumerate}
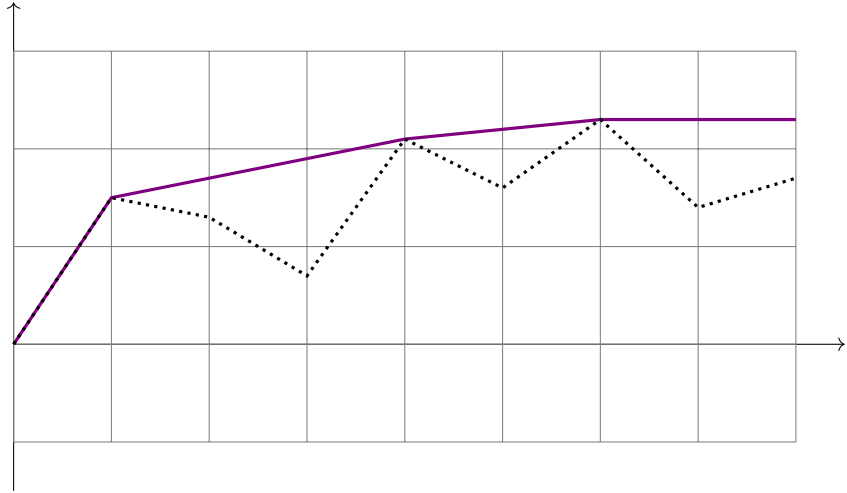
\begin{figure}\centerline{   \begin{tikzpicture}[scale=1.3]
\draw[->] (0, 0) -- (8.5, 0) ;
        \draw[->] (0, -1.5) -- (0, 3.5) ;

\draw[gray, thin] (-0, -1) grid (8, 3);

        \draw[very thick,violet!100!black] (0,0) -- (1,1.5) -- (4,2.1)--(6,2.3)--(8,2.3);
                \draw[very thick,dotted]
            (0, 0) -- (1, 1.5)
            -- (2, 1.3)
            -- (3, .7)
            -- (4, 2.1)
            -- (5, 1.6)-- (6,2.3)--(7,1.4) -- (8,1.7);
    \end{tikzpicture}
    }    \caption{An illustration of the definitions of $f_{\Bi}^+$.  The dotted black line is the graph of $f_{\Bi}$, the graph of $f_{\Bi}^+$ is in purple.}
\end{figure}

For an idempotent $e(\bell,\Br)$, we want to study the functions $f_{\Br}^+$ and $f_{\bell}^-$.   In what follows, we let $(u_n^{\pm},z_n^{\pm})$ be the points defined above for these functions, respectively.

These functions help us to understand the relations \eqref{steadiedidemp}: an idempotent is set to zero by these relations if $f_{\bell}^-\neq 0$ or $f_{\Br}^+\neq 0$.  More generally, we can think of the functions $f_{\bell}^-$  and $f_{\Br}^+$ as a measure of ``how unstable'' an idempotent is.  

The functions $f_{\bell}^-,f_{\Br}^+$ can also be described in terms of the {\bf slope datum} 
\[\boldsymbol{\gamma}(\bell,\Br)=(\gamma_{-s},\gamma_{-s+1},\dots, \gamma_0,\dots, \gamma_t)\] such that $\sum_{i=-s}^t \gamma_i=\alpha$ and
\[\frac{\chi(\gamma_{-s})}{h(\gamma_{-s})}< \frac{\chi(\gamma_{-s+1})}{h(\gamma_{-s+1})}<\cdots < \frac{\chi(\gamma_{-1})}{h(\gamma_{-1})}<0<\frac{\chi(\gamma_{1})}{h(\gamma_{1})}<\cdots <\frac{\chi(\gamma_{t})}{h(\gamma_{t})}\] where, for $m>0$, we have:
\[\gamma_{-m}=\sum_{p=u^-_{s-m}+1}^{u^-_{s-m+1}} \al_{\ell_p}\qquad\qquad  \gamma_{m}=\sum_{q=u^+_{t-m}+1}^{u^+_{t-m+1}}\al_{r_q}\]
That is, we obtain $\gamma_{-m}$ by summing the roots $\al_{\ell_i}$ between points where the function $f_{\bell}^-$ changes slopes and $\gamma_m$ by a similar sum of  $\al_{r_j}$ for $f_{\Br}^+$.   
Equivalently, we have 
\begin{align*}
	(u_{s-p+1}^-,v_{s-p+1}^-)&=\left(\sum_{a=-s}^{-p}\chi(\gamma_{-a}),\sum_{a=-s}^{-p}h(\gamma_{-a})\right)\\
 (u_{t-p+1}^+,v_{t-p+1}^+)&=\left(\sum_{b=q}^{t}\chi(\gamma_{b}),\sum_{b=q}^{t}h(\gamma_{b})\right)
\end{align*}
and this uniquely fixes $\boldsymbol{\gamma}$.  We can define functions $f^\pm_{\boldsymbol{\gamma}}$ as the unique piece-wise linear function whose graph passes through these points.

\begin{rmk}\label{rem:pressure}
    If we return to the physical metaphor of imagining pressure as a force on strands and imagine that once strands collide, they become stuck together and move according to their average pressure, then the slope datum of an idempotent exactly corresponds to the groups we will obtain as time goes to $\infty$.  In particular, $\gamma_0$ corresponds to the strands which end up stuck to the red strand, and all other components correspond to those that escape to $\pm \infty$, with the slope of $f^{\pm}_{\boldsymbol{\gamma}}$ giving the relative speeds at which different groups move when $t$ is large.  
\end{rmk}

\begin{example}\label{ex:4 graphs-1}
    Let $e=2$,  $\alpha= 3\delta$ and 
    $\chi(\alpha_0) = -3/2$, $\chi(\alpha_1)=1/2$. We'll graph some examples of these functions.  In order to visualize both functions together, we'll graph $f_{\bell}(-x)$ on the negative $x$ axis and    $f_{\Br}(x)$ is on the positive $x$-axis in purple, with the function 
        \[f(x)=\begin{cases}
        f_{\Br}^{+}(x)& x\geq 0\\
        f_{\bell}^-(-x)& x\leq 0
    \end{cases}\]  graphed in red. With these conventions, an idempotent is steady if the purple graph is entirely within the NW and SE quadrants, and the red line is the ``tightest'' weakly increasing piecewise linear function passing through the origin, and staying below the purple line for $x<0$ and above it for $x>0$. The slope data $\boldsymbol{\gamma}$ can be read off as well:  $\gamma_{-1}$ is the sum of the roots corresponding to the leftmost segment with positive slope (assuming there is such a segment left of the $y$-axis), then $\gamma_{-2}$ the next such segment, until we reach the $y$-axis.  Symmetrically, $\gamma_1$ corresponds to the rightmost segment with positive slope (if there is one right of the $x$-axis).  We finally find $\gamma_0=\alpha-\sum_{i\neq 0}\gamma_i$ by summing the roots corresponding to horizontal segments of the red graph.

\begin{equation*}
   \begin{tikzpicture}[scale=0.5,baseline=3mm]
        \idm{0}{0}{1} \idm{1}{0}{1} \red{2}{0} \idm{3}{0}{0} \idm{4}{0}{1} \idm{5}{0}{0}
        \idm{6}{0}{0}
    \end{tikzpicture}\quad \mapsto \quad  \begin{tikzpicture}[scale=0.5,baseline=-2mm]
\draw[<->] (-6.5, 0) -- (6.5, 0) ;
        \draw[<->] (0, -5.5) -- (0, 3.5) ;

\draw[gray, thin] (-6, 3) grid (6, -5);

\draw[very thick,red!100!black] (-6,0) -- (6,0);
        \draw[very thick,violet!100!black] (-2,1)-- (0,0) -- (1,-1.5) -- (2, -3) -- (3, -2.5) -- (4, -4);
\node at (10,0){$\gamma_0=3\delta$};
    \end{tikzpicture}
\end{equation*}
\begin{equation*}
   \begin{tikzpicture}[scale=0.5,baseline=3mm]
      \idm{0}{0}{1} \idm{1}{0}{1} \red{3}{0} \idm{2}{0}{0} \idm{4}{0}{1} \idm{5}{0}{0}
        \idm{6}{0}{0}
    \end{tikzpicture}\quad \mapsto \quad  \begin{tikzpicture}[scale=0.5,baseline=-2mm]
\draw[<->] (-6.5, 0) -- (6.5, 0) ;
        \draw[<->] (0, -5.5) -- (0, 3.5) ;

\draw[gray, thin] (-6, 3) grid (6, -5);

\draw[very thick,red!100!black](-6,-0.5)--(-3,-0.5) -- (0,0) -- (6,0);
        \draw[very thick,violet!100!black] (-3,-0.5)--(-2,1)-- (0,0) -- (1,-1.5) -- (2, -3) -- (3, -2.5);
\node at (10,1){$\gamma_{-1}=\al_1+\delta$};
\node at (10,0){$\gamma_0=\al_0+\delta$};
    \end{tikzpicture}
\end{equation*}
\begin{equation*}
   \begin{tikzpicture}[scale=0.5,baseline=3mm]
      \idm{0}{0}{1} \idm{2}{0}{0} \red{1}{0} \idm{3}{0}{0} \idm{4}{0}{1} \idm{5}{0}{0}
        \idm{6}{0}{1}
    \end{tikzpicture}\quad \mapsto \quad  \begin{tikzpicture}[scale=0.5,baseline=-2mm]
\draw[<->] (-6.5, 0) -- (6.5, 0) ;
        \draw[<->] (0, -5.5) -- (0, 3.5) ;

\draw[gray, thin] (-6, 3) grid (6, -5);

\draw[very thick,violet!100!black] (-1,0.5)-- (0,0) -- (1,.5) -- (2, -1) -- (3, -.5)--(4,-2)--(5,-3.5);
        
        \draw[very thick,red!100!black](-6,0)--(0,0) -- (1,.5)--(6,.5);
\node at (10,-1){$\gamma_{1}=\al_1$};
\node at (10,0){$\gamma_0=\al_0+2\delta$};

    \end{tikzpicture}
\end{equation*}
\begin{equation*}
   \begin{tikzpicture}[scale=0.5,baseline=3mm]
      \idm{0}{0}{0} \idm{1}{0}{1}  \idm{2}{0}{0} \red{3}{0} \idm{4}{0}{1} \idm{5}{0}{0}
        \idm{6}{0}{1}
    \end{tikzpicture}\quad \mapsto \quad  \begin{tikzpicture}[scale=0.5,baseline=-2mm]
\draw[<->] (-6.5, 0) -- (6.5, 0) ;
        \draw[<->] (0, -5.5) -- (0, 3.5) ;

\draw[gray, thin] (-6, 3) grid (6, -5);

\draw[very thick,violet!100!black] (-3,-2.5)--(-2,-1)--(-1,-1.5)-- (0,0) -- (1,.5) -- (2, -1) -- (3, -.5);
        
        \draw[very thick,red!100!black](-6,-2.5)--(-3,-2.5)--(-1,-1.5)--(0,0) -- (1,.5)--(6,.5);
\node at (10,-1){$\gamma_{1}=\al_1$};
\node at (10,0){$\gamma_0=\delta$};
\node at (10,1){$\gamma_{-1}=\delta$};
\node at (10,2){$\gamma_{-2}=\al_0$};
    \end{tikzpicture}
\end{equation*}
\end{example}

We partially order the set of all possible slope data by saying that $\boldsymbol{\gamma}\leq \boldsymbol{\gamma}'$ if $f_{\boldsymbol{\gamma}}^+\leq f_{\boldsymbol{\gamma}'}^+$ and   
$f_{\boldsymbol{\gamma}}^-\geq f_{\boldsymbol{\gamma}'}^-$.  
We can extend this to a pre-order on idempotents via the map $(\bell, \Br)\mapsto\boldsymbol{\gamma} (\bell, \Br)$.
The minimal element of this partial order is $\boldsymbol{\gamma}=(\al)$.

\subsection{Standard modules}
As usual when we have a pre-order on idempotents, we can define standard modules: 
\begin{defn}
    Given an idempotent $e(\bell,\Br)$, the standard module $\Delta(\bell,\Br)$ is the quotient of $\R{\PR}{\Lambda}{\al}e(\bell,\Br)$ by the submodule generated by the image $e(\bell',\Br')\R{\PR}{\Lambda}{\al}e(\bell,\Br)$ of all idempotents $e(\bell',\Br')$ with $\boldsymbol{\gamma}(\bell',\Br')\not \leq \boldsymbol{\gamma}(\bell,\Br)$.
\end{defn}

Let $e(\boldsymbol{\gamma})$ be the sum of all idempotents with a fixed slope datum $\boldsymbol{\gamma}=\boldsymbol{\gamma}(\bell,\Br)$ and let $\Delta(\boldsymbol{\gamma})$ be the sum of the corresponding standard modules.
Note that by \cref{lem:only-roots}, the module $\Delta(\boldsymbol{\gamma})$ is only non-zero if each $\gamma_i$ for $i\neq 0$ is a multiple of a positive root.

\begin{defn}
	Fix a slope datum $\boldsymbol{\gamma}$.  Let 
	\begin{align*}
		\Rg{\gamma}&=R_{\gamma_{-s}}\otimes  \cdots \otimes R_{\gamma_{-1}}\otimes \Rz{\Lambda}{\gamma_0}\otimes R_{\gamma_1}\otimes  \cdots \otimes R_{\gamma_t}\\
		\Cg{\gamma}&= C_{\gamma_{-s}}\otimes  \cdots \otimes C_{\gamma_{-1}}\otimes \R{\PR}{\Lambda}{\gamma_0}\otimes C_{\gamma_1}\otimes  \cdots \otimes C_{\gamma_t}	\end{align*}
\end{defn}

Note that there is a homomorphism given by horizontal composition $\Rg{\gamma}\to \Rz{\Lambda}{\al}$.  Given modules $M_{p}$ over $R_{\gamma_p}$ if $p\neq 0$ and over $\Rz{\Lambda}{\gamma_0}$ if $p=0$, we can consider the base change:
\[M_{-s}\circ \cdots \circ M_{r}=\Rz{\Lambda}{\al}\otimes_{\Rg{\gamma}}M_{-s}\otimes \cdots \otimes  M_{r}.\]
An important lemma for us is: 
\begin{lem}\label{lem:leq-shuffle}
    If the module $M_{-s}\otimes \cdots \otimes  M_{r}$ factors through $\Cg{\gamma}$, then if \[e(\bell',\Br')(M_{-s}\circ \cdots \circ M_{r})\neq 0,\] we must have $\boldsymbol{\gamma}(\bell',\Br')\leq \boldsymbol{\gamma}$.  Furthermore, $e(\boldsymbol{\gamma})(M_{-s}\circ \cdots \circ M_{r})=M_{-s}\otimes \cdots \otimes M_{r}$.  
\end{lem}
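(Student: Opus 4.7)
The plan is to combine the standard Mackey-type ``shuffle'' description of an induced KLRW module with a rearrangement-inequality argument applied to the envelope constructions defining $f^{\pm}$.

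First, I would set up a spanning set for the induced module. Fix the ``standard'' idempotent $e(\bell_0, \Br_0)$ in which strands arising from each $\gamma_p$-factor form a contiguous block placed outside-in following the indexing of $\boldsymbol{\gamma}$. The horizontal-composition map $\Rg{\gamma} \to \Rz{\Lambda}{\alpha}$ lands inside $e(\bell_0, \Br_0)\,\Rz{\Lambda}{\alpha}\, e(\bell_0, \Br_0)$, so a spanning set for $M_{-s}\circ \cdots \circ M_t$ consists of elements $\psi \otimes (m_{-s}\otimes\cdots\otimes m_t)$, where $\psi$ runs over minimal-length ``shuffle'' diagrams with bottom $e(\bell_0,\Br_0)$ and top some idempotent $e(\bell',\Br')$; in particular, the label sequence $(\bell',\Br')$ is always a permutation of the labels of $(\bell_0,\Br_0)$.

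Second, I would prove the key combinatorial bound: $\boldsymbol{\gamma}(\bell',\Br') \leq \boldsymbol{\gamma}$ for any rearrangement. On the right side, the standard arrangement orders the increments $\chi(\al_{r_j})$ so that reading outside-in the cumulative function $f_{\Br_0}$ has non-increasing slopes, hence is itself concave and equal to its own concave envelope; $f_{\Br_0}^+$ is then the monotone running maximum. By the rearrangement inequality, for any permutation $\sigma$ of the labels, $f_{\sigma(\Br_0)}(n) \leq f_{\Br_0}(n)$ at every integer $n$, so $f_{\Br_0}$ is a concave majorant of $f_{\sigma(\Br_0)}$; minimality of the concave envelope gives $f_{\sigma(\Br_0)}^+ \leq f_{\Br_0}^+$. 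The symmetric argument on the left yields $f_{\sigma(\bell_0)}^- \geq f_{\bell_0}^-$. Combined with Step 1, this proves the first assertion. For the equality on $e(\boldsymbol{\gamma})$, the rearrangement inequality is strict unless $\sigma$ permutes strands only within each $\gamma_p$-block: the strict inequalities $\chi(\gamma_p)/h(\gamma_p) < \chi(\gamma_{p+1})/h(\gamma_{p+1})$ in the definition of slope datum ensure that any block-mixing shuffle strictly alters the envelope at some intermediate point. Shuffles preserving each block arise as horizontal compositions of elements of $\Rg{\gamma}$, which are absorbed when we form the tensor product $\otimes_{\Rg{\gamma}}$; thus $e(\boldsymbol{\gamma})(M_{-s}\circ\cdots\circ M_t)$ is naturally identified with $M_{-s}\otimes\cdots\otimes M_t$ itself, and the cuspidality of the $M_p$ for $p\neq 0$ together with the steadiness of $M_0$ guarantees that no hidden identifications inside an individual block spoil this identification.

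The main obstacle I anticipate is the careful verification in the equality step that block-mixing shuffles really must decrease the slope datum strictly, particularly when a given root label appears in more than one $\gamma_p$-block (so that swapping such strands leaves the label sequence unchanged but permutes the ``provenance'' of the strands). The strict inequality of slopes between blocks and the semi-cuspidality of each $M_p$ (which rules out internal splittings $\alpha_{\Bi}\preceq\alpha_{\Bj}$ inside a $\gamma_p$-block) together should close this gap, but the diagrammatic bookkeeping must track carefully how the quadratic $Q_{ij}$-relations, dots, and the red-strand relations interact under these shuffles. A standard triangularity/induction argument on the number of crossings between distinct blocks should make the combinatorial bound rigorous at the diagrammatic level.
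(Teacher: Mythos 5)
There is a genuine gap. The central claim of your proposed Step 2 — that the ``standard'' arrangement $f_{\Br_0}$ has non-increasing slopes, is concave, and majorizes $f_{\sigma(\Br_0)}$ for every permutation $\sigma$ by the rearrangement inequality — is false. The slope datum orders the blocks $\gamma_p$ so that the \emph{averaged} slopes $\chi(\gamma_p)/h(\gamma_p)$ are monotone, but the individual increments $\chi(\alpha_{r_j})$ within a given $\gamma_p$-block can be arbitrary: a block $\gamma_p = \alpha_a + \alpha_b$ can have $\chi(\alpha_a)$ very large positive and $\chi(\alpha_b)$ very negative while the block average stays small. Consequently $f_{\Br_0}$ is not concave, the rearrangement inequality does not apply, and the bound $\boldsymbol{\gamma}(\bell',\Br') \leq \boldsymbol{\gamma}$ is simply \emph{not} true for arbitrary rearrangements of the labels. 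In the example above, shuffling $\alpha_a$ to the outside gives $f_{\Br'}^+(1) = \chi(\alpha_a) > f_{\boldsymbol{\gamma}}^+(1)$, so that idempotent fails the slope-data bound.

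What actually saves the lemma is that the offending rearrangements act by \emph{zero} on the module, precisely because each $M_p$ (for $p\ne 0$) factors through the semi-cuspidal quotient $C_{\gamma_p}$: cuspidality forces the idempotent $e(\alpha_a, \alpha_b)$ (with $\alpha_a \succeq \alpha_b$, or with $\chi(\alpha_a)$ of the wrong sign) to kill $M_p$. This is not a secondary cleanup as you treat it; it is the entire mechanism. The paper's proof cites \cite[Lem.\ 2.7--9]{tingleyMirkovicVilonenPolytopes2016}, which works directly with the convex-order combinatorics of cuspidal words at the level of roots $\gamma_p$, never attempting a statement about arbitrary permutations of simple-root labels. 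To repair your argument you would need to first use cuspidality to reduce to shuffles that respect the root-decomposition inside each block, and only then run a convexity argument on the block-level data $\chi(\gamma_p)/h(\gamma_p)$ — which is essentially the Tingley--Webster argument.
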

\begin{proof}
    This follows immediately from the fact that any non-trivial shuffle of cuspidal words in this order will be strictly shorter in our pre-order.   The argument is identical to that of \cite[Lem. 2.7--9]{tingleyMirkovicVilonenPolytopes2016}. 
\end{proof}
This can be extended to prove that if $M_k$ is simple for all $k$, then $M_{-s}\circ \cdots \circ M_{r}$ has a unique simple quotient, and every simple module can be written uniquely in this form.  

\begin{thm}
We have isomorphisms \[\Delta(\boldsymbol{\gamma})\cong C_{\gamma_{-s}}\circ \cdots \circ  \R{\PR}{\Lambda}{\gamma_0}\circ \cdots \circ C_{\gamma_{r}}\] 
    \[\End(\Delta(\boldsymbol{\gamma}))^{\operatorname{op}}\cong \Cg{\gamma}.\]
\end{thm}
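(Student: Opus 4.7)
The plan is to identify $\Delta(\boldsymbol{\gamma})$ concretely as the induction $N := C_{\gamma_{-s}} \circ \cdots \circ \R{\PR}{\Lambda}{\gamma_0} \circ \cdots \circ C_{\gamma_t}$, and then derive the endomorphism ring via Frobenius reciprocity along the horizontal composition map $\iota \colon \Rg{\gamma} \to \Rz{\Lambda}{\al}$. First, I would construct a natural surjection $\phi \colon \Rz{\Lambda}{\al} e(\boldsymbol{\gamma}) \twoheadrightarrow N$ that sends $e(\boldsymbol{\gamma})$ to the cyclic generator $1 \otimes \cdots \otimes 1$. Applying \cref{lem:leq-shuffle} to the factors $C_{\gamma_k}$ (and $\R{\PR}{\Lambda}{\gamma_0}$ for $k=0$), all of which factor through $\Cg{\gamma}$, shows that $N$ is supported only on idempotents $e(\boldsymbol{\gamma}')$ with $\boldsymbol{\gamma}' \leq \boldsymbol{\gamma}$. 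Consequently $\phi$ descends to a surjection $\Delta(\boldsymbol{\gamma}) \twoheadrightarrow N$.

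To prove this is an isomorphism, I need the kernel of $\phi$ to be contained in the submodule generated by the higher idempotents already quotiented out in the definition of $\Delta(\boldsymbol{\gamma})$. The kernel is generated, as a submodule, by elements coming from the defining relations of $\Cg{\gamma}$ transported through $\iota$: concatenated non-cuspidal idempotents inside each outer block $\gamma_k$, and steadied-ideal idempotents inside $\gamma_0$. In either case, such a concatenated idempotent inside $\Rz{\Lambda}{\al}$ has strictly larger slope datum than $\boldsymbol{\gamma}$: a cuspidal-order violation in a block causes $f^+$ to become larger or $f^-$ to become smaller, and steadied-ideal elements on the central block similarly produce an incompatible prefix or suffix that refines the decomposition. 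This identifies $\ker \phi$ with the submodule generated by higher idempotents, completing the isomorphism $\Delta(\boldsymbol{\gamma}) \cong N$.

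For the endomorphism statement, Frobenius reciprocity for $\iota$ yields
\[ \End_{\Rz{\Lambda}{\al}}(N) \cong \Hom_{\Rg{\gamma}}(\Cg{\gamma}, \operatorname{Res} N). \]
An $\Rg{\gamma}$-linear map out of $\Cg{\gamma}$ is determined by the image of the generator $1 \otimes \cdots \otimes 1$, which must lie in the weight space $e(\boldsymbol{\gamma}) \cdot N$. By \cref{lem:leq-shuffle}, this weight space equals $\Cg{\gamma}$, so all such maps land in $\Cg{\gamma} \subseteq \operatorname{Res} N$, and therefore $\End_{\Rz{\Lambda}{\al}}(N) \cong \End_{\Cg{\gamma}}(\Cg{\gamma}) = \Cg{\gamma}^{\operatorname{op}}$. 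Taking opposites yields $\End(\Delta(\boldsymbol{\gamma}))^{\operatorname{op}} \cong \Cg{\gamma}$.

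The principal technical obstacle is the injectivity step above: rigorously showing that the relations defining $\Cg{\gamma}$, when transported through $\iota$, correspond exactly to higher-slope idempotents in $\Rz{\Lambda}{\al}$. This hinges on a careful analysis of how concatenation of idempotents interacts with the convex-hull constructions defining $f^{\pm}_{\Bi}$, paralleling the arguments of \cite[\S 2]{tingleyMirkovicVilonenPolytopes2016} for the ordinary KLR cuspidal theory, but with additional care at the central block $\gamma_0$ where the steadied quotient must also be accommodated.
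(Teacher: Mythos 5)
Your proposal matches the paper's proof in all essential respects: the forward surjection $\Rz{\Lambda}{\al}e(\boldsymbol{\gamma})\twoheadrightarrow N$, the use of \cref{lem:leq-shuffle} to factor it through $\Delta(\boldsymbol{\gamma})$, the reduction of injectivity to the claim that the relations of $\Cg{\gamma}$ (non-cuspidal idempotents on the outer blocks, steadied-kernel elements on $\gamma_0$) are carried by $\iota$ into the higher-slope submodule, and the extraction of $\End(\Delta(\boldsymbol{\gamma}))^{\operatorname{op}}\cong\Cg{\gamma}$ from the cyclic structure together with $e(\boldsymbol{\gamma})N\cong\Cg{\gamma}$. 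The only cosmetic difference is that the paper packages the injectivity step as an explicit inverse map $N\to\Delta(\boldsymbol{\gamma})$ built from the right $\Cg{\gamma}$-action on $e(\boldsymbol{\gamma})\Delta(\boldsymbol{\gamma})$, rather than characterizing $\ker\phi$ directly, but both hinge on the same observation about $\iota(\ker\pi)$ factoring through idempotents higher in the pre-order.
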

    
\begin{proof}
Sending $e(\boldsymbol{\gamma})$ to the identity element in the ring $\Cg{\gamma}$ induces a surjective map $\R{\PR}{\Lambda}{\al}e(\boldsymbol{\gamma})\to  C_{\gamma_{-s}}\circ \cdots \circ  \R{\PR}{\Lambda}{\gamma_0}\circ \cdots \circ C_{\gamma_{r}}$, 
By \cref{lem:leq-shuffle}, all idempotents with non-zero image in the target are $\leq \boldsymbol{\gamma}$, so this map factors through the quotient $\Delta(\boldsymbol{\gamma}).$   

On the other hand, the image $e(\boldsymbol{\gamma})\Delta(\boldsymbol{\gamma})$ is naturally a module over $\Rg{\gamma}$.  An element of the non-cuspidal ideal factors through an idempotent which is higher in the partial order, by definition.  Similarly, an element of the kernel of the map $\Rz{\Lambda}{\gamma_0}\to \R{\PR}{\Lambda}{\gamma_0}$ factors through an idempotent higher in this order. Thus, such elements are zero in $e(\boldsymbol{\gamma})\Delta(\boldsymbol{\gamma})$, and so the action on this space factors through the quotient $\Cg{\gamma}$.  Thus, acting on the image of $e(\boldsymbol{\gamma})$ induces a surjective map $C_{\gamma_{-s}}\circ \cdots \circ  \R{\PR}{\Lambda}{\gamma_0}\circ \cdots \circ C_{\gamma_{r}}\to \Delta(\boldsymbol{\gamma})$, which must be inverse to the map constructed above. 

This shows that $e(\boldsymbol{\gamma})\Delta(\boldsymbol{\gamma})\cong \End(\Delta(\boldsymbol{\gamma}))^{\operatorname{op}}$ is exactly $\Cg{\gamma}$, using \cref{lem:leq-shuffle} again.  Thus, the right action by the elements of this quotient induces the desired isomorphism.  
\end{proof}

We can extend this result to show that 
\[M_{-s}\circ \cdots \circ M_{r}\cong \Delta(\boldsymbol{\gamma})\otimes_{\Cg{\gamma}} (M_{-s}\otimes \cdots \otimes M_{r}).\]
This gives a fully-faithful functor $\Cg{\gamma}\mmod \to \Rz{\Lambda}{\alpha}$ of {\bf standardization}.
In particular, we can choose a simple $\Cg{\gamma}$-module $L$, and the resulting proper standard \[\bar{\Delta}(L)=\Delta(\boldsymbol{\gamma})\otimes_{\Cg{\gamma}}L.\]  Unlike the module $\Delta(\boldsymbol{\gamma})$, this module is finite-dimensional.
There is a unique indecomposable summand ${\Delta}(L)$ of $\Delta(\boldsymbol{\gamma})$ which maps surjectively to $\bar{\Delta}(L)$; this is the standardization of the projective cover of $L_p$.  
 
 Let $V$ be a graded vector space with each degree finite-dimensional.  We let $V^*$  denote the restricted dual, the vector space of functionals on $V$ which are 0 on all but finitely many degrees;  this is the same as the direct sum of the duals taken degree-wise.  
 We'll also work with the costandard modules $\nabla(\boldsymbol{\gamma})=(\Cg{\boldsymbol{\gamma}}\otimes_{\Rg{\boldsymbol{\gamma}}}\Rz{\Lambda}{\al})^*$;  note that, unlike the $ \Delta(\boldsymbol{\gamma})$, these modules are not finitely generated.    We have $\End(\nabla(\boldsymbol{\gamma}))^{\operatorname{op}}\cong \Cg{\gamma}$, so $\Hom(N,\nabla(\boldsymbol{\gamma}))$ is a right $\Cg{\gamma}$-module.

We note that we obtain analogues of \cite[Th. 3.12-3]{brundanHomologicalProperties2014} with essentially identical proofs.  By analogy, we define a $\Delta$-flag to be a filtration with all subquotients isomorphic to ${\Delta}({L})$ for some ${L}$ and a $\bar{\Delta}$-flag to be a filtration with all subquotients isomorphic to $\bar{\Delta}({L})$ for some ${L}$.
\begin{thm}\label{thm:standard-standard}
If $e(\bell,\Br)M=0$ for all $\boldsymbol{\gamma}(\bell,\Br)\not \leq \boldsymbol{\gamma}$, then $\Ext^d(\bar{\Delta}({L}), M)=0.$ 
Furthermore, 
\[\Ext^d(\bar{\Delta}({L}),\nabla(\boldsymbol{\gamma}))=\begin{cases}
    L^* & d=0\\
    0 & d>0
\end{cases}
    \]
A $\R{\PR}{\Lambda}{\al}$-module $N$ has a $\bar{\Delta}$-flag if and only if $\Ext^d(N,\nabla(\boldsymbol{\gamma}))=0$ for all $d>0$ and all $\boldsymbol{\gamma}$.  In this case, $N$ is filtered by the standardizations of the left $\Cg{\gamma}$-modules $N_{\boldsymbol{\gamma}}=\Hom(N, \nabla(\boldsymbol{\gamma}))^*$.  
\end{thm}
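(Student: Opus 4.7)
This theorem is the slope-data analogue of \cite[Th.\ 3.12--3]{brundanHomologicalProperties2014}, so the plan is to follow their outline with the role of the cuspidal algebras played by $\Cg{\gamma}$ and the cuspidal filtration replaced by the slope-data filtration given by \cref{lem:leq-shuffle}.  The three assertions will be proven in order, each relying on the previous ones.

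For the first assertion, I interpret $L$ as a simple $\Cg{\gamma'}$-module with $\boldsymbol{\gamma}'\not\leq \boldsymbol{\gamma}$ (the interesting case; if $\boldsymbol{\gamma}'\leq \boldsymbol{\gamma}$ the support condition on $M$ gives no information).  I would begin by showing that the standardization functor $N\mapsto \Delta(\boldsymbol{\gamma}')\otimes_{\Cg{\gamma'}} N$ is exact.  This follows from \cref{lem:leq-shuffle}: we have $e(\boldsymbol{\gamma}')\Delta(\boldsymbol{\gamma}')\cong \Cg{\gamma'}$ as $\Cg{\gamma'}$-bimodules, and the other weight spaces of $\Delta(\boldsymbol{\gamma}')$ are free $\Cg{\gamma'}$-modules indexed by shuffle data, making $\Delta(\boldsymbol{\gamma}')$ flat over $\Cg{\gamma'}$.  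Taking a projective resolution $P_\bullet\to L$ over $\Cg{\gamma'}$ and applying standardization gives a resolution of $\bar{\Delta}(L)$ whose terms all have nonzero $e(\boldsymbol{\gamma}')$-part.  Since $\boldsymbol{\gamma}'\not\leq \boldsymbol{\gamma}$ forces $e(\boldsymbol{\gamma}')M = 0$, the resulting $\Hom$ complex vanishes, and all $\Ext^d$ are zero.

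For the second assertion, the tensor--Hom adjunction gives
\[\Ext^d_{\R{\PR}{\Lambda}{\al}}\bigl(\bar{\Delta}(L),\nabla(\boldsymbol{\gamma})\bigr) \cong \Ext^d_{\Cg{\gamma}}\bigl(L,\Hom_{\R{\PR}{\Lambda}{\al}}(\Delta(\boldsymbol{\gamma}),\nabla(\boldsymbol{\gamma}))\bigr),\]
where passing to derived functors uses the flatness established in the previous step.  A direct calculation from the definition $\nabla(\boldsymbol{\gamma})=(\Cg{\gamma}\otimes_{\Rg{\gamma}}\R{\PR}{\Lambda}{\al})^{*}$ identifies the inner Hom with $\Cg{\gamma}^{*}$, which is an injective cogenerator of $\Cg{\gamma}\mmod$.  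Hence the Ext groups reduce to $L^{*}$ in degree zero and vanish in higher degrees, as required.

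For the third assertion, I would induct on the support of $N$ in the pre-order on slope data.  Given $N$ with $\Ext^{d}(N,\nabla(\boldsymbol{\gamma}))=0$ for all $d>0$ and all $\boldsymbol{\gamma}$, pick a maximal $\boldsymbol{\gamma}$ in the support of $N$.  Then $N_{\boldsymbol{\gamma}}=\Hom(N,\nabla(\boldsymbol{\gamma}))^{*}$ is naturally a $\Cg{\gamma}$-module, and one obtains a natural evaluation map $\bar{\Delta}(N_{\boldsymbol{\gamma}})\to N$ which is an isomorphism on the $e(\boldsymbol{\gamma})$-part and hence injective by full faithfulness of standardization.  Letting $N'$ denote the cokernel, its support lies strictly below $\boldsymbol{\gamma}$; a standard long-exact-sequence argument, combined with the first two assertions, shows that $N'$ inherits the vanishing hypothesis, so we can iterate to build the $\bar{\Delta}$-flag.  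The main technical obstacle will be justifying that standardization genuinely preserves $\Ext$ computations despite $\Delta(\boldsymbol{\gamma})$ being infinitely generated; this requires a careful adaptation of the flatness analysis in \cite[\S 3]{brundanHomologicalProperties2014} to our weighted setting using \cref{lem:leq-shuffle}, and is where most of the technical work will concentrate.
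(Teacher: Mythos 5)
The paper itself only gestures at a proof, saying the result follows by ``essentially identical proofs'' to \cite[Th.\ 3.12--13]{brundanHomologicalProperties2014}. Your attempt to flesh this out follows the right outline — $\Cg{\gamma}$ in place of the cuspidal algebras, the slope-data pre-order and \cref{lem:leq-shuffle} in place of the root-partition order — but it has genuine gaps exactly where the technical content of the BKM argument sits.

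The central gap is in your first assertion. You take a projective resolution $P_\bullet \to L$ over $\Cg{\gamma'}$, apply the (flat, hence exact) standardization functor to get an exact complex $\Delta(\boldsymbol{\gamma}')\otimes_{\Cg{\gamma'}} P_\bullet$ resolving $\bar\Delta(L)$, then apply $\Hom(-,M)$ and conclude. But flatness of $\Delta(\boldsymbol{\gamma}')$ over $\Cg{\gamma'}$ concerns the other variable of the tensor product; the terms $\Delta(\boldsymbol{\gamma}')\otimes_{\Cg{\gamma'}}P_n$ are not projective over $\R{\PR}{\Lambda}{\al}$, so applying $\Hom(-,M)$ to this resolution and taking cohomology does \emph{not} compute $\Ext^*_{\R{\PR}{\Lambda}{\al}}(\bar\Delta(L),M)$. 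To use it you would need the terms to be $\Hom(-,M)$-acyclic, i.e.\ you need $\Ext^k(\Delta(\boldsymbol{\gamma}'),M)=0$ for $k>0$, which is precisely the nontrivial statement. Proving that requires a dimension-shift through the kernel $K$ of $\R{\PR}{\Lambda}{\al}e(\boldsymbol{\gamma}')\twoheadrightarrow\Delta(\boldsymbol{\gamma}')$, which is generated by idempotents $e(\bell,\Br)$ with $\boldsymbol{\gamma}(\bell,\Br)\not\leq\boldsymbol{\gamma}'$. Under your interpretation $\boldsymbol{\gamma}'\not\leq\boldsymbol{\gamma}$ (rather than $\boldsymbol{\gamma}'=\boldsymbol{\gamma}$), these generators need not be killed by $M$, so $\Hom(K,M)$ need not vanish and the induction does not close; this is evidence that the intended reading, as in BKM, is $L$ a simple $\Cg{\gamma}$-module with the vanishing asserted for $d>0$, not your reading.

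The same issue recurs in the second assertion: the derived tensor--Hom adjunction only collapses to the displayed isomorphism once you know the inner $\mathbb{R}\Hom(\Delta(\boldsymbol{\gamma}),\nabla(\boldsymbol{\gamma}))$ is concentrated in degree zero, i.e.\ $\Ext^k(\Delta(\boldsymbol{\gamma}),\nabla(\boldsymbol{\gamma}))=0$ for $k>0$. Again this is the substantive step, and flatness alone does not give it. Finally, for the third assertion, the natural map $\bar\Delta(N_{\boldsymbol{\gamma}})\to N$ is not an instance of full faithfulness of standardization (which is about morphisms between standardizations, not into arbitrary modules); it has to be constructed from the adjunction and shown to be injective using parts (1)--(2). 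You flag this as the technically heavy step, which is fair, but the sketch as written does not yet bridge it. In short: the strategy is correct, but the acyclicity computations that BKM actually carry out are the heart of the argument and cannot be elided by appealing to flatness.
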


In particular, this means that any projective $\R{\PR}{\Lambda}{\al}$-module has a $\Delta$-flag. 
A natural extension of this result is that there are right standard modules as well, and if $M$ is a right module with a $\Delta$-flag and $N$ a left module with a $\bar \Delta$-flag, then $\operatorname{Tor}^d(M,N)=0$ for $d>0$.  In particular, for all $k>0$:
\begin{lem}\label{lem:Tor-flag}
    If $N$ has a $\bar \Delta$-flag, then \[\operatorname{Tor}^k_{\Rz{\Lambda}{\al}}(\R{\PR}{\Lambda}{\al}, N)=0 \qquad\qquad  \Ext^k_{\Rz{\Lambda}{\al}}(\R{\PR}{\Lambda}{\al},N^*)=0.\]   
If the action on $N$ factors through  $\R{\PR}{\Lambda}{\al}$ then $\R{\PR}{\Lambda}{\al}\Lotimes_{\Rz{\Lambda}{\al}} N=N$.   
\end{lem}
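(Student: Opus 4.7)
The plan is to reduce to the case $N = \bar{\Delta}(L)$ and then invoke the right-module analog of \cref{thm:standard-standard} mentioned in the paragraph immediately preceding the lemma.

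The key structural observation is that $\R{\PR}{\Lambda}{\al}$, viewed as a right $\Rz{\Lambda}{\al}$-module, already carries a (trivial) right $\Delta$-flag. Indeed, since $\ideal{\PR}$ is generated as a two-sided ideal by the idempotents $e(\bell,\Br)$ with $\boldsymbol{\gamma}(\bell,\Br) \neq (\alpha)$, we obtain a direct sum decomposition
\[\R{\PR}{\Lambda}{\al} = \bigoplus_{e(\bell,\Br)\,:\,\boldsymbol{\gamma}(\bell,\Br)=(\alpha)} e(\bell,\Br)\R{\PR}{\Lambda}{\al}\]
as right $\Rz{\Lambda}{\al}$-modules. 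For each such $e = e(\bell,\Br)$, the summand $e\R{\PR}{\Lambda}{\al} = e\Rz{\Lambda}{\al}/e\ideal{\PR}$ is precisely the right standard module for the minimum slope datum $(\alpha)$ based at $e$, since the right submodule $e\ideal{\PR}$ coincides with the submodule of $e\Rz{\Lambda}{\al}$ generated by elements factoring through idempotents of strictly higher slope datum.

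With this in hand, the Tor vanishing $\operatorname{Tor}^k_{\Rz{\Lambda}{\al}}(\R{\PR}{\Lambda}{\al},N) = 0$ for $k > 0$ is immediate from the right-Tor vanishing stated in the preceding paragraph, applied to the right $\Delta$-filtered module $\R{\PR}{\Lambda}{\al}$ and the $\bar{\Delta}$-filtered left module $N$. For the $\Ext$ statement, I would use the standard dualization
\[\Ext^k_{\Rz{\Lambda}{\al}}(M, V^*) \cong \operatorname{Tor}^k_{\Rz{\Lambda}{\al}}(\bar{V}, M)^*,\]
where $\bar{V}$ denotes $V$ regarded as a right $\Rz{\Lambda}{\al}$-module via the diagrammatic anti-involution (vertical reflection) on KLRW diagrams. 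Since that anti-involution interchanges left and right standards, a $\bar{\Delta}$-flag on $N$ becomes a right $\bar{\Delta}$-flag on $\bar{N}$, and the Tor vanishing now applies with the roles reversed.

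Finally, the derived tensor product statement follows formally: the higher Tor vanishing forces $\R{\PR}{\Lambda}{\al} \Lotimes_{\Rz{\Lambda}{\al}} N$ to be concentrated in degree zero with cohomology $\R{\PR}{\Lambda}{\al} \otimes_{\Rz{\Lambda}{\al}} N$, and when the action on $N$ factors through the quotient the ordinary tensor product is just $N$ by the standard fact that $A/I \otimes_A N \cong N$ for any $A/I$-module $N$. The main obstacle is setting up the right-module counterpart of \cref{sec:Standardizations} cleanly enough to assert both the right-Tor vanishing and the identification of $\R{\PR}{\Lambda}{\al}$ with a sum of right standards. This is not deep, but requires a parallel development of right cuspidal decompositions, right proper standards, and the duality pairing compatible with the restricted dual.
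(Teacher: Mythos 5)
Your proposal is correct in outline and takes essentially the same route the paper implies: the lemma is stated as an immediate consequence of the paragraph just before it, and your contribution is to make explicit the fact that the paper leaves implicit, namely that $\R{\PR}{\Lambda}{\al}$ as a right $\Rz{\Lambda}{\al}$-module is a direct sum of right standard modules for the minimal slope datum $(\alpha)$, hence trivially right $\Delta$-filtered, which lets the stated right-$\Delta$ versus left-$\bar\Delta$ Tor vanishing apply. Your identification of $e\ideal{\PR}$ with the right submodule of elements factoring through idempotents of strictly larger slope datum is the right justification for that.

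One small wrinkle: your Ext argument is more roundabout than it needs to be, and the duality formula as written is not quite consistent in its chirality (if $V$ is a left module and $\bar V$ the right module obtained via the anti-involution, then $\operatorname{Tor}(\bar V, M)$ with $M$ a left module does typecheck, but $\Ext(M,V^*)$ with $M$ a left module and $V^*$ the right-module dual of a left module does not). There is no need for the anti-involution at all: in the paper's conventions $N^*$ is a right $\Rz{\Lambda}{\al}$-module, $\R{\PR}{\Lambda}{\al}$ is also a right module, and the plain tensor-hom duality
\[
\Ext^k_{\Rz{\Lambda}{\al}}\bigl(\R{\PR}{\Lambda}{\al},\,N^*\bigr)\;\cong\;\bigl(\operatorname{Tor}^{\Rz{\Lambda}{\al}}_k(\R{\PR}{\Lambda}{\al},\,N)\bigr)^*
\]
(for a right module in the first slot and a left module inside the Tor) reduces the Ext vanishing to the Tor vanishing you already established, with no change of sides. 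The remainder of your argument, including the formal deduction of $\R{\PR}{\Lambda}{\al}\Lotimes_{\Rz{\Lambda}{\al}}N\cong N$ when $N$ is an $\R{\PR}{\Lambda}{\al}$-module, is fine.
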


\subsection{Finite-dimensionality}

\begin{lem}
   The algebra $\R{\PR}{\Lambda}{\alpha}$ is finite-dimensional.
\end{lem}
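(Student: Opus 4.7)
The plan is to show finite-dimensionality by combining the combinatorial finiteness of surviving idempotents with a KLRW straightening argument that reduces diagrams modulo the relations.

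First, I would observe that only finitely many straight-line idempotents $e(\bell, \Br)$ are non-zero in $\R{\PR}{\Lambda}{\alpha}$. The sequences $\bell$ and $\Br$ are drawn from the finite set $I$, have combined length bounded by $h(\alpha)$, and are further restricted by the steadied relations to those for which each prefix of $\alpha_\bell$ has non-negative pressure and each prefix of $\alpha_\Br$ has non-positive pressure. This bounds the number of non-zero idempotents by a finite count depending only on $I$ and $h(\alpha)$.

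Next, for each pair of non-zero idempotents $e_1, e_2$, I would exhibit a finite spanning set for $e_1 \R{\PR}{\Lambda}{\alpha} e_2$. Using the KLR-type straightening relations \eqref{dotcross}--\eqref{triple}, every diagram can be rewritten as a linear combination of diagrams in which each pair of strands crosses at most once and dots occupy a canonical position on each strand. The cyclotomic relation \eqref{cyclotomic} directly bounds the number of dots on any strand that can be pulled to a position just outside the red strand. To bound dots on the remaining strands, one inductively pulls strands toward the red strand using the crossing relations and invokes the cyclotomic relation. Crucially, the steadied relations kill any intermediate configurations where strands would need to move to a position representing an unsteady idempotent, which both terminates the induction and gives the required finiteness.

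The main obstacle is the careful bookkeeping of the correction terms produced by \eqref{bigon} and \eqref{dotcross}: each pull of a strand produces correction diagrams with fewer crossings but possibly rearranged dots, and we must ensure these terms either lie in our candidate spanning set or factor through an idempotent in $\ideal{\PR}$. A useful perspective is given by the slope data of \cref{sec:slope-data}: the steadied relations force all surviving diagrams to have slope datum $(\alpha)$, while any intermediate configuration with a non-trivial slope datum (corresponding to strands escaping the red strand) becomes zero in the quotient. Thus the straightening terminates on diagrams of bounded crossings and dots, yielding a finite spanning set and so finite-dimensionality.
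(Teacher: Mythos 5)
The proposal captures the right high-level intuition — use the steadied relations to kill escaping configurations, then reduce to bounding dots — but the core of the argument, the claimed termination of the straightening procedure, has a genuine gap that the paper's proof spends most of its length closing.

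The issue is that the relations \eqref{bigon} and \eqref{triple} resolve crossings at the cost of \emph{adding} dots. So straightening to a minimal-crossing form does not, by itself, bound the number of dots; at that point one needs a mechanism that kills high powers of dots. The relation \eqref{cyclotomic} only applies to a strand sitting immediately next to the red strand, and moving a strand into that position by sliding it over other black strands re-introduces crossings (hence, after resolving those, more dots again). Your statement that these intermediate terms ``either lie in our candidate spanning set or factor through an idempotent in $\ideal{\PR}$'' is exactly what must be proven, and it is not automatic: a diagram may have strands crossing the red line without factoring through a killed \emph{straight-line} idempotent. The paper does not try to build a finite spanning set directly. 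Instead, it reduces to showing that each dot is nilpotent (since the algebra is finitely generated over the dot subalgebra), then runs an induction on idempotents ordered by the auxiliary functions $g_{\bell}^-,g_{\Br}^+$, pulling a whole cuspidal group of strands over the red strand. The two technical inputs that make the induction close are the identity $\varphi_{\spadesuit}^*\varphi_{\spadesuit}=\pm\prod Q_{i_p i_q}(y_p,y_q)$ (a Kang--Kashiwara style computation) showing that this product lands in an ideal generated by ``higher'' idempotents, and McNamara's result that differences $y_p-y_{p'}$ are nilpotent in the semi-cuspidal quotient $C_\gamma$. Neither is present in your sketch, and without them the straightening loop does not terminate.

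A smaller point: the observation that only finitely many idempotents survive is true but carries no weight, since it already holds in the infinite-dimensional algebra $\Rz{\Lambda}{\alpha}$. Similarly, while it is correct that surviving straight-line idempotents have trivial slope datum, the relevant issue is controlling diagrams (not idempotents) that temporarily move strands across the red line, and that requires precisely the $g^\pm$-induction and the cuspidal nilpotence in the paper's argument.
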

\begin{proof}
{\bf Reduction to nilpotence of dots}: Since $e(\bell,\Br)\Rz{\Lambda}{\alpha}$ is finitely generated as a module over $e(\bell,\Br)\Rz{\Lambda}{\alpha}e(\bell,\Br)$, the same is true of $e(\bell,\Br)\R{\PR}{\Lambda}{\alpha}$ as a module over $e(\bell,\Br)\R{\PR}{\Lambda}{\alpha}e(\bell,\Br)$.  Thus, we need only to prove that the latter algebra is finite-dimensional.  Furthermore, since $e(\bell,\Br)\R{\PR}{\Lambda}{\alpha}e(\bell,\Br)$ is finitely generated as a module over the subalgebra generated by the dots, we need only to show that the subalgebra generated by the dots is finite-dimensional, or equivalently that any dot on one of the strands in $e(\bell,\Br)$ is nilpotent. 

{\bf Defining $g$'s:} In order to perform the needed induction, we need to define variations on the functions $f_{\Bi}^{\pm}$ for $\Bi=(i_1,\dots, i_k)$ which don't require the weak increasing/decreasing property.  Let 
\begin{align*}\mathbf{E}_x&=\{(x_1,x_2,\epsilon)\in [0,r]\times [0,r]\times  [0,1] \mid  \epsilon x_1+(1-\epsilon)x_2=x\}\\
    g_{\Bi}^+(x)&=\begin{cases}
    	\max\left(\epsilon f_{\Bi}(x_1)+(1-\epsilon)f_{\Bi}(x_2) \mid (x_1,x_2, \epsilon)\in \mathbf{E}_x\right) & x\in [0,k]\\
    	\frac{\PR(\al_{\Bi})}{k}x& x\geq k
    \end{cases}\\
    g_{\Bi}^-(x)&=\begin{cases}\min\left(\epsilon f_{\Bi}(x_1)+(1-\epsilon)f_{\Bi}(x_2) \mid (x_1,x_2, \epsilon)\in \mathbf{E}_x\right) &x\in [0,k]\\
    	\frac{\PR(\al_{\Bi})}{k}x & x\geq k
\end{cases}
\end{align*}  
The function $g_{\Bi}^{+}$  is the least (resp.\ greatest) convex function greater than (resp.\ less than) $f_{\Bi}$ on its domain of definition, and then extended by the line connecting this point to the origin from the end of its domain.   Similarly, the function $g_{\Bi}^{-}$ is the greatest concave function less than $f_{\Bi}$ on its domain of definition, and then extended by the same line.  In both cases, this extension will almost always break the convexity or concavity, but is helpful for our induction.
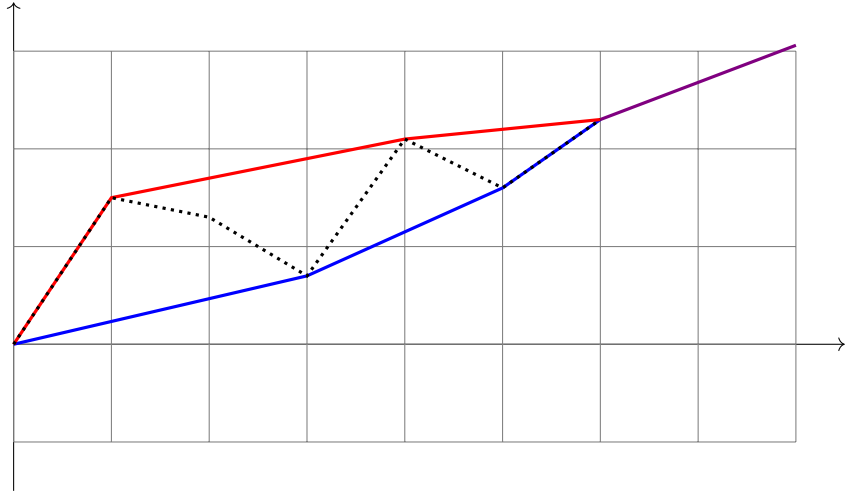
\begin{figure}\centerline{   \begin{tikzpicture}[scale=1.3]
\draw[->] (0, 0) -- (8.5, 0) ;
        \draw[->] (0, -1.5) -- (0, 3.5) ;

\draw[gray, thin] (-0, -1) grid (8, 3);

\draw[very thick, violet!100!black](6,2.3)--(8,3.06);
        \draw[very thick,blue] (0,0) -- (3,.7) -- (5,1.6)--(6,2.3);
        
        \draw[very thick,red!100!black] (0,0) -- (1,1.5) -- (4,2.1)--(6,2.3);
                \draw[very thick,dotted]
            (0, 0) -- (1, 1.5)
            -- (2, 1.3)
            -- (3, .7)
            -- (4, 2.1)
            -- (5, 1.6)-- (6,2.3);
    \end{tikzpicture}
    }    \caption{An illustration of the definitions of $g_{\Bi}^{\pm}$.  The dotted black line is the graph of $f_{\Bi}$, the graph of $g_{\Bi}^+$ is in red, that of $g_{\Bi}^-$ in blue.  Note that these coincide for $x\geq k$, shown in purple.}
\end{figure}

{\bf Setting up induction:} We'll prove this inductively on the choice of $(\bell,\Br)$; assume that $e(\bell',\Br')\R{\PR}{\Lambda}{\alpha}e(\bell',\Br')$ is finite-dimensional whenever \begin{enumerate}
    \item   $g_{\bell'}^-\geq g_{\bell}^-$    
    \item $g_{\Br'}^+\leq g_{\Br}^+$  
    \item there is a point where one of these inequalities is strict.
\end{enumerate}  
For simplicity, we'll call idempotents satisfying this condition ``higher'' than $(\bell,\Br)$

We can complete the proof with the assumption that the dot in question is left of the red strand; the argument in the other direction is symmetric.  

Note that if $(\bell',\Br')$ is higher than $(\bell,\Br)$, then know that $e(\bell,\Br)\R{\PR}{\Lambda}{\alpha}e(\bell',\Br')$ is finitely generated over $e(\bell',\Br')\R{\PR}{\Lambda}{\alpha}e(\bell',\Br')$, and thus is finite-dimensional, so the same is true of the product $e(\bell,\Br)\R{\PR}{\Lambda}{\alpha}e(\bell',\Br')\R{\PR}{\Lambda}{\alpha}e(\bell,\Br)$.  If we let $I$ be the two-sided ideal generated by all such products, we can conclude $I$ is finite-dimensional, so we need only show that $\mathsf{R}=e(\bell,\Br)\R{\PR}{\Lambda}{\alpha}e(\bell,\Br)/I$ is finite-dimensional.

{\bf Sliding over the red strand:} As discussed before, the function $g_{\bell}^-$ must be weakly decreasing, or the idempotent will be 0 by \eqref{steadiedidemp}.  Much like the definition of $u_i^{\pm}$ in \cref{sec:slope-data}, we can define $v_i^{\pm}$ by letting $v_0^{\pm}=0$, and $v_{m+1}^{\pm}$ to be the next $x$-value $\geq v_{m}^{\pm}$ where the slope of $g_{\Bi}^{\pm}$ changes.  

Let $D_t$ be the subalgebra in $\mathsf{R}$ generated by dots on the strands between the $(v_{t-1}^{\pm}+1)$-st to the $v_{t}^{\pm}$-th, counted from right to left.  We'll prove that the subalgebra generated by $D_1,\dots, D_t$ is finite-dimensional by induction on $t$.

Assuming that the $r+1$st through $s$th strands are the $t$th group, acting on this group of strands gives a map of
$R_{\gamma}$, where $\gamma=\alpha_{i_{v_{t-1}^{\pm}+1}}+\cdots+ \alpha_{i_{v_{t}^{\pm}}}$ is given by $\mathsf{R}$.  This map factors through the cuspidal quotient $C_{\gamma}$.  This means that $\mathsf{R}=0$ unless $\gamma$ is a root.  

Furthermore, we can consider the diagram $\varphi_{\spadesuit}$ which is defined by crossing the strands in the $t$th group over the red strand and all black strands between them (the 1st through $(t-1)$st groups), with every crossing of a pair of strands with the same label replaced by $\wp=\tikz[scale=.3,baseline]{\Xnw{0}{-1}{}{}\dm{3}{-1}\Xsw{4}{-1}{}{}}$.  If we compose this diagram with its reflection $\varphi_{\spadesuit}^*$ through a horizontal plane, then we can apply \cite[Lem. 1.5]{kangSymmetricQuiver2018} to show that 
\begin{equation}\label{eq:varphi-spade}
    \varphi_{\spadesuit}^*\varphi_{\spadesuit}=\pm \prod_{\substack{r<p\leq s\\ s<q\\ i_p\neq i_q}} Q_{i_{p} i_{q}}(y_p,y_q).
\end{equation}  The sign appears because the reflection of $\wp$ is $-\wp$.

{\bf Comparing functions:} The top $(\bell',\Br')$ of $\varphi_{\spadesuit}$ is given by the idempotent where we have moved the group of strands $(i_{r+1},\dots, i_s)$ to the end of $\Br$, that is, $\Br'=(j_1,\dots, j_u,i_s,\dots, i_{r+1})$.  
Since the segment between $r$ and $s$ is replaced with segments with positive slope, and the ratio $\PR(\al_{\bell'})/h(\al_{\bell'})>\PR(\al_{\bell})/h(\al_{\bell})$ increases, we have $g^-_{\bell}\leq g_{\bell'}^+$.

The function $g_{\Br'}$ is the extension of $g_{\Br}$ by the function determined by $(i_s,\dots, i_{r+1})$.  By assumption, this graph stays above a line of slope $\PR(\gamma)/(s-r)>0$; this value is positive since $g_{\bell}^-$ is increasing.  
Combined with the fact that $\PR(\al_{\Br'})/h(\al_{\Br'})>\PR(\al_{\Br})/h(\al_{\Br})$, this shows that $g^+_{\Br}\leq g_{\Br'}^+$.

 If $\PR(\alpha_{\Br}+\gamma)>0$, then our diagram is zero by \eqref{steadiedidemp}.  Otherwise, since $g_{\Br}^+$ is decreasing, there must be some $x< u$ such that $g_{\Br}^+(x)=\chi(\alpha_{\Br}+\gamma)$.  We must have $ g_{\Br'}^+(x)>\chi(\alpha_{\Br}+\gamma)$, since this function is strictly decreasing as well, so $g_{\Br}^+$ and $g_{\Br'}^+$ are not equal.  This shows that in $\mathsf{R}$, the left-hand side, and thus also the right-hand side, of \eqref{eq:varphi-spade} vanishes.  

{\bf Deducing nilpotence:} By \cite[Lem. 15.4]{mcnamaraRepresentationsKhovanov2017}, in the cuspidal quotient $C_{\gamma}$, the difference $y_p-y_{p'}$ is nilpotent in $D_t$ for all $p,p'\in [r+1,s]$.  Thus, if we let  \[\mathfrak{Q}(u)=\prod_{\substack{r<p\leq s\\ s<q\\ i_p\neq i_q}} Q_{i_{p} i_{q}}(u,y_q)\]
then for any $p\in [r+1,s]$, we have that $\mathfrak{Q}(y_p)$ is nilpotent in $\mathsf{R}$.   Each of the coefficients of this polynomial in $u$, with the exception of the leading coefficient, is a positive degree element of the subalgebra generated by $D_1,\dots, D_{t-1}$.  By induction, all these elements are nilpotent, so this proves that $y_p$ is also nilpotent in $D_t$.  This completes the proof.  
\end{proof}

\begin{example}\label{ex:4 graphs-2}
Let us return to the case of \cref{ex:4 graphs-1} and show the functions $g_{\Br}^{+},g_{\bell}^-$ much as we did with $f_{\Br}^{+},f_{\bell}^-$.  Recall that
$e=2$,  $\alpha= 3\delta$ and 
    $\chi(\alpha_0) = -3/2$, $\chi(\alpha_1)=1/2$. We draw the graph of 
    \[g(x)=\begin{cases}
        g_{\Br}^{+}(x)& x\geq 0\\
        g_{\bell}^-(-x)& x\leq 0
    \end{cases}\] in yellow in the graphs below:

\begin{equation*}
   \begin{tikzpicture}[scale=0.5,baseline=3mm]
        \idm{0}{0}{1} \idm{1}{0}{1} \red{2}{0} \idm{3}{0}{0} \idm{4}{0}{1} \idm{5}{0}{0}
        \idm{6}{0}{0}
    \end{tikzpicture}\quad \mapsto \quad  \begin{tikzpicture}[scale=0.5,baseline=-2mm]
\draw[<->] (-6.5, 0) -- (6.5, 0) ;
        \draw[<->] (0, -5.5) -- (0, 3.5) ;

\draw[gray, thin] (-6, 3) grid (6, -5);

\draw[very thick,yellow!100!black] (-6,3) --(0,0) -- (3, -2.5) -- (4, -4) -- (5,-5);
        \draw[very thick,violet!100!black] (-2,1)-- (0,0) -- (1,-1.5) -- (2, -3) -- (3, -2.5) -- (4, -4);
\node at (10,0){$\gamma_0=3\delta$};
    \end{tikzpicture}
\end{equation*}
\begin{equation*}
   \begin{tikzpicture}[scale=0.5,baseline=3mm]
      \idm{0}{0}{1} \idm{1}{0}{1} \red{3}{0} \idm{2}{0}{0} \idm{4}{0}{1} \idm{5}{0}{0}
        \idm{6}{0}{0}
    \end{tikzpicture}\quad \mapsto \quad  \begin{tikzpicture}[scale=0.5,baseline=-2mm]
\draw[<->] (-6.5, 0) -- (6.5, 0) ;
        \draw[<->] (0, -5.5) -- (0, 3.5) ;

\draw[gray, thin] (-6, 3) grid (6, -5);

\draw[very thick,yellow!100!black](-6,-1)--(-3,-0.5) -- (0,0) -- (6,-5);
        \draw[very thick,violet!100!black] (-3,-0.5)--(-2,1)-- (0,0) -- (1,-1.5) -- (2, -3) -- (3, -2.5);
\node at (10,1){$\gamma_{-1}=\al_1+\delta$};
\node at (10,0){$\gamma_0=\al_0+\delta$};
    \end{tikzpicture}
\end{equation*}
\begin{equation*}
   \begin{tikzpicture}[scale=0.5,baseline=3mm]
      \idm{0}{0}{1} \idm{2}{0}{0} \red{1}{0} \idm{3}{0}{0} \idm{4}{0}{1} \idm{5}{0}{0}
        \idm{6}{0}{1}
    \end{tikzpicture}\quad \mapsto \quad  \begin{tikzpicture}[scale=0.5,baseline=-2mm]
\draw[<->] (-6.5, 0) -- (6.5, 0) ;
        \draw[<->] (0, -5.5) -- (0, 3.5) ;

\draw[gray, thin] (-6, 3) grid (6, -5);

        \draw[very thick,yellow!100!black](-6,3)--(0,0) -- (1,.5)--(3,-.5)--(5,-3.5)--(6, -4.2);
        \draw[very thick,violet!100!black] (-1,0.5)-- (0,0) -- (1,.5) -- (2, -1) -- (3, -.5)--(4,-2)--(5,-3.5);
        
\node at (10,-1){$\gamma_{1}=\al_1$};
\node at (10,0){$\gamma_0=\al_0+2\delta$};

    \end{tikzpicture}
\end{equation*}
\begin{equation*}
   \begin{tikzpicture}[scale=0.5,baseline=3mm]
      \idm{0}{0}{0} \idm{1}{0}{1}  \idm{2}{0}{0} \red{3}{0} \idm{4}{0}{1} \idm{5}{0}{0}
        \idm{6}{0}{1}
    \end{tikzpicture}\quad \mapsto \quad  \begin{tikzpicture}[scale=0.5,baseline=-2mm]
\draw[<->] (-6.5, 0) -- (6.5, 0) ;
        \draw[<->] (0, -5.5) -- (0, 3.5) ;

\draw[gray, thin] (-6, 3) grid (6, -5);

        \draw[very thick,yellow!100!black](-6,-5)--(-3,-2.5)--(-1,-1.5)--(0,0) -- (1,.5)--(3,-.5)--(6,-1);
        \draw[very thick,violet!100!black] (-3,-2.5)--(-2,-1)--(-1,-1.5)-- (0,0) -- (1,.5) -- (2, -1) -- (3, -.5);
        
\node at (10,-1){$\gamma_{1}=\al_1$};
\node at (10,0){$\gamma_0=\delta$};
\node at (10,1){$\gamma_{-1}=\delta$};
\node at (10,2){$\gamma_{-2}=\al_0$};
    \end{tikzpicture}
\end{equation*}
\end{example}

\section{The categorical action}
\label{sec:The categorical action}

\subsection{Induction and restriction functors}

Recall that $\mathcal{I}(\chi)\subset \Rz{\Lambda}{\alpha}$ is the kernel of the map to $\R{\PR}{\Lambda}{\alpha}$; since in our discussion below we will consider different values of $\alpha$, we will include them in the notation and write $\mathcal{I}(\chi,\alpha)$.  

\begin{defn}
	We call $\beta$ {\bf right-addable} if the image of $\mathcal{I}(\chi,\alpha)\otimes 1_{R_{\beta}}$  under the horizontal composition lies in $\mathcal{I}(\chi,\alpha+\beta)$, and {\bf left-addable} if the image of $1_{R_{\beta}}\otimes \mathcal{I}(\chi,\alpha)$ lies in $\mathcal{I}(\chi,\alpha+\beta)$.  
\end{defn}
By \cref{lem:only-roots}, it follows immediately that whether $\beta$ is left- or right-addable only depends on the alcove in which $\chi$ lies.  One natural class of these roots are those which define the facets of the alcove---up to sign, these are the simple roots for the basis which is positive on the alcove.  We will always choose the root which is in the usual positive roots, and call these {\bf $\PR$-simple}. We call such a root {\bf positive/negative $\PR$-simple} depending on the sign of $\PR$.  
\begin{lem}\label{lem:addable-roots}
	If $\beta$ is positive (resp. negative)  $\PR$-simple, then $\beta$ is left (resp. right) addable and the map $R_{\beta}\otimes \Rz{\Lambda}{\alpha}\to  \Rz{\Lambda}{\alpha+\beta}$ induces a map $C_{\beta}\otimes \R{\PR}{\Lambda}{\alpha}\to  \R {\PR}{\Lambda}{\alpha+\beta}. $
\end{lem}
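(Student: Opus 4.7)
The plan is to establish the two assertions of the lemma in turn: first the left-addability of $\beta$, and then the factoring of the induced map through $C_\beta$. For addability, by \cref{lem:only-roots} it suffices to consider generators $e(\bell,\Br)$ of $\ideal{\PR,\alpha}\subset\Rz{\Lambda}{\alpha}$ in which some outer prefix of $\bell$ has weight a positive root $\gamma$ with $\PR(\gamma)<0$, or some outer prefix of $\Br$ has weight a positive root with $\PR>0$. Horizontal composition with $1_{R_\beta}$ on the left leaves $\Br$ untouched, so the right-prefix case is automatic; the crux is to show that for any sequence $\Bi$ with $\alpha_{\Bi}=\beta$ and any left-prefix generator, the concatenated idempotent $e((\Bi,\bell),\Br)$ lies in $\ideal{\PR,\alpha+\beta}$.

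I will establish this via the slope-function framework of \cref{sec:slope-data}: since $\PR(\beta)>0$, the function $f_{(\Bi,\bell)}$ rises by $\PR(\beta)$ over the $\Bi$-portion and then continues with a translated copy of $f_\bell$. The fact that $\beta$ is positive $\PR$-simple -- so that $\beta$ defines a facet of the alcove containing $\PR$ and cannot be written as a sum of smaller $\PR$-positive positive roots -- forces $f_{(\Bi,\bell)}^-$ to remain strictly negative somewhere whenever $f_\bell^-$ was; then \cref{lem:only-roots} extracts a root-prefix witnessing $e((\Bi,\bell),\Br)\in\ideal{\PR,\alpha+\beta}$. For the factoring, $\ker(R_\beta\to C_\beta)$ is generated by idempotents $e(\Bi\Bj)$ with $\alpha_{\Bi}\preceq\alpha_{\Bj}$, and since $\alpha_{\Bi}+\alpha_{\Bj}=\beta$ has positive slope, the $\preceq$-inequality combined with the $\PR$-minimality of $\beta$ constrains $\alpha_{\Bi}$ to have non-positive pressure; a similar slope argument on the concatenated idempotent $e((\Bi,\Bj,\bell),\Br)$ then produces a negative-pressure root-prefix, so its image in $\Rz{\Lambda}{\alpha+\beta}$ lies in $\ideal{\PR,\alpha+\beta}$ and the induced map $C_\beta\otimes\R{\PR}{\Lambda}{\alpha}\to\R{\PR}{\Lambda}{\alpha+\beta}$ exists.

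The main obstacle I anticipate is the combinatorial step inside the first part. The translation from the alcove-facet property of $\beta$ to the required prefix statement is delicate: in affine type, $\beta+\gamma$ need not itself be a positive root, so the ideal-producing prefix may need to be located within $\Bi$ alone, within a portion of $\bell$, or at an intermediate cut. Resolving this will likely require the extended functions $g^\pm$ introduced in the finite-dimensionality proof of \cref{sec:Standardizations}, together with the convex geometry of the alcove, to force the required concave-decreasing majorant to dip strictly below zero. Once this combinatorial input is in hand, both assertions of the lemma follow from the slope analysis sketched above.
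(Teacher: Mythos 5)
The proposal correctly identifies the starting point (reduce via \cref{lem:only-roots}, note that right-prefix generators are untouched by left concatenation), but it misses the single observation that makes the paper's proof go through: because $\beta$ is $\PR$-simple, it cuts out a facet of the alcove containing $\PR$, and by \cref{lem:only-roots} the ideal $\mathcal{I}(\chi,\alpha)$ depends only on that alcove; hence one may slide $\PR$ toward the $\beta$-facet and assume $\PR(\beta) < |\PR(\gamma)|$ for every root $\gamma$ in the finite set that can appear as a prefix-weight at $\alpha$. With that normalization, if $\gamma$ is a negative-pressure root prefix of $\bell$, the concatenated prefix of weight $\beta+\gamma$ automatically has $\PR(\beta+\gamma)<0$, so the new idempotent is killed outright — no root-prefix needs to be ``extracted,'' because the relations \cref{steadiedidemp} kill an idempotent whenever \emph{any} prefix has the wrong sign, not only root prefixes. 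Your worry that $\beta+\gamma$ may fail to be a root in affine type is therefore a non-issue; \cref{lem:only-roots} is a statement about generators of the ideal, and you never need to invert it.

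Without the shrinking step your slope-function plan has a real gap: for a generic $\PR$ in the alcove, $\PR(\beta)$ need not be small, so $\PR(\beta+\gamma)$ can be nonnegative even when $\PR(\gamma)<0$, and the concatenated graph $f_{(\Bi,\bell)}$ can stay nonnegative over its entire length. The $g^\pm$ apparatus does not by itself ``force the concave majorant below zero''; you must first normalize $\PR$. The same normalization is what drives the factorization through $C_\beta$ in the paper: if $\bell'=\bell_1'\bell_2'$ with $\alpha_{\bell'}=\beta$ and $\PR(\beta)<|\PR(\alpha_{\bell_k'})|$ for $k=1,2$, the two pieces necessarily have opposite pressure, and the $\preceq$-smaller piece has negative pressure, placing the shuffle idempotent in the ideal. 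Your appeal to ``$\PR$-minimality'' gestures at this but conflates the facet condition with a ready-made inequality; the inequality only becomes available after explicitly moving $\PR$ within the alcove.
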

\begin{proof}
Since the argument is symmetric, consider the case where $\beta$ is positive $\PR$-simple.  By \cref{lem:only-roots}, the ideal $\mathcal{I}(\chi,\alpha)$ only depends on the alcove of $\PR$, so we can without loss of generality assume that for any finite set $\{\beta_1,\dots, \beta_g\}$, we have $\PR(\beta)<|\PR(\beta_i)|$ for $i=1,\dots, g$.

	The ideal $\mathcal{I}(\chi,\alpha) $ is generated by the idempotents $e(\bell,\Br)$ where $\alpha_{(\ell_1,\dots, \ell_{i})}$ is a negative pressure root and those where $\alpha_{(r_1,\dots, r_{j})}$ is a positive pressure root.  The latter kind of generators are unchanged by horizontal multiplication on the left.  For the former, we can take the leftmost strands corresponding to $\beta$ and those with labels $\alpha_{(\ell_1,\dots, \ell_{i})}$ to obtain $\beta+ \alpha_{(\ell_1,\dots, \ell_{i})} $.  Since only finitely many roots can appear as $\alpha_{(\ell_1,\dots, \ell_{i})} $ for fixed $\alpha$, we can assume that $\PR(\beta)<|\PR(\alpha_{(\ell_1,\dots, \ell_{i})})|$ whenever $\alpha_{(\ell_1,\dots, \ell_{i})} $ is a negative pressure root.  Thus $\PR(\beta+ \alpha_{(\ell_1,\dots, \ell_{i})})<0, $ and so the image lies in $\mathcal{I}(\chi,\alpha+\beta). $
	
Finally, we need to show that the map of $R_{\beta}$ factors through $C_{\beta}$. If we write an idempotent $\bell'$ with $\alpha_{\bell'}=\beta$ as the concatenation $\bell'=\bell_1'\bell_2'$, we can assume that $\PR(\beta)<|\PR(\alpha_{\bell'_k})|$ for $k=1,2$, which is only possible if these have opposite sign.  Thus, if we have $\PR(\alpha_{\bell_1'})/h(\alpha_{\bell_1'})<\PR(\alpha_{\bell_2'})/h(\alpha_{\bell_2'}) $, we must have $\PR(\alpha_{\bell_1'}) <0$, so the horizontal composition of $e(\bell')$ with any diagram in $\Rz{\Lambda}{\alpha}$ lies in $\mathcal{I}(\chi,\alpha+\beta)$. This shows the desired factorization.
\end{proof}

 We wish to construct a categorical action of $\mathfrak{sl}_2$ on the categories $\R{\PR}{\Lambda}{\alpha}\mmod$ for each $\PR$-simple root $\beta$.  This action is analogous to that on categories of modules over cyclotomic quotients using induction and restriction functors induced by the non-unital inclusions from \cref{lem:addable-roots}.  Let $1_{\beta,\alpha}$ be the image of the identity of $C_{\gamma}\otimes \R{\PR}{\Lambda}{\alpha}$ under this map and define
\[\Ind{\alpha}{\alpha+\beta}(M)=\R{\PR}{\Lambda}{\alpha+\beta}1_{\beta,\alpha}\otimes_{R_{\beta}\otimes \R{\PR}{\Lambda}{\alpha}} (\Delta(\beta)\boxtimes M). \]
\[\Res{\alpha+\beta}{\alpha}(N)=\Hom_{R_{\beta}}(\Delta(\beta),1_{\beta,\alpha}N) \]
For a negative $\PR$-simple root, we can define similar functors by reversing the roles of left and right.  

\subsection{Categorical $\mathfrak{sl}_2$-actions}
Consider the direct sums 
\begin{equation}\label{eq:Ccat-def}
    \Ccat_r(\Lambda)=\bigoplus_{\langle\Lambda-\alpha,\beta^{\vee}\rangle=r}\R{\PR}{\Lambda}{\alpha}\mmod\qquad \qquad \Ccat(\Lambda)=\bigoplus_{r\in \Z}\Ccat_r(\Lambda).
\end{equation}  We can define autofunctors of this direct sum:
\[\eF=\bigoplus_{\alpha}\Ind{\alpha}{\alpha+\beta}\colon \Ccat_r(\Lambda)\to \Ccat_{r+2}(\Lambda)\qquad\qquad  \eE=\bigoplus_{\alpha}\Res{\alpha+\beta}{\alpha}\colon \Ccat_r(\Lambda)\to \Ccat_{r-2}(\Lambda)\]

Recall that two functors $\eE,\eF$ define a categorical $\mathfrak{sl}_2$ action if 	
\begin{enumerate}
\item[(KM1)] there is a prescribed adjunction
$(\eF,\eE )$;
\item[(KM2)]
For $m \geq 0$ there is an action of the
nilHecke algebra  $NH_m$ with $m$ strands on the $m$th power functor $\eE^m $;
\item[(KM3)]
If $r\geq 0$, there is an isomorphism $\eF\eE|_{\Ccat_{r}}\oplus \operatorname{Id}_{\Ccat_{r}}^{\oplus r}\Rightarrow
\eE\eF|_{\Ccat_{r}}
$, induced by the column vector
\begin{equation}
\left[
\begin{tikzpicture}[baseline = -1mm,thick]
	\draw[->] (0.28,-.28) to (-0.28,.28);
\draw[<-] (-0.28,-.28) to (0.28,.28);
\end{tikzpicture}
\quad
\mathord{
\begin{tikzpicture}[baseline = 1mm,thick]
	\draw[-] (0.4,.4) to[out=-90, in=0] (0.1,0);
	\draw[->] (0.1,0) to[out = 180, in = -90] (-0.2,.4);
\end{tikzpicture}
}
\quad
\mathord{
\begin{tikzpicture}[baseline = 1mm,thick]
	\draw[-] (0.4,.4) to[out=-90, in=0] (0.1,0);
	\draw[->] (0.1,0) to[out = 180, in = -90] (-0.2,.4);
\node at (0.37,.2) {$\bullet$};
\node at (0.57,.2) {$\scriptstyle x$};
\end{tikzpicture}
}
\quad
\cdots
\quad
\mathord{
\begin{tikzpicture}[baseline = 1mm,thick]
	\draw[-] (0.4,.4) to[out=-90, in=0] (0.1,0);
	\draw[->] (0.1,0) to[out = 180, in = -90] (-0.2,.4);
\node at (0.37,.2) {$\bullet$};
\node at (.87,.2) {$\scriptstyle x^{r-1}$};
\end{tikzpicture}
}
\right]^T
\end{equation}
 If $r\leq 0$, there is an isomorphism $\eE \eF|_{\Ccat_r}
\oplus \operatorname{Id}_{\Ccat_r}^{\oplus -r}\Rightarrow
\eF\eE|_{\Ccat_r}$ induced by the row vector
\begin{equation}
\left[
\mathord{
\begin{tikzpicture}[baseline = 0,thick]
	\draw[<-] (-0.28,-.3) to (0.28,.3);
\draw[->] (0.28,-.3) to (-0.28,.3);
   \end{tikzpicture}
}\quad 
\mathord{
\begin{tikzpicture}[baseline = 1mm,thick]
	\draw[-] (0.4,0) to[out=90, in=0] (0.1,0.4);
      \node at (-0.15,0.45) {$\phantom\bullet$};
	\draw[->] (0.1,0.4) to[out = 180, in = 90] (-0.2,0);
\end{tikzpicture}
}\quad
\mathord{
\begin{tikzpicture}[baseline = 1mm,thick]
	\draw[-] (0.4,0) to[out=90, in=0] (0.1,0.4);
	\draw[->] (0.1,0.4) to[out = 180, in = 90] (-0.2,0);
      \node at (-0.15,0.45) {$\phantom\bullet$};
      \node at (-0.15,0.2) {$\bullet$};
\node at (-0.37,.2) {$\scriptstyle x$};
\end{tikzpicture}
}\quad\cdots \quad 
\mathord{
\begin{tikzpicture}[baseline = 1mm,thick]
	\draw[-] (0.4,0) to[out=90, in=0] (0.1,0.4);
	\draw[->] (0.1,0.4) to[out = 180, in = 90] (-0.2,0);
\node at (-0.65,.3) {$\scriptstyle x^{-r-1}$};
      \node at (-0.15,0.42) {$\phantom\bullet$};
      \node at (-0.15,0.2) {$\bullet$};
\end{tikzpicture}
}\,
\right]
\end{equation}
\end{enumerate}

\begin{thm}\label{sl2action}
     For each $\PR$-simple root $\beta$, the functors $\eE,\eF$ define a categorical $\mathfrak{sl}_2$ action.  
\end{thm}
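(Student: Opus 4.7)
My plan is to verify each of (KM1)--(KM3) by adapting the standard arguments for categorical $\mathfrak{sl}_2$ actions on cyclotomic KLR quotients (after Brundan--Kleshchev, Kang--Kashiwara, Rouquier) to the steadied setting. The key simplifying input throughout is that a $\PR$-simple root $\beta$ is real, so by \cite[Th.~3.3]{brundanHomologicalProperties2014} the semi-cuspidal quotient $C_\beta$ is Morita equivalent to $\K[x]$ via its action on $\Delta(\beta)$. Thus $\Delta(\beta)$ plays the role a single $\beta$-strand plays in the classical story, its endomorphism ring supplies both the adjunction unit and the dot generator of the nilHecke action, and the non-unital inclusion $C_\beta \otimes \R{\PR}{\Lambda}{\alpha} \to \R{\PR}{\Lambda}{\alpha+\beta}$ produced by \cref{lem:addable-roots} is what makes $\eE$ and $\eF$ descend to the steadied quotient at all.

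Axiom (KM1) is then the tensor-hom adjunction: $\eF$ is tensor product with the bimodule $\R{\PR}{\Lambda}{\alpha+\beta}1_{\beta,\alpha}\otimes_{R_\beta}\Delta(\beta)$ and $\eE$ is the matching $\Hom$, so adjointness is formal once one checks (via \cref{lem:addable-roots}) that this bimodule is well-defined. For axiom (KM2), the nilHecke action on $\eE^m$ factors through the action of $R_{m\beta}$ on $m$ stacked $\beta$-strands; since $\beta$ is real, the cuspidal quotient $C_{m\beta}$ is Morita equivalent to a matrix ring over $NH_m$, and assembling this across all $\alpha$ (using \cref{lem:Tor-flag} to work without derived corrections on $\bar\Delta$-flagged modules) yields the coherent nilHecke action.

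The substance of the argument is axiom (KM3), the exchange relation. Here I would construct on each weight component $\Ccat_r(\Lambda)$ a Mackey-style short exact sequence comparing $\eE\eF$ and $\eF\eE$: view the two $\beta$-strands relevant to these composites as an $R_{2\beta}$-configuration inside $\R{\PR}{\Lambda}{\alpha+\beta}$ and filter by the relative order of the two strands. Composing this with the standard stratification of \cref{thm:standard-standard} shows that the two composites differ by a term supported on the sub-idempotent where the two $\beta$-strands collide and are absorbed into the red strand; transferring the cyclotomic-type relation \cref{cyclotomic} through the Morita equivalence $C_\beta\sim \K[x]$, the absorbed term computes $\langle\Lambda-\alpha,\beta^\vee\rangle = r$ as the number of times a dot can be placed on $\Delta(\beta)$ before the configuration enters $\mathcal{I}(\PR)$. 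This yields the required isomorphism $\eF\eE \oplus \mathrm{Id}^{\oplus r}\Rightarrow \eE\eF$ for $r\ge 0$ (and symmetrically for $r\le 0$), with the column/row vector of the statement arising naturally from the dot powers.

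The main obstacle will be axiom (KM3), specifically showing the Mackey filtration respects the tilted ideal $\mathcal{I}(\PR)$ rather than the cyclotomic ideal. This is where the $\PR$-simplicity of $\beta$ is used essentially: \cref{lem:addable-roots} guarantees that both the addition and the removal of a single $\beta$-strand are compatible with $\mathcal{I}(\PR)$, and \cref{lem:only-roots} reduces verification of the filtration to the finitely many generating idempotents in which two $\beta$-strands give nonzero elements of $\R{\PR}{\Lambda}{\alpha+2\beta}$. Once this compatibility is established, the classical computation of the exchange bimodule (as e.g.\ in Rouquier's $2$-Kac--Moody setup) goes through verbatim.
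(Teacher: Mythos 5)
The proposal handles (KM1) and (KM2) the same way the paper does, but for (KM3) it takes a genuinely different route, and there is a gap in that route that the paper deliberately avoids.

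The paper does not construct a Mackey-style natural exact sequence comparing $\eE\eF$ and $\eF\eE$ as bimodule functors. Instead, it reduces (KM3) to a purely Grothendieck-group statement: by \cite[Th.~5.27]{CR}, once (KM1) and (KM2) hold it suffices to check that $[\eE]$ and $[\eF]$ satisfy the $\mathfrak{sl}_2$-relations on $K_0(\Ccat)$. The paper then proves (\cref{lem:sl2-rels}) that for $M$ simple with $\eE M = 0$ one has $\eE\eF^{(n)}M \cong (\eF^{(n-1)}M)^{\oplus(k-n+1)}$, using the divided-power Koszul-type complex of \cref{lem:standard-resolution}: applying $\eE$ to that complex reduces the computation to the observation that the diagram $X^*X$ (two $\beta$-groups crossing over the red strand and back) acts on $\K[y]\otimes M$ by a scalar multiple of $y^k$ because $\End(M\otimes\K[y]) = \K[y]$ for $M$ simple. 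It then bootstraps via an inductive argument (using \cite[Prop.~5.20]{CR}) to show the $K_0$-relations on all of $K_0(\Ccat)$.

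Your plan of a direct Mackey filtration is what one does when $\beta$ is a simple root (as in Kang--Kashiwara or Brundan--Kleshchev). For a composite $\PR$-simple $\beta$, the difficulty is that the ``collision and absorption by the red strand'' you describe is not a single strand crossing governed by the local relation \eqref{redcrossing}; it is the whole cuspidal group of strands modeled by $\Delta(\beta)$ interacting with the red strand. The paper's key computation that $X^*X$ acts by $ay^k$ uses the simplicity of $M$ in an essential way to pin down the endomorphism; you have not indicated how to promote this to a natural transformation of bimodule functors independent of $M$, which is what a genuine Mackey sequence would require. That promotion is exactly the step that \cite[Th.~5.27]{CR} lets the paper skip. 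Without either filling that gap or invoking the Chuang--Rouquier $K_0$-criterion as the paper does, the argument for (KM3) is incomplete.
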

We'll require a long series of lemmas to prove this.  First we observe that:
\begin{lem}
    Properties (KM1) and (KM2) hold.
\end{lem}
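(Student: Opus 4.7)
The plan is to handle the two axioms separately, deriving both from a bimodule description of the two functors. For (KM1), I will rewrite $\eF$ as tensor product with a bimodule. Let
\[
B_{\alpha} = \R{\PR}{\Lambda}{\alpha+\beta}\,1_{\beta,\alpha} \otimes_{R_{\beta}} \Delta(\beta),
\]
which carries a natural $(\R{\PR}{\Lambda}{\alpha+\beta}, \R{\PR}{\Lambda}{\alpha})$-bimodule structure coming from multiplication on the left factor and the commuting right action of $\R{\PR}{\Lambda}{\alpha}$ encoded by $1_{\beta,\alpha}$ (which requires $\beta$ to be positive $\PR$-simple so that the image actually lies in $\R{\PR}{\Lambda}{\alpha+\beta}$, as secured by \cref{lem:addable-roots}). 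Unwinding the definition, $\eF(M) \cong B_{\alpha} \otimes_{\R{\PR}{\Lambda}{\alpha}} M$. Taking the right adjoint gives $\Hom_{\R{\PR}{\Lambda}{\alpha+\beta}}(B_{\alpha}, -)$, and standard tensor–hom manipulation with the idempotent $1_{\beta,\alpha}$ followed by the tensor–hom adjunction for $\otimes_{R_{\beta}}$ produces exactly $\Hom_{R_{\beta}}(\Delta(\beta), 1_{\beta,\alpha}(-)) = \eE$. The symmetric case of $\beta$ negative $\PR$-simple is identical with left and right interchanged.

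For (KM2), I will iterate the adjunction to identify $\eE^m$. Iterating the computation above gives
\[
\eE^m(N) \;\cong\; \Hom_{R_{\beta}^{\otimes m}}\bigl(\Delta(\beta)^{\boxtimes m},\, 1_{\beta,\alpha}\cdots 1_{\beta,\alpha+(m-1)\beta}\, N\bigr),
\]
which by the functoriality of induction and the universal property of the horizontal composition can be rewritten as $\Hom_{R_{m\beta}}(\Delta(\beta)^{\circ m}, 1_{m\beta,\alpha} N)$. Functoriality then gives an action of the opposite of $\End_{R_{m\beta}}(\Delta(\beta)^{\circ m})$ on $\eE^m$. The nilHecke action will come by pulling back along a homomorphism $NH_m \to \End(\Delta(\beta)^{\circ m})^{\operatorname{op}}$: dots are sent to a dot on the corresponding strand inside $R_{m\beta}$, and crossings to the corresponding KLR crossing element.

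The key technical input, and the main obstacle, is verifying that these endomorphisms satisfy the nilHecke relations. Since $\beta$ is $\PR$-simple it is a real root, so by the result cited after \cref{def:KLR} (\cite[Th. 3.3]{brundanHomologicalProperties2014}) the semicuspidal algebra $C_{\beta}$ is Morita equivalent to $\K[x]$ acting on $\Delta(\beta)$. This gives $\End_{R_\beta}(\Delta(\beta))^{\operatorname{op}} = \K[x]$ with the dot generator. From here I would invoke (or repeat) the standard argument of Khovanov–Lauda and Rouquier identifying the endomorphisms of the induction $\Delta(\beta)^{\circ m}$ of a cuspidal simple module with a matrix algebra over $NH_m$: the nilHecke quadratic and braid relations follow from the KLR quadratic and braid relations \crefrange{dotcross}{triple} applied to the $\beta$-labeled strand groups, using that the polynomials $Q_{ij}$ governing crossings of distinct-label strands are invertible modulo the cuspidal ideal on each $\beta$-block. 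This gives the desired $NH_m$ action and completes (KM2).
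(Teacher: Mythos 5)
Your treatment of (KM1) coincides with the paper's: it is the standard tensor--Hom adjunction, which you merely spell out via the bimodule $B_\alpha$.

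For (KM2) you take the same conceptual route as the paper---the nilHecke action comes from the endomorphism algebra of $\Delta(\beta)^{\circ m}$ via the cuspidal theory of the real root $\beta$---but one step, as stated, is wrong. You say the $NH_m$ crossings are sent to ``the corresponding KLR crossing element'' in $\End(\Delta(\beta)^{\circ m})^{\operatorname{op}}$. When $\beta$ is a real root that is not a simple root (the typical situation for a $\PR$-simple root), the raw KLR crossing of two adjacent $\beta$-blocks does \emph{not} square to zero: it squares to a nonzero polynomial in the dots, coming from the $Q_{ij}$ factors accumulated when the distinct-labelled strands in the two blocks pass over each other. One must renormalize the crossing, using precisely the invertibility-modulo-the-cuspidal-ideal of those $Q_{ij}$'s that you allude to, to obtain an operator $\tau$ with $\tau^2=0$ and the braid relations. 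This is the actual content of \cite[Lem.~3.6]{brundanHomologicalProperties2014}, which the paper cites directly rather than re-deriving. The paper also observes that the BKM construction is carried out there only for finite type and that the extension to real roots of affine type is \cite[Th.~24.1]{mcnamaraRepresentationsKhovanov2017}; since the main case of interest is $A^{(1)}_{e-1}$, that extension is not optional and you should cite it. Your closing remarks about $Q_{ij}$-invertibility and ``repeating the standard argument'' indicate you are aware of the normalization, but the proof should either define the renormalized $\tau$ explicitly or simply cite \cite[Lem.~3.6]{brundanHomologicalProperties2014} and \cite[Th.~24.1]{mcnamaraRepresentationsKhovanov2017} as the paper does; the literal assignment ``crossing $\mapsto$ KLR crossing'' does not give a homomorphism from $NH_m$.
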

\begin{proof}
    Property (KM1) is just the property that $\eF$ is the left adjoint of $\eE$; this is the standard tensor-Hom adjunction.

By \cite[Lem. 3.6]{brundanHomologicalProperties2014}, we have a natural transformation:
\[ \tau \colon\Ind{\alpha+\beta}{\alpha+2\beta}\circ \Ind{\alpha}{\alpha+\beta}\to \Ind{\alpha+\beta}{\alpha+2\beta}\circ \Ind{\alpha}{\alpha+\beta} \] which satisfies the relations of the nilHecke algebra:
\[x_1\tau-\tau x_2= \tau x_1-x_2\tau=1\qquad \tau^2=0\qquad \tau_{12}\tau_{23}\tau_{12}=\tau_{23}\tau_{12}\tau_{23}.\]  This defines the desired nilHecke action.  Note that this is only proven for the finite-type case in \cite{brundanHomologicalProperties2014}, but as noted in \cite[Th. 24.1]{mcnamaraRepresentationsKhovanov2017}, the same proof works for real roots of affine Lie algebras.
\end{proof}

From (KM2), we can define divided powers of $\eF$:
\begin{defn}
    Let $\Delta_{\beta}^{(n)}$ be the submodule $\bigcap_{i=1}^{n-1}\ker \tau_i$ of $\Delta_{\beta}^{n}$.  Let $\eF^{(n)}$ be the corresponding subfunctor of $\eF^n$.  

    Since $NH_n$ is a matrix algebra of rank $n!$, we have that $\Delta_{\beta}^{n}\cong (\Delta_{\beta}^{(n)})^{\oplus n!}$ and $\eF^{n}\cong (\eF^{(n)})^{\oplus n!}$.
\end{defn}

\subsection{The property (KM3)}
Thus, only (KM3) remains to check.

Consider a module $M$ over $\R{\PR}{\Lambda}{\alpha}$.
Since $\beta$ is positive $\PR$-simple, the induction $M\circ \Delta_{\beta}^{(k)}$ is a standardization.
This is not the case for $\Delta_{\beta}^{(k)}\circ M $;  however, we claim that this module has a ${\Delta}$-flag.  We can find this by considering the submodule $N_a$ generated by the diagram where we pull the left $a$ groups for $\beta$ right over the strands in $M$, so $N_{k,0}= \Delta_{\beta}^{(k)}\circ M   $ and $N_{k,k+1}=0$.  

\begin{lem}\label{lem:N-filtration}
If $M$ is projective,  $N_{k,a}/N_{k,a+1}\cong \eF^{(k-a)}M\circ \Delta_{\beta}^{(a)}$. 
\end{lem}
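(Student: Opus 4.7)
The plan is to realize the filtration $N_{k,\bullet}$ on $\eF^{(k)}M=\Delta_{\beta}^{(k)}\circ M$ as a Mackey-style filtration whose subquotients are pinned down by combining the standardization machinery of Section 3 with the fact that $\Delta_\beta$ is a real-root standard (so $\End(\Delta_\beta)\cong \K[x]$ via $C_\beta\cong\K[x]$).

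First I would construct, for each $a$, a homomorphism
\[
\phi_a : \eF^{(k-a)}M\circ\Delta_\beta^{(a)} \longrightarrow \eF^{(k)}M
\]
given by the crossing diagram that pulls the rightmost $a$ groups of $\beta$-strands leftward across $M$ to merge them with the other $k-a$ groups; the divided-power structures on the two sides match because the idempotent $\frac{1}{k!}\tau_{w_0}$ projecting $\Delta_\beta^k\twoheadrightarrow \Delta_\beta^{(k)}$ decomposes, under the parabolic subalgebra $NH_{k-a}\otimes NH_a\subset NH_k$, as the product of the two smaller divided-power idempotents together with the minimal coset representative for $\Sym_{k-a}\times\Sym_a\backslash \Sym_k$. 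By construction $\operatorname{im}(\phi_a)\subseteq N_{k,a}$, and taking $\bar\phi_a$ modulo $N_{k,a+1}$ gives a surjection since the generators of $N_{k,a}$ are (by definition) obtained from such crossings; further pulls of additional $\beta$-groups land in $N_{k,a+1}$.

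Injectivity of $\bar\phi_a$ is the substance of the argument. Since $M$ is projective, it has a $\Delta$-flag by \cref{thm:standard-standard}. Standardizations are exact on $\Cg{\boldsymbol{\gamma}}\mmod$, and horizontal composition with the real-root standard $\Delta_\beta^{(a)}$ on the right (resp.\ $\Delta_\beta^{(k-a)}$ on the left) preserves $\Delta$-flags by \cref{lem:leq-shuffle}; thus both source and target of $\bar\phi_a$ admit $\Delta$-flags whose subquotients are standardizations $\Delta(\boldsymbol{\gamma}')$ with $\gamma_1'=a\beta$ and $\gamma_0'$ coming from the flag of $M$ augmented by $(k-a)\beta$. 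Applying the exact functor $e(\boldsymbol{\gamma})(\cdot)$ for the appropriate slope datum and invoking \cref{lem:leq-shuffle} together with the Hom-computation of \cref{thm:standard-standard}, the injectivity statement reduces to the assertion that the horizontal composition
\[
C_{(k-a)\beta}\otimes C_{a\beta}\;\longrightarrow\; C_{k\beta}
\]
is injective on the divided-power summand cut out by the nilHecke idempotents on each side. This is a statement about polynomial rings (since $C_{n\beta}$ is controlled by $\K[x_1,\dots,x_n]^{\Sym_n}\cdot$nilHecke), and follows from the explicit description of divided powers.

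The main technical obstacle will be the bookkeeping between the nilHecke idempotents and the crossing diagram $\phi_a$: one must verify that the idempotent cutting out $\Delta_\beta^{(k)}$ on the left of $M$ and the pair of idempotents cutting out $\Delta_\beta^{(k-a)}\boxtimes\Delta_\beta^{(a)}$ on either side of $M$ are intertwined, up to $N_{k,a+1}$, by exactly $\phi_a$. Once this is established, the combination of surjectivity from Step 2 with the $\Delta$-flag/idempotent-slicing argument in Step 3 yields the claimed isomorphism.
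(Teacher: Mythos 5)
Your plan differs from the paper's in a fundamental way, and the difference is where the gap lives. The paper's proof establishes, as the central nontrivial step, that $\Delta_{\beta}^{(k)}\circ M$ has a $\bar\Delta$-flag with the right subquotients \emph{abstractly}, by computing $\mathbb{R}\Hom(\Delta_\beta^{(k)}\circ M,\nabla(\boldsymbol{\gamma}))$ and using the adjunction plus \cref{lem:Tor-flag} and projectivity of $M$ to kill the higher $\Ext$ groups; only then does it match this abstract flag to the explicit filtration $N_{k,\bullet}$ by a dimension count. Your plan tries to go the other direction: build the surjections $\bar\phi_a$ directly and then prove injectivity. That is a legitimate alternative strategy in principle, but your argument for injectivity does not close.

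Concretely, you justify the existence of $\Delta$-flags on the source and target of $\bar\phi_a$ by saying that ``horizontal composition with $\Delta_\beta^{(a)}$ on the right (resp.\ $\Delta_\beta^{(k-a)}$ on the left) preserves $\Delta$-flags by \cref{lem:leq-shuffle}.'' But \cref{lem:leq-shuffle} says nothing of the sort; it is a statement about which idempotents act nonzero on a shuffle. Composition on the right by a cuspidal $\Delta_\beta^{(a)}$ does produce a standardization, but composition on the \emph{left} by $\Delta_\beta^{(k-a)}$ is exactly what does \emph{not} obviously yield a $\Delta$-flag, and the paper explicitly flags this (``This is not the case for $\Delta_\beta^{(k)}\circ M$; however, we claim...''). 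Asserting it via a misattributed lemma is precisely the gap. Likewise, the reduction to injectivity of $C_{(k-a)\beta}\otimes C_{a\beta}\to C_{k\beta}$ on divided-power summands cannot be the whole story: the kernel of $\bar\phi_a$ \emph{a priori} depends on $M$ (projectivity must enter somewhere), and a statement purely about semi-cuspidal algebras doesn't see $M$. What you actually need is $\phi_a^{-1}(N_{k,a+1})=0$, which is strictly stronger than injectivity of $\phi_a$, and your outline does not isolate where projectivity of $M$ is used to get it. Two smaller issues: you identify $\eF^{(k)}M$ with $\Delta_\beta^{(k)}\circ M$, but the former is the quotient of the latter by $N_{k,1}$; and the nilHecke primitive idempotent is not $\frac{1}{k!}\tau_{w_0}$ (this is not integrally defined, and $\tau_{w_0}$ squares to zero)---the correct idempotent involves dots, e.g.\ $y_1^{k-1}\cdots y_{k-1}\tau_{w_0}$.
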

\begin{proof}
Consider $\mathbb{R}\Hom(\Delta_{\beta}^{(k)}\circ M,\nabla(\boldsymbol{\gamma}))$.  Note that this is 0 unless $\boldsymbol{\gamma} $ has the form $\gamma_0=\al+(k-b)\beta, \gamma_{1}=b\beta$, since otherwise $\operatorname{Res}_{\boldsymbol{\gamma}}(\Delta_{\beta}^{(k)}\circ M)=0$.  

In the case of $\gamma_0=\al+(k-b)\beta, \gamma_{1}=b\beta$, we have a natural map  $\Delta_{\beta}^{(k-b)}\circ M\otimes \Delta^{(b)}\to \operatorname{Res}_{\boldsymbol{\gamma}}(\Delta_{\beta}^{(k)}\circ M)$ given by attachment to the diagram
\begin{center}
\begin{tikzpicture}[yscale=0.7]
    \rect{0}{0}{2.5}{1}
    \rect{3}{0}{1.5}{1}
    \node at (1.25,0.5) {$\Delta_\beta^{(k)}$};
    \node at (3.75,0.5) {$M$};
    \draw[thick] (0.25,1) -- (0.25,3);
    \draw[thick] (0.5,1) -- (0.5,3);
    \draw[thick] (0.75,1) -- (0.75,3);
    \draw[thick] (1,1) -- (1,3);
    \draw[thick] (1.25,1) -- (1.25,3);
    \draw[thick] (1.5,1) -- (1.5,3);
    \draw[thick] (1.75,1) to [out =60, in =240] (4.25,3);
     \draw[thick] (2,1) to [out =60, in =240] (4.5,3);
      \draw[thick] (2.25,1) to [out =60, in =240] (4.75,3);

     \draw[thick] (3.25,1) to [out =90, in =270] (2,3);
     \draw[thick] (3.5,1) to [out =90, in =270] (2.25,3);
     \draw[thick] (3.75,1) to [out =90, in =270] (2.5,3);
     \draw[thick] (4,1) to [out =90, in =270] (2.75,3);
     \draw[thick] (4.25,1) to [out =90, in =270] (3,3);
\end{tikzpicture}
\end{center}
which is an isomorphism.  
Thus, 
\begin{multline*}
\Ext^k_{\Rz{\Lambda}{\alpha+k\beta}}(\Delta_{\beta}^{(k)}\circ M, \nabla(\boldsymbol{\gamma}))\cong \Ext^k_{\Rg{\boldsymbol{\gamma}}}(\Delta_{\beta}^{(k-b)}\circ M\otimes \Delta^{(b)}, \Cg{\boldsymbol{\gamma}}^*)\\
\cong \Ext^k_{\Rz{\Lambda}{\alpha+(k-b)\beta}}(\Delta_{\beta}^{(k-b)}\circ M, \R{\PR}{\Lambda}{\alpha+(k-b)\beta}^*)\otimes ( \Delta^{(b)})^*
\end{multline*}
By adjunction, we have 
\begin{multline*}
\Ext^k_{\Rz{\Lambda}{\alpha+(k-b)\beta}}(\Delta_{\beta}^{(k-b)}\circ M, \R{\PR}{\Lambda}{\alpha+(k-b)\beta}^*)\\\cong \Ext^k_{\Rz{\Lambda}{\alpha}}(M,\eE^{(k-b)}\R{\PR}{\Lambda}{\alpha+(k-b)\beta}^*)   \\
\cong \Ext^k_{\R{\PR}{\Lambda}{\alpha}}(M,\eE^{(k-b)}\R{\PR}{\Lambda}{\alpha+(k-b)\beta}^*) 
\end{multline*}
where the last isomorphism follows from \cref{lem:Tor-flag}.  Since $M$ is projective, this Ext-group vanishes for $k>0$.

By adjunction, this means that $ \Hom_{\Rz{\Lambda}{\alpha+k\beta}}(\Delta_{\beta}^{(k)}\circ M, \nabla(\boldsymbol{\gamma}))\cong (\eF^{(k-b)}M\otimes \Delta_{\beta}^{(b)})^*$.
Thus, \cref{thm:standard-standard} implies that $\Delta_{\beta}^{(k)}\circ M $  is filtered by the standardizations $ \eF^{(k-a)}M \circ\Delta_{\beta}^{(a)}$ for $a=0,1,\dots, k$.

    If we act on the generating element of $N_a$, this gives us a map $ \tilde{\eF}^{(k-a)}M\circ\Delta_{\beta}^{(a)}\to N_{k,a}$.  Furthermore, any element in the kernel of $\tilde{\eF}^{(k-a)}M\to {\eF}^{(k-a)}M$ must give an element of $N_{a+1}$.  Thus, we have a surjective map $ \eF^{(k-a)}M\circ \Delta_{\beta}^{(a)}\to N_a/N_{a+1}.$  
    
    Since we already know that a standard filtration by these modules exists, the dimension $\Delta_{\beta}^{(k)}\circ M $ in any graded degree is the sum of the graded dimensions of $\eF^{(k-a)}M\circ \Delta_{\beta}^{(a)}$; on the other hand, the same is true of the graded dimensions of $N_a/N_{a+1}$.  This is only possible if all the surjective maps $\eF^{(k-a)}M\circ \Delta_{\beta}^{(a)}\to N_a/N_{a+1}$ are isomorphisms.  
\end{proof}
\begin{lem}\label{lem:standard-resolution}
If $M$ is projective, we have a resolution 
	\begin{equation}\label{eq:divided power sequence}
	0\to M \circ \Delta_{\beta}^{(n)}  \to  \cdots \overset{\partial}\to \Delta_{\beta}^{(n-1)}\circ M \circ \Delta_{\beta} \to \Delta_{\beta}^{(n)}\circ M \to \eF^{(n)}M
	\end{equation}
 where the map $\partial$ acts on elements of $\Delta_{\beta}^{(n-k)}\circ M \circ \Delta_{\beta}^{(k)}$ attaching to the diagram     
\begin{center}
\begin{tikzpicture}[scale=0.7]
    \rect{0}{0}{2.5}{1}
    \rect{3}{0}{1.5}{1}
    \rect{5}{0}{2.5}{1}
    \node at (1.25,0.5) {$\Delta_\beta^{(n-k)}$};
    \node at (3.75,0.5) {$M$};
        \node at (6.25,0.5) {$\Delta_\beta^{(k)}$};
    \draw[thick] (0.25,1) -- (0.25,4);
    \draw[thick] (0.5,1) -- (0.5,4);
    \draw[thick] (0.75,1) -- (0.75,4);
    \draw[thick] (1,1) -- (1,4);
    \draw[thick] (1.25,1) -- (1.25,4);
    \draw[thick] (1.5,1) -- (1.5,4);
    \draw[thick] (1.75,1) to [out =60, in =240] (6.25,4);
     \draw[thick] (2,1) to [out =60, in =240] (6.5,4);
      \draw[thick] (2.25,1) to [out =60, in =240] (6.75,4);

     \draw[thick] (3.25,1) to [out =90, in =270] (2,4);
     \draw[thick] (3.5,1) to [out =90, in =270] (2.25,4);
     \draw[thick] (3.75,1) to [out =90, in =270] (2.5,4);
     \draw[thick] (4,1) to [out =90, in =270] (2.75,4);
     \draw[thick] (4.25,1) to [out =90, in =270] (3,4);
     
      \draw[thick] (5.5,1) to [out =90, in =270] (4.75,4);
     \draw[thick] (5.75,1) to [out =90, in =270] (5,4);
     \draw[thick] (6,1) to [out =90, in =270] (5.25,4);
     \draw[thick] (6.25,1) to [out =90, in =270] (5.5,4);
     \draw[thick] (6.5,1) to [out =90, in =270] (5.75,4);
     \draw[thick] (6.75,1) to [out =90, in =270] (6,4);
\end{tikzpicture}
\end{center}
\end{lem}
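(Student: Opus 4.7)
My plan is to establish the resolution in three stages: well-definedness of the differential, the condition $\partial^2 = 0$, and exactness.

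First I would check that $\partial$ is a well-defined map of $\R{\PR}{\Lambda}{\al+n\beta}$-modules. The diagrammatic attachment a priori produces an element of $\Delta_\beta^{n-k-1}\circ M\circ \Delta_\beta^{k+1}$, and one must verify that it actually lies in the submodules cut out by the nilHecke idempotents defining the divided powers $\Delta_\beta^{(n-k-1)}$ and $\Delta_\beta^{(k+1)}$. Using the characterization $\Delta_\beta^{(m)}=\bigcap_{i=1}^{m-1}\ker\tau_i$, this is a direct check that pre-composing with any nilHecke generator $\tau_i$ on the source kills the diagram, which follows from braid-type manipulations of the crossings together with the KLR relations \cref{dotcross,bigon,triple}.

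Next I would show $\partial_{k+1}\circ\partial_k=0$. Composing two consecutive differentials produces a diagram in which two separate $\beta$-groups from the leftmost factor pass across $M$ and combine inside $\Delta_\beta^{(k+2)}$. After sliding these two groups past each other using \cref{triple}, the resulting configuration is symmetric under exchange of the two groups; but $\Delta_\beta^{(k+2)}$ is the image of the full nilHecke antisymmetrizer $e_{k+2}$, and such symmetric configurations factor through $\tau_{k+1}$, so they vanish on the target.

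For exactness, I would argue by induction on $n$. The base $n=1$ is immediate from \cref{lem:N-filtration}: the filtration $N_{1,0}\supset N_{1,1}\supset 0$ on $\Delta_\beta\circ M$ has subquotients $\eF M$ and $M\circ\Delta_\beta$, giving the short exact sequence $0\to M\circ\Delta_\beta\to\Delta_\beta\circ M\to \eF M\to 0$. For the inductive step, note that every term of the complex is projective: since $M$ is projective, so is $M\circ\Delta_\beta^k$ (induction along a base-change that is flat over $C_\beta$, and $\Delta_\beta$ is a free $C_\beta$-module by Brundan--Kleshchev--McNamara), and $\Delta_\beta^{(k)}$ is a direct summand of $\Delta_\beta^k$ by the nilHecke idempotent $e_k$. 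Applying \cref{lem:N-filtration} to $\Delta_\beta^{(n)}\circ M$, the map to $\eF^{(n)}M$ is surjective with kernel $N_{n,1}$, and $N_{n,1}$ admits a filtration with subquotients $\eF^{(n-a)}M\circ\Delta_\beta^{(a)}$ for $a\geq 1$. To close the induction, I would identify $N_{n,1}$ with the cokernel of the differential one step to the left, i.e.\ with $\Delta_\beta^{(n-1)}\circ M\circ\Delta_\beta$ modulo the image from $\Delta_\beta^{(n-2)}\circ M\circ\Delta_\beta^{(2)}$, matching associated graded pieces under the two filtrations.

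The main obstacle will be making the exactness step precise: dimension-counting via graded characters shows the Euler characteristic of the complex agrees with the graded dimension of $\eF^{(n)}M$ (using that the terms of the filtration in \cref{lem:N-filtration} are known), but pinning down that the specific maps $\partial$ realize the filtration inclusions requires care. A cleaner route is to recognize the complex as the image, under the standardization functor, of a universal Koszul-type resolution of the trivial $NH_n$-module $e_n$ by modules of the form $NH_{n-k}\otimes NH_k$; projectivity of all our terms then lets us deduce exactness from the $NH_n$-level statement, which is a well-known computation in the nilHecke algebra.
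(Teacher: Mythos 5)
Your three-stage plan (well-definedness, $\partial^2=0$, exactness) is reasonable in outline, and the first two stages are routine checks the paper does not bother to spell out. The genuine gap is in the exactness step, which is where all the work actually lies.

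Your proposed induction on $n$ does not close. The inductive hypothesis for $n-1$ concerns the complex whose terms are $\Delta_\beta^{(n-1-k)}\circ M\circ\Delta_\beta^{(k)}$, but the truncation of the $n$-complex you want to analyze has terms $\Delta_\beta^{(n-k)}\circ M\circ\Delta_\beta^{(k)}$ for $k\ge 1$; these are not the same modules, and you do not supply a mechanism (a new $M$, a connecting map, a spectral sequence) that relates one to the other. Likewise, showing $N_{n,1}\cong\operatorname{coker}\bigl(\Delta_\beta^{(n-2)}\circ M\circ\Delta_\beta^{(2)}\to\Delta_\beta^{(n-1)}\circ M\circ\Delta_\beta\bigr)$ would only give exactness at the last two spots, not the whole complex. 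Your proposed ``cleaner route'' is also not well-founded: the terms $\Delta_\beta^{(n-k)}\circ M\circ\Delta_\beta^{(k)}$ are \emph{not} standardizations (the paper explicitly observes that $\Delta_\beta^{(k)}\circ M$ is merely $\Delta$-filtered, not a standardization), so there is no standardization functor to apply to a putative $NH_n$-level complex; and even if there were, ``all terms are projective'' does not by itself let you transport exactness along a functor.

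What the paper actually does is filter the \emph{entire complex} by the $N$-filtrations of \cref{lem:N-filtration}: $\partial$ sends $N_{n-k-1,a-1}\circ\Delta_\beta^{(k+1)}$ into $N_{n-k,a}\circ\Delta_\beta^{(k)}$, so the complex is filtered, and one passes to the associated graded. Each associated-graded term is a direct sum of copies of $\eF^{(n-k-a)}M\circ\Delta_\beta^{(k+a)}$ indexed by shuffles, and exactness of the graded complex follows from tracking which shuffles are hit by $\partial$ together with the Pascal identity $\binom{k+a}{a}=\binom{k+a-1}{a-1}+\binom{k+a-1}{a}$. Since a filtered complex whose associated graded is exact is itself exact, the lemma follows. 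This filtration-of-the-whole-complex step is the idea your proposal is missing; without it, the fact that individual terms admit $N$-filtrations with the right subquotients does not control the maps.
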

\begin{proof}
 Consider the filtration on $\Delta_{\beta}^{(n-k)}\circ M \circ \Delta_{\beta}^{(k)} $ induced by \cref{lem:N-filtration}.  This filtration has subquotients
  \[(N_{n-k,a}\circ \Delta_{\beta}^{(k)})/( N_{n-k,a+1}\circ\Delta_{\beta}^{(k)})\cong \eF^{(n-k-a)}M\circ\Delta_{\beta}^{(a)}\circ \Delta_{\beta}^{(k)}\cong ( \eF^{(n-k-a)}M\circ\Delta_{\beta}^{(k+a)})^{\oplus \binom{k+a}{a}}.\]
  We can realize the latter isomorphism by shuffling the groups of strands for the different copies of $\Delta_{\beta}$.  
  The map $\partial$ sends $ N_{n-k-1,a-1}\circ\Delta_{\beta}^{(k+1)}$ to $N_{n-k,a}\circ \Delta_{\beta}^{(k)}$, and thus we can consider the induced complex on the associated graded, where each term is a sum of copies of $\eF^{(n-k-a)}M\circ \Delta_{\beta}^{(k+a)}$.  Note that by the usual formula $\binom{k+a}{a}=\binom{k+a-1}{a-1}+\binom{k+a-1}{a}$, we have that this complex is exact if the image of the map \[\partial\colon( N_{n-k-1,a-1}\circ\Delta_{\beta}^{(k+1)})/( N_{n-k-1,a}\circ\Delta_{\beta}^{(k+1)})\to  ( N_{n-k,a}\circ\Delta_{\beta}^{(k)})/(N_{n-k,a+1}\circ \Delta_{\beta}^{(k)})\] 
  is a summand isomorphic to $\binom{k+a-1}{a-1}$ copies of $\eF^{(k-a)}M\circ \Delta_{\beta}^{(k+a)}$.  Indeed this image is exactly the summand corresponding to the shuffles where the leftmost element of the shuffle is from the set of size $a$.  This must be precisely the kernel since we know that a complementary summand must map injectively  to $ ( N_{n-k+1,a+1}\circ\Delta_{\beta}^{(k-1)})/( N_{n-k+1,a+2}\circ\Delta_{\beta}^{(k-1)})$, by our calculation of the size of the image.  
  
  Since a complex which is exact after taking associated graded is exact, this completes the proof.
\end{proof}
An important special case of this result is that for $M$ projective, we have 
    \begin{equation}
       \label{eq:n=1 sequence} M\circ \Delta_{\beta}\overset{X}\to \Delta_{\beta}\circ M\to \eF M\to 0,
    \end{equation}  
    where $X$ is the diagram  
\begin{center}
\begin{tikzpicture}
    \node at (-1,0) {$X=$};
    \permd{0}{0}{2}{}{}
    \permd{.25}{0}{2}{}{}
    \permd{0.5}{0}{2}{}{}
    \permd{1.75}{0}{-2}{}{}
    \permd{2}{0}{-2}{}{}
    \permd{2.25}{0}{-2}{}{}
    \rect{-0.25}{-1}{1}{1}
    \rect{1.5}{-1}{1 
    }{1}
    \node at (0.25,-0.5) {$M$};
    \node at (2,-0.5) {$\Delta_\beta$};
\end{tikzpicture}    
\end{center}
It follows immediately from  \cite[Lem. 4.18]{kangCategorificationHighest2012} that:
\begin{lem}
    The module $\eE \R{\PR}{\Lambda}{\alpha}$ is projective as a $\R{\PR}{\Lambda}{\alpha+\beta}$-module for all $\alpha$.  In particular, the functor $\eF$ is exact.  
\end{lem}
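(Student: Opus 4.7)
My plan is to invoke \cite[Lem.\ 4.18]{kangCategorificationHighest2012} and verify that its proof transcribes to our setting. The Kang--Kashiwara argument in the cyclotomic KLR case rests on exactly three ingredients: (a) the short exact sequence giving the cokernel presentation $M \circ \Delta_\beta \xrightarrow{X} \Delta_\beta \circ M \to \eF M \to 0$ for projective $M$; (b) the $(\eF,\eE)$-adjunction; and (c) the nilHecke action on $\eF^n$. These are, respectively, \eqref{eq:n=1 sequence}, property (KM1), and property (KM2) in our setting, with the role of their simple root $\alpha_i$ played uniformly by our $\PR$-simple root $\beta$.

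The argument then proceeds by specializing \eqref{eq:n=1 sequence} to $M = \R{\PR}{\Lambda}{\alpha}$, which is projective over itself, and running the Kang--Kashiwara diagrammatic analysis. This produces a direct sum decomposition exhibiting the bimodule representing $\eE$ as a summand of a free module, which is the desired projectivity. Exactness of $\eF$ then follows formally: tensor-Hom adjunction upgrades projectivity of the representing bimodule to biadjointness of $(\eE,\eF)$ up to a grading shift, and a biadjoint functor preserves all limits and colimits, hence is exact.

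The only potentially delicate point is to verify that the Kang--Kashiwara diagrammatic moves respect the steadying ideal $\ideal{\PR}$. This is the content of \cref{lem:addable-roots}: because $\beta$ is $\PR$-simple, the non-unital inclusions underlying $\eF$ and $\eE$ factor cleanly through the steadied quotient, so all manipulations that take place in a neighbourhood of the red strand descend to $\R{\PR}{\Lambda}{\alpha+\beta}$. Given this compatibility, the transcription is entirely routine, and I expect no further obstacles.
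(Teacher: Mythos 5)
Your proposal is correct and takes the same route as the paper, which gives no proof beyond the statement that the lemma follows immediately from Kang--Kashiwara \cite[Lem.\ 4.18]{kangCategorificationHighest2012}. You have correctly identified the prerequisites already in place---(KM1), (KM2), the sequence \eqref{eq:n=1 sequence} for projective $M$, and \cref{lem:addable-roots} ensuring the manipulations descend through $\ideal{\PR}$---and the deduction of exactness of $\eF$ from projectivity of the representing bimodule (via its dual being projective, hence flat, as a right module) is the standard and correct one.
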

It's easy to conclude from this that \cref{lem:N-filtration}, \cref{lem:standard-resolution} and \eqref{eq:n=1 sequence} hold even if $M$ is not projective, since all functors appearing are exact and all complexes are natural in $M$.

Of course, $\eE$ is already known to be exact, so both functors induce maps on the Grothendieck group $K_0(\Ccat)$.  We wish to prove that these satisfy the relations of $\mathfrak{sl}_2$.

\begin{lem}\label{lem:sl2-rels}
    Assume that $M$ is a simple $\R{\PR}{\lambda}{\alpha}$-module such that $\eE(M)=0$ and let $k=\langle\Lambda-\alpha,\beta^{\vee}\rangle $.  Then $\eE \eF^{(n)}(M)=\eF^{(n-1)}(M)^{\oplus k-n+1}$ if $n\leq k$ and $\eF^n(M)=0$ if $n>k$.  
\end{lem}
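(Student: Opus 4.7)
The plan is to induct on $n$, using only the categorical $\mathfrak{sl}_2$ structure established in \cref{sl2action}. For the base case $n=1$, observe that $\eE M = 0$ forces the weight $k = \langle \Lambda - \alpha, \beta^{\vee}\rangle$ to be nonnegative: otherwise the $r \leq 0$ form of (KM3) would give $\eE \eF M \oplus M^{\oplus -k} \cong \eF \eE M = 0$, contradicting $M \neq 0$. The $r \geq 0$ form of (KM3) then yields $\eE \eF M \cong \eF \eE M \oplus M^{\oplus k} = M^{\oplus k}$, matching the claim since $\eF^{(1)} = \eF$.

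For the inductive step, I would compute $\eE \eF \eF^{(n-1)} M$ in two ways. On one hand, since $NH_n$ is a matrix algebra of rank $n!$ whose minimal idempotent projects $\eF^n$ onto $\eF^{(n)}$, we have $\eF \circ \eF^{(n-1)} \cong (\eF^{(n)})^{\oplus n}$; consequently $\eE \eF \eF^{(n-1)} M \cong (\eE \eF^{(n)} M)^{\oplus n}$. On the other hand, applying (KM3) to $\eF^{(n-1)} M$ in weight $k - 2n + 2$, and invoking the inductive hypothesis $\eE \eF^{(n-1)} M \cong \eF^{(n-2)}(M)^{\oplus k-n+2}$ together with $\eF \circ \eF^{(n-2)} \cong (\eF^{(n-1)})^{\oplus n-1}$, the arithmetic identity $(n-1)(k-n+2) + (k - 2n + 2) = n(k-n+1)$ gives
\[
\eE \eF \eF^{(n-1)} M \cong (\eF^{(n-1)} M)^{\oplus n(k-n+1)},
\]
regardless of whether the weight $k - 2n + 2$ is nonnegative or nonpositive (the addition and subtraction in the two forms of (KM3) swap roles, but the net multiplicity is the same). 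Cancelling the factor of $n$ via the Krull--Schmidt property---valid since the algebras involved are finite-dimensional by the preceding subsection---gives $\eE \eF^{(n)} M \cong \eF^{(n-1)}(M)^{\oplus k - n + 1}$.

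For the vanishing $\eF^n M = 0$ when $n > k$: the derived formula evaluated at $n = k+1$ forces $\eE \eF^{(k+1)} M = 0$. Setting $N = \eF^{(k+1)} M$, which lives in weight $-k-2 < 0$, the $r \leq 0$ form of (KM3) yields $\eE \eF N \oplus N^{\oplus k+2} \cong \eF \eE N = 0$, forcing $N = 0$. Then $\eF^{k+1} M \cong (\eF^{(k+1)} M)^{\oplus (k+1)!} = 0$, and exactness of $\eF$ propagates this to $\eF^n M = 0$ for all $n > k$. The main obstacle is the Krull--Schmidt cancellation step, which depends on the finite-dimensionality established earlier; modulo this, the argument reduces to purely formal manipulations with (KM3), the nilHecke structure, and a divided-power arithmetic identity.
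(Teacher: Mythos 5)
Your argument is circular: it derives \cref{lem:sl2-rels} from (KM3), but in the paper's logical structure this lemma is a key step \emph{toward} proving (KM3). Look at the order of exposition: after verifying (KM1)--(KM2), the subsection heading reads ``The property (KM3)'' and opens with ``Thus, only (KM3) remains to check.'' Then come \cref{lem:N-filtration}, \cref{lem:standard-resolution}, \cref{lem:sl2-rels}, and only afterward the corollary ``For any $\PR$-simple root $\beta$, the property (KM3) holds,'' whose proof invokes \cref{lem:sl2-rels} together with \cite[Th.\ 5.27]{CR} to verify the $\mathfrak{sl}_2$-relations on $K_0(\Ccat)$. Your base case, inductive step, and vanishing claim each apply (KM3) at the relevant weight, so the whole chain assumes the conclusion you are meant to help establish. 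The theorem statement \cref{sl2action} appears before the lemma on the page, but its proof is not complete until after this corollary (``This completes the proof of \cref{sl2action}''), so you cannot cite it here.

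The computations themselves (the nilHecke identity $\eF\eF^{(n-1)}\cong (\eF^{(n)})^{\oplus n}$, the arithmetic $(n-1)(k-n+2)+(k-2n+2)=n(k-n+1)$, and the Krull--Schmidt cancellation) are fine as a \emph{formal consequence} of a categorical $\mathfrak{sl}_2$-action, but that is not what the lemma is needed for. The paper's actual proof works without (KM3): it applies $\eE$ to the exact sequence of \cref{lem:standard-resolution}. For $n=1$ this produces a two-term complex $\K[y]\otimes M\to\K[y]\otimes M\to\eE\eF M\to 0$ whose differential, by simplicity of $M$ and a degree count, must be multiplication by $ay^{k}$, giving $\eE\eF M\cong M^{\oplus k}$ directly. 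For $n>1$ it identifies $\eE\eF^{(n)}M$ explicitly by tracking complete symmetric polynomials $h_m(y_1,\dots,y_n)$ in the cyclotomic nilHecke algebra, again without invoking any $\mathfrak{sl}_2$-relation. You should replace your appeal to (KM3) with a direct analysis along these lines, using only the standardization machinery and nilHecke structure established in the preceding lemmas.
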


In order to describe this isomorphism, let us first discuss the corresponding result for the nilHecke algebra $NH_n^k$ on $n$ strands with a cyclotomic relation of degree $k$.  The submodule
\[NH_{(n)}^k=\{a\in NH_n^k\mid a\psi_i=0 \text{ for all } i\}\] is generated by the half-twist element, which has minimal degree.  We can think of this as a thick strand splitting into $n$ thin strands.  This projective module is generated over $\K[y_1,\dots, y_n]$ by this element, with the kernel generated by the complete symmetric polynomials $h_{k-n+m}(y_1,\dots, y_n)$ for $m=1,\dots,n$.  Thus restriction is just restricting this ring to the subring $\K[y_1,\dots, y_{n-1}]$.  The elements $1,y_n,\dots, y_{n}^{k-n}$ are a free basis of this module as an $NH_{n-1}^k$-module as is easily seen using the formula
\begin{equation}
	\label{eq:complete-symmetric}
h_{m}(y_1,\dots, y_n)=h_{m}(y_1,\dots, y_{n-1})+h_{m-1}(y_1,\dots, y_{n-1})y_n+\cdots +y_{n}^m.
\end{equation}

\begin{proof}
First consider the case $n=1$.  In this case, we apply $\eE$ to the sequence of \cref{lem:standard-resolution}.  The result is a complex of modules of $\K[y]\otimes \R{\PR}{\lambda}{\alpha} $ given by 
\begin{equation}
    \K[y] \otimes M \overset{X^*X}{\longrightarrow} \K[y] \otimes M\longrightarrow \eE\eF M\to 0
\end{equation}
for the diagram
\begin{center}
  \begin{tikzpicture}[scale=0.5]
\node at (-1.5,4) {$X^*X=$};
    \rect{0}{0}{2}{1} \rect{3}{0}{2}{1}
    \node at (1,0.5) {$\Delta_\beta$};
    \node at (4,0.5) {$M$};
       \draw[thick] (0.25,1) to [out =90, in =270] (3.25,4);
     \draw[thick] (1.75,1) to [out =90, in =270] (4.75,4);
     \draw[thick] (3.25,1) to [out =90, in =270] (0.25,4);
     \draw[thick] (4.75,1) to [out =90, in =270] (1.75,4);

  \draw[thick] (3.25,4) to [out =90, in =270] (0,8);
     \draw[thick] (4.75,4) to [out =90, in =270] (1.75,8);
     \draw[thick] (0.25,4) to [out =90, in =270] (3.25,8);
     \draw[thick] (1.75,4) to [out =90, in =270] (4.75,8);
\node at (1 ,1.5 ) {\small{$\dots$}};
\node at (4 ,1.5 ) {\small{$\dots$}};
\node at (1 ,4 ) {\small{$\dots$}};
\node at (4 , 4) {\small{$\dots$}};
\node at (1.2 , 7) {\small{$\dots$}};
\node at ( 3.8, 7) {\small{$\dots$}};
\end{tikzpicture}
\end{center}
Since $M$ is simple, $\End_{\K[y]\otimes \R{\PR}{\lambda}{\alpha}}(M\otimes \K[y] )=\K[y]$, so $XX^*$ must act by a polynomial which is non-zero (by the exactness of \cref{lem:standard-resolution}) and homogeneous of degree $k$  in $y$, since $X$ has degree $k$ in $\Rz{\Lambda}{\alpha+\beta}$, and $y$ is degree 2.  That is, it is of the form $ay^k$, for some $a\in \K$.  
  This implies that $\eE\eF M\cong M\otimes \K[y]/(y^k)\cong M^{\oplus k}$, as desired.  

Now, consider the case where $k\geq n>1$.  We can again apply $\eE$ to the sequence \cref{lem:standard-resolution}.  We obtain that $\eE \eF^{(n)}(M) $ is the quotient of $\eE(\Delta_{\beta}^{(n)}\circ M)\cong \K[y]\otimes \Delta_{\beta}^{(n-1)}M $ by the image of the map from $\eE(\Delta_{\beta}^{(n-1)}\circ M \circ \Delta_{\beta}) $. 

Let $U\subset \K[y]$ be the span of $\{1,\dots, y^{n-k}\}$; this is an $n-k+1$-dimensional subspace.  We have an induced natural map $U\otimes_{\K}\Delta_{\beta}^{(n-1)}\circ M \to \eE(\Delta_{\beta}^{(n)}\circ M) $.  When we compose with the quotient map to $\eE \eF^{(n)}(M) $, we obtain a map $U\otimes_{\K}\eF^{(n-1)}(M) \to \eE \eF^{(n)}(M) $.  This map is surjective, since standard nilHecke algebra calculations show that $h_m(y_1,\dots, y_n)=0$ for $m> k-n$,  and so \eqref{eq:complete-symmetric} shows how to write $y_n^m$ in terms of lower powers of $y_n$.  

Now, assume that we have an element of the kernel.  The image of any lift of this element to  $U\otimes_{\K}\Delta_{\beta}^{(n-1)}\circ M$ must map to the image of the map from $\eE(\Delta_{\beta}^{(n-1)}\circ M \circ \Delta_{\beta}) $.  Any diagram in this space must cap off either the rightmost set of strands, or a subset of the leftmost.  In the latter case, this diagram maps to 0 in $\K[y]\otimes \Delta_{\beta}^{(n-1)}M $, so we can subtract it from the lift, and assume that the image is a sum of diagrams of the former type.  That is, it is in the submodule generated by 
\begin{center}
\begin{tikzpicture}[yscale=0.7]
    \rect{0}{0}{2.5}{1}
    \rect{3}{0}{1.5}{1}
    \node at (1.25,0.5) {$\Delta_\beta^{(n)}$};
    \node at (3.75,0.5) {$M$};
    \draw[thick] (0.25,1) to[out =90, in =240] (1.25,5);
    \draw[thick] (0.5,1) to[out =90, in =240] (1.5,5);
    \draw[thick] (0.75,1) to[out =90, in =240] (1.75,5);
    \draw[thick] (1,1) to[out =90, in =240] (2,5);
    \draw[thick] (1.25,1) to[out =90, in =240] (2.25,5);
    \draw[thick] (1.5,1) to[out =90, in =240] (2.5,5);
    \draw[thick] (1.75,1) to [out =60, in =280] (4,3) to [out=100,in=-30] (-.25,5);
     \draw[thick] (2,1) to [out =60, in =280] (4.25,3) to [out=100,in=-30](.25,5);
      \draw[thick] (2.25,1) to [out =60, in =280] (4.5,3) to [out=100,in=-30] (.75, 5);
     \draw[thick] (3.25,1) to [out =90, in =240] (3,5);
     \draw[thick] (3.5,1) to [out =90, in =240] (3.25,5);
     \draw[thick] (3.75,1) to [out =90, in =240] (3.5,5);
     \draw[thick] (4,1) to [out =90, in =240] (3.75,5);
     \draw[thick] (4.25,1) to [out =90, in =240] (4,5);
\end{tikzpicture}
\end{center}
After simplification, this is the submodule generated by the complete symmetric polynomial $h_{n-k}(y_1,\cdots, y_n)$.  This ideal in the polynomials on $\C[y_1,\dots, y_n]$ is complementary to the subspace $U\otimes \C[y_1,\cdots, y_{n-1}]$, so this shows that the element of the kernel must be 0 and the map is injective.  
\end{proof}

\begin{cor}
For any $\PR$-simple root $\beta$, the property (KM3) holds.
\end{cor}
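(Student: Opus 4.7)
The plan is to assemble the natural transformations required by (KM3) from the data already at hand and verify they are isomorphisms by reducing to simples annihilated by $\eE$, where \cref{lem:sl2-rels} supplies an explicit model.

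For $r\geq 0$, define $\sigma\colon\eF\eE\Rightarrow\eE\eF$ as the mate, under the $(\eF,\eE)$-adjunction from (KM1), of the natural endomorphism of $\eE\eF$ given by the KLRW crossing (the same crossing that produces the map $X$ in \eqref{eq:n=1 sequence}). For $j=0,\dots,r-1$, let $\iota_j\colon\operatorname{Id}\Rightarrow\eE\eF$ be the unit of adjunction postcomposed with the $j$-th power of the dot endomorphism of $\eF$ furnished by (KM2). Packaging these yields the prescribed natural transformation $\Phi_r\colon\eF\eE\oplus\operatorname{Id}^{\oplus r}\Rightarrow\eE\eF$ on $\Ccat_r$; the case $r\leq 0$ is symmetric, using the $(\eE,\eF)$-mate and dots on $\eE$.

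Because $\eE$ and $\eF$ are both exact, the source and target of $\Phi_r$ are exact functors in $M$, so it suffices to check that $\Phi_r$ is an isomorphism on simple modules. By the cuspidal decomposition implicit in \cref{lem:leq-shuffle} and the discussion afterward (cf.\ \cite[Thm.\ 2.4]{tingleyMirkovicVilonenPolytopes2016}), every simple module $L\in\Ccat_r$ arises as the unique simple quotient of $\eF^{(n)}L_0$ for some simple $L_0$ with $\eE L_0=0$. On such a highest-weight $L_0$, the component $\sigma$ is automatically zero, so $\Phi_r$ reduces to $\iota_0\oplus\cdots\oplus\iota_{r-1}\colon L_0^{\oplus r}\to\eE\eF L_0$. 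The $n=1$ case of \cref{lem:sl2-rels} identifies $\eE\eF L_0$ with $L_0\otimes\K[y]/(y^r)$, and in this identification the map $\iota_j$ lands in the summand spanned by $y^j$; this makes $\Phi_r$ a manifest isomorphism on $L_0$.

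To propagate to a general simple $L$, I would induct on $n$. Applying $\eE$ to \eqref{eq:n=1 sequence} (valid for all $M$, since both functors are exact and the sequence is natural in $M$) with $M=\eF^{(n-1)}L_0$, and comparing with the sequence obtained by applying $\eF$ to the analogous resolution for $\eE\eF^{(n-1)}L_0$, yields a commuting diagram of exact sequences whose outer terms are governed by $\Phi_r$ at smaller $n$. The graded-dimension bookkeeping of \cref{lem:sl2-rels} ensures that the middle term is forced to match as well, and a five-lemma argument then shows $\Phi_r$ is an isomorphism on $\eF^{(n)}L_0$, hence on its simple quotient $L$. The main obstacle I anticipate is normalization: one must verify that the scalar $a$ in the identification $XX^{*}=ay^r$ appearing in the proof of \cref{lem:sl2-rels} is invertible and that the mates of the crossing and the dots match the maps prescribed by (KM3) up to this unit; this hinges on the normalization of the cyclotomic relation \eqref{cyclotomic} and on tracking the adjunction unit/counit through the computations, rather than on any new representation-theoretic input.
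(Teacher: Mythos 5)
Your proposal takes a genuinely different route from the paper. The paper does not attempt to build the natural transformation $\Phi_r$ directly; instead it invokes Chuang--Rouquier's \cite[Th.\ 5.27]{CR}, which says that (KM1), (KM2), together with the assertion that $[\eE]$ and $[\eF]$ satisfy the $\mathfrak{sl}_2$-relations on $K_0(\Ccat)$ with the prescribed weight-space decomposition, already imply (KM3). The entire work then goes into the $K_0$-level statement: the paper shows the relations hold on the span $V$ of $[\eF^{(n)}M]$ for $\eE M=0$, and then proves $V = K_0(\Ccat)$ by a careful double-minimality argument (largest weight where spanning fails, then minimal $h^+ = \max\{m : \eE^{(m)}L \neq 0\}$) leveraging \cite[Prop.\ 5.20]{CR}. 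Your approach is more elementary in that it avoids the black box of CR's Theorem 5.27, which is an attractive feature.

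However, there is a genuine gap in your propagation step. You correctly reduce checking $\Phi_r$ to checking it on simple modules (devissage, using exactness of $\eE,\eF$). You then handle the simples $L_0$ with $\eE L_0 = 0$ directly. But the step ``$\Phi_r$ is an isomorphism on $\eF^{(n)}L_0$, hence on its simple quotient $L$'' is not valid: knowing a natural transformation of exact functors is an isomorphism on a module does not let you conclude it is an isomorphism on a quotient of that module unless you also know it on the kernel. Conversely, to verify the isomorphism on $\eF^{(n)}L_0$ by devissage, you would need to already know it on all composition factors of $\eF^{(n)}L_0$, including $L$ itself --- a circularity. To repair this you would need the structural fact (analogous to what the paper extracts from \cite[Prop.\ 5.20]{CR}) that the other composition factors $L''$ of $\eF^{(n)}L_0$ satisfy $h^+(L'') < n$, and then run an induction on $h^+$. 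You would also need to confirm that the scalar $a$ with $X^*X = ay^k$ is a unit in $\K$, though when $\K$ is a field this follows from nonvanishing plus degree considerations, so the normalization worry you flag is less serious than the propagation gap.
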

\begin{proof}
    By \cite[Th. 5.27]{CR}, it suffices to show that the maps $E=[\eE]$ and $F=[\eF]$ on $K_0(\Ccat)$ satisfy the relations of $\mathfrak{sl}_2$, with $H=[E,F]$ satisfying $H|_{\Ccat_{k}}=k\cdot \operatorname{Id}$.  

    If $M\in \Ccat_k$ and $\eE M=0$, then by \cref{lem:sl2-rels}, the operators $[\eE]$ and $[\eF]$ act on the span of the vectors $[\eF^{(n)}M]$ for $n=0,\dots, k$ by the structure coefficients of the usual $\mathfrak{sl}_2$-representation with highest weight $k$.  Thus, on the span $V$ of these vectors, the operators $[\eE]$ and $[\eF]$ on $K_0(\Ccat)$ satisfy the relations of $\mathfrak{sl}_2$.

    The only question is if $V$ is all of $K_0(\Ccat)$.  Assume not, and let $k$ be the largest weight such that there is a vector in $K_0(\Ccat_k)$ that is not in $V$.  Since the classes of simple modules are a basis for the Grothendieck group, we can assume that this vector is $[L]$ for $L\in \Ccat_k$ simple.  By assumption, vectors of the form $[\eF^nM]$ for $\eE M=0$ span $K_0(\Ccat_{k'})$ for all $k'>k$.  In particular, if we consider the category $\Ccat'$ of objects presented by projectives that are summands of $\eF^nP$ for $P\in \Ccat_{k'}$ with $k'>k$, the operators $[\eE]$ and $[\eF]$ satisfy the $\mathfrak{sl}_2$-relations on the Grothendieck group of this subcategory, and thus this subcategory has a strong $\mathfrak{sl}_2$-action. 

    In particular, let $h^+>0$ be the maximal integer such that $\eE^{(h^+)}(L)\neq 0.$  We can assume that we have chosen $L$ to minimize this statistic.  Let $L'$ be a simple subobject in $\eE^{(h^+)}(L)$.  By exactness, $\eE L'=0$.  By \cite[Prop. 5.20]{CR}, the module $\eF^{(h_+)}L'$ has a unique simple quotient, this quotient must be $L$ and  $[\eF^{(h_+)}L']=\binom{k+2h_+}{h_+}[L]+[M']$ where $\eE^{h_+}M'=0$.  By our assumption of minimality, this means that $[M']\in V$, and of course, $[\eF^{(h_+)}L']\in V$ by construction, so $[L]\in V$, contradicting our assumption that it was not.  

    Thus, we have $V=K_0(\Ccat)$ and we have the desired categorical action.
\end{proof}
This completes the proof of \cref{sl2action}.

\subsection{Morita equivalence}
We can now apply the usual results on derived equivalences associated to $\mathfrak{sl}_2$ actions.  It is useful to consider the filtration of $\Ccat$ by the Serre subcategories 
\begin{equation}\label{eq:Serre-subcats}
    \Ccat(m)=\bigoplus_{k\in\Z}\Ccat_k(m)\qquad  \Ccat_k(m)=
\begin{cases}
    \{M\mid \eE^{(\lceil (m-k+1)/2\rceil)}M=0\} &|k|\leq m \\
    0 & |k|>m
\end{cases}
\end{equation}
The subcategory $\Ccat(m)$ is the largest Serre subcategory closed under the action of $\eE$ and $\eF$ whose intersection with $\Ccat_k$ for all $k>m$ is trivial.  This is dual to the filtration constructed in \cite[Th. 5.8]{R} on the category of projectives in $\Ccat$.  We let $\Ccat'$ denote the category defined in \eqref{eq:Ccat-def} for the pressure $s_{\beta}\PR$, and let $\Ccat_k'(m)$ be the corresponding subcategories as in \eqref{eq:Serre-subcats}.

As before, let $k=\langle\Lambda-\alpha,\beta^{\vee}\rangle $ and consider the complex of functors
\begin{equation}\label{Theta-def}
    \Theta'=\begin{cases}
    \eF^{(k)}\to \eE\eF^{(k+1)} \to \eE^{(2)}\eF^{(k+2)} \to \cdots   & k\geq 0\\
      \eE^{(-k)}\to \eF\eE^{(-k+1)} \to \eF^{(2)}\eE^{(-k+2)} \to \cdots   & k\leq 0
\end{cases}
\end{equation}
In both cases, the left-most term is in homological degree 0.  We can think of these complexes of exact functors as functors between the corresponding derived categories.  
By \cite[Th. 6.4]{CR} and \cite[Prop. 8.4]{chuangPerverseEquivalences}:
\begin{lem}
    The complex \eqref{Theta-def} defines a derived equivalence
    \[\CR{\alpha}{s_{\beta} \Ldot{\Lambda}\alpha }\colon D^b(\R{\PR}{\Lambda}{\alpha}\mmod)\to D^b(\R{\PR}{\Lambda}{s_{\beta} \Ldot{\Lambda}\alpha }\mmod )\]
On the filtration $\mathcal{A}_{k,i}=\Ccat_k(|k|+2i)$, the equivalence $\CR{\alpha}{s_{\beta} \Ldot{\Lambda}\alpha }$ is perverse with respect to the perversity function $p(i)=i$.  
\end{lem}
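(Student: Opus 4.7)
The plan is to reduce everything to the two cited theorems by verifying their hypotheses in our setting. The preceding work has given us an integrable categorical $\mathfrak{sl}_2$-action on $\Ccat(\Lambda)$: the functors $\eE,\eF$ are exact and biadjoint up to shift (by the tensor--Hom adjunction plus finite-dimensionality), the nilHecke action of (KM2) and the relation (KM3) are supplied by \cref{sl2action}, and integrability in the sense of \cite{CR} follows from the finite-dimensionality of each $\R{\PR}{\Lambda}{\alpha}$ together with the fact that $\eE,\eF$ raise and lower the weight by $2$. In particular each object of $\Ccat(\Lambda)$ is killed by a sufficiently high power of $\eE$ and of $\eF$, which is the only finiteness hypothesis needed.

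Given this, the first step is to invoke \cite[Th.~6.4]{CR}. That theorem asserts that the Rickard complex of functors on an integrable categorical $\mathfrak{sl}_2$-category, which is exactly the complex \eqref{Theta-def} (modulo a standard translation between our normalizations and theirs), induces a triangulated self-equivalence of $D^b(\Ccat(\Lambda))$ exchanging the weight spaces $\Ccat_{k}$ and $\Ccat_{-k}$. To identify the target of the equivalence on the $\Ccat_k$-summand with $D^b(\R{\PR}{\Lambda}{s_\beta\Ldot{\Lambda}\alpha}\mmod)$, I would use \eqref{eq:w-dot-action} to compute $\Lambda-s_\beta\Ldot{\Lambda}\alpha = s_\beta(\Lambda-\alpha)$, so that $\langle \Lambda-s_\beta\Ldot{\Lambda}\alpha,\beta^{\vee}\rangle = -k$, matching the weight-reversal induced by $\Theta'$. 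This yields the derived equivalence $\CR{\alpha}{s_\beta\Ldot{\Lambda}\alpha}$.

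For the perversity statement I would appeal to \cite[Prop.~8.4]{chuangPerverseEquivalences}, which identifies the Chuang--Rouquier equivalence as perverse with respect to the tautological filtration on each weight space by the subcategory of objects killed by a specified power of $\eE$ (when $k\geq 0$) or $\eF$ (when $k\leq 0$), and shows that the perversity function is $p(i)=i$. The only bookkeeping step is to check that this tautological filtration coincides with our $\mathcal{A}_{k,i}=\Ccat_k(|k|+2i)$; unwinding \eqref{eq:Serre-subcats} gives $\mathcal{A}_{k,i}=\{M\in\Ccat_k\mid \eE^{(i+1)}M=0\}$ for $k\geq 0$ and symmetrically for $k<0$, which is precisely the filtration used in \cite{chuangPerverseEquivalences}.

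The main obstacle, to the extent there is one, is verifying the hypotheses of \cite{CR} rather than the conclusion: ensuring that $\eE,\eF$ really do define a strong categorical action (done in \cref{sl2action}) and that the categories $\Ccat_k(\Lambda)$ satisfy the requisite finiteness properties. The latter is handled by the finite-dimensionality lemma, which guarantees that each weight space is a direct sum of module categories over finite-dimensional algebras, hence abelian with enough projectives and Krull--Schmidt, and in particular integrability is automatic. Once these are in place, the theorem is a direct citation.
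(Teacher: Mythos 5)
Your proposal is correct and takes essentially the same approach as the paper, which presents the lemma as a direct consequence of \cite[Th.~6.4]{CR} and \cite[Prop.~8.4]{chuangPerverseEquivalences} following \cref{sl2action}. The additional verification you supply (integrability, the weight computation $\langle \Lambda - s_\beta\Ldot{\Lambda}\alpha,\beta^\vee\rangle = -k$, and the identification $\mathcal{A}_{k,i}=\{M\in\Ccat_k\mid \eE^{(i+1)}M=0\}$ for $k\geq 0$) is exactly the bookkeeping the paper leaves implicit.
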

This equivalence is an important step toward the Morita equivalence of \Cref{conj}.

 We can always consider the algebras $\R{\PR}{\Lambda}{\alpha}$ and $\R{s_{\beta} \PR}{\Lambda}{\alpha}$ as quotients of the algebra $\mathcal{R}_0(\Lambda, \alpha)$.  
Let  \[\bR{\PR}{s_{\beta}\PR}{\Lambda}{\alpha}=\R{\PR}{\Lambda}{\alpha}\Lotimes_{\mathcal{R}_0(\Lambda, \alpha)}  \R{s_{\beta} \PR}{\Lambda}{\alpha}.\]  This is a complex of $\R{\PR}{\Lambda}{\alpha}\operatorname{-}\R{s_{\beta} \PR}{\Lambda}{\alpha}$-bimodules.  
By \cref{lem:Tor-flag}, we have that $\bR{\PR}{\PR}{\Lambda}{\alpha}=\R{\PR}{\lambda}{\alpha}$.   Derived tensor product with $\bR{\PR}{s_{\beta}\PR}{\Lambda}{\alpha}$ defines a functor 
    \[\tw{s_{\beta}\PR}{\PR}\colon D^b(\R{s_{\beta}\PR}{\Lambda}{\alpha}\mmod)\to D^b(\R{\PR}{\Lambda}{\alpha}\mmod).\]

\begin{lem}\label{lem:shift}
    If $\eE M=0$, then $\tw{s_{\beta}\PR}{\PR}(\eF^{(m)}M)=\eF^{(m)}M[-m]$.
\end{lem}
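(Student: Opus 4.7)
The strategy is to resolve $\eF^{(m)}M$ by horizontal-composition modules whose behaviour under $\R{\PR}{\Lambda}{\alpha+m\beta}\Lotimes_{\mathcal{R}_0(\Lambda,\alpha+m\beta)}-$ can be analysed term by term. Since $\beta$ is negative $s_\beta\PR$-simple, the analogue of \cref{lem:standard-resolution} with the roles of left and right exchanged yields a resolution
\[
0\to \Delta_\beta^{(m)}\circ M \to \Delta_\beta^{(m-1)}\circ M\circ \Delta_\beta \to \cdots \to M\circ \Delta_\beta^{(m)} \to \eF^{(m)}M \to 0,
\]
which I view as an exact complex of $\mathcal{R}_0(\Lambda,\alpha+m\beta)$-modules by restriction along the quotient $\mathcal{R}_0\twoheadrightarrow \R{s_\beta\PR}{\Lambda}{\alpha+m\beta}$. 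I index the resolution so that $\tilde P_k=\Delta_\beta^{(m-k)}\circ M\circ \Delta_\beta^{(k)}$, with $\tilde P_m=M\circ \Delta_\beta^{(m)}$ in cohomological degree $0$ and $\tilde P_0=\Delta_\beta^{(m)}\circ M$ in degree $-m$.

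Applying $\R{\PR}{\Lambda}{\alpha+m\beta}\Lotimes_{\mathcal{R}_0}-$ term by term, I first want to show that each $\tilde P_k$ admits a $\bar\Delta$-flag as an $\mathcal{R}_0$-module, so that by \cref{lem:Tor-flag} all higher $\operatorname{Tor}$ groups vanish and the derived tensor product reduces to the ordinary one. Granting this, I then observe that for every $k<m$ the module $\tilde P_k$ carries at least one $\beta$-strand to the right of the red strand; since $\beta$ is positive $\PR$-simple, this rightmost group of $\beta$-strands has positive pressure and escapes, killing the corresponding idempotents in $\R{\PR}{\Lambda}{\alpha+m\beta}$ by the relations \eqref{steadiedidemp}. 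Thus only the leftmost term $\tilde P_0=\Delta_\beta^{(m)}\circ M$ survives the quotient, and by the $\PR$-side version of \cref{lem:standard-resolution} this surviving object coincides with $\eF^{(m)}M$ viewed in $\R{\PR}{\Lambda}{\alpha+m\beta}\mmod$ (the other standard-resolution terms vanish here for the mirror reason).

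Combining these observations, the complex $\R{\PR}{\Lambda}{\alpha+m\beta}\otimes_{\mathcal{R}_0}\tilde P_\bullet$ is concentrated in cohomological degree $-m$ with value $\eF^{(m)}M$, which is exactly the asserted identity $\tw{s_\beta\PR}{\PR}(\eF^{(m)}M)\cong \eF^{(m)}M[-m]$.

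The main obstacle is the $\bar\Delta$-flag claim for the intermediate modules $\tilde P_k$ with $0<k<m$: these have $\beta$-strands on both sides of $M$ and therefore do not arise directly as standardizations for a single slope datum, since $\beta$ carries only one slope under $\PR$ (or $s_\beta\PR$). My plan to handle this is to filter iteratively---first standardize on the right using the $s_\beta\PR$-convex order to view $M\circ \Delta_\beta^{(k)}$ as a standardization, and then apply \cref{lem:standard-resolution} to $\Delta_\beta^{(m-k)}\circ (M\circ \Delta_\beta^{(k)})$ on the $\PR$-side, using the hypothesis $\eE M=0$ to ensure that the required $\operatorname{Ext}$-vanishings against costandard modules $\nabla(\boldsymbol{\gamma})$ follow from an adjunction reduction to $M$ itself, as in the proof of \cref{lem:N-filtration}.
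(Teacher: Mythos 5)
Your proof takes essentially the same approach as the paper's, and the core argument is correct: resolve $\eF^{(m)}M$ by the complex from \cref{lem:standard-resolution}, use $\bar\Delta$-filteredness to replace derived tensor by ordinary tensor on each term, observe that every term except the outermost one is generated by an idempotent of the form \eqref{steadiedidemp} for $\ideal{\PR}$ and so vanishes in the quotient, and read off the single surviving term in homological degree $-m$. (The paper writes the mirror sign convention, with $M\circ\Delta_\beta^{(m)}$ surviving; this is just a matter of whether one takes $\beta$ positive or negative $\PR$-simple, and the substance is identical to yours.)

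The one place you flag as a gap---the $\bar\Delta$-flag claim for the intermediate $\tilde P_k=\Delta_\beta^{(m-k)}\circ M\circ\Delta_\beta^{(k)}$---is real, but it is already resolved in the body of the paper and your proposed plan for filling it introduces an unnecessary detour. In the proof of \cref{lem:standard-resolution}, the module $\Delta_\beta^{(m-k)}\circ M\circ\Delta_\beta^{(k)}$ is filtered (via \cref{lem:N-filtration} applied to $\Delta_\beta^{(m-k)}\circ M$ and then horizontally composing with $\Delta_\beta^{(k)}$ on the right) with subquotients isomorphic to direct sums of $\eF^{(m-k-a)}M\circ\Delta_\beta^{(k+a)}$. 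Each of these is a standardization for the $\PR$-slope datum with $\gamma_0=\alpha+(m-k-a)\beta$ and $\gamma_1=(k+a)\beta$, since $\beta$ has positive $\PR$-pressure; so they are $\Delta$-filtered, hence $\bar\Delta$-filtered, and \cref{lem:Tor-flag} applies. In particular, you do not need to "standardize on the right using the $s_\beta\PR$-convex order'': the Tor-vanishing required against $\R{\PR}{\Lambda}{\alpha+m\beta}$ is with respect to the $\PR$-stratification, not the $s_\beta\PR$ one, and the filtration you need is the one that groups all the $\beta$-strands on the positive-$\PR$-pressure side of the diagram. With that substitution, your argument matches the paper's line for line.
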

\begin{proof}
    This follows immediately from \eqref{eq:divided power sequence}. This shows  $\R{\PR}{\Lambda}{\alpha}\Lotimes_{\mathcal{R}_0(\Lambda, \alpha)} \eF^{(m)}M$ can be calculated by the sequence \[	\R{\PR}{\Lambda}{\alpha}\Lotimes_{\mathcal{R}_0(\Lambda, \alpha)}(M \circ \Delta_{\beta}^{(m)})  \to  \cdots \to \R{\PR}{\Lambda}{\alpha}\Lotimes_{\mathcal{R}_0(\Lambda, \alpha)}(\Delta_{\beta}^{(m)}\circ M ).\]
    All of the terms are $\bar{\Delta}$-filtered, and thus have no higher Tor's with $\R{\PR}{\Lambda}{\alpha}$ by \cref{lem:Tor-flag}.  Furthermore, $\R{\PR}{\Lambda}{\alpha}\Lotimes_{\mathcal{R}_0(\Lambda, \alpha)}(\Delta_{\beta}^{(a)}\circ M \circ \Delta_{\beta}^{(m-a)})=0$ if $a>0$, so only the last term contributes.  The result follows.  
\end{proof}

\begin{thm}\label{th:Xi-equivalence}
    The functor $\tw{s_{\beta}\PR}{\PR}$ is a derived equivalence which is perverse for the filtration by the subcategories $\mathcal{A}_{k,i}=\Ccat_k(|k|+2i)$ and  $\mathcal{A}_{k,i}'=\Ccat_k(|k|+2i)$ with perversity $p(i)=-i$.  
\end{thm}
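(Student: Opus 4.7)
The plan is to verify that $\tw{s_\beta\PR}{\PR}$ satisfies the local criteria of a perverse equivalence with respect to the filtrations $\{\mathcal{A}'_{k,i}\}$ and $\{\mathcal{A}_{k,i}\}$ and perversity $p(i)=-i$; once verified, the derived equivalence property follows from the general theory of \cite{chuangPerverseEquivalences}. The essential computation is already packaged in \cref{lem:shift}: for $M$ with $\eE M = 0$, one has $\tw{s_\beta\PR}{\PR}(\eF^{(m)}M) = \eF^{(m)}M[-m]$, with the homological shift matching the required perversity exactly.

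The first step is to describe each stratum $\mathcal{A}'_{k,i}/\mathcal{A}'_{k,i-1}$ using the $\mathfrak{sl}_2$-structure: it is equivalent, under the divided-power functor $\eF^{(i)}$, to the full subcategory of $\Ccat'_{k-2i}$ consisting of objects annihilated by $\eE$. This uses the dual of the filtration description of \cite[Th.~5.8]{R}, in analogy with the perversity analysis of $\CR{\alpha}{s_\beta \Ldot{\Lambda}\alpha}$ earlier in the section, and a parallel description holds on the unprimed side. \cref{lem:shift} then directly computes the action of $\tw{s_\beta\PR}{\PR}$ on these generators, yielding both the filtration-preservation condition and the correct shift on each associated graded.

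The second step is to promote this generator-level statement to the entire stratum. Since $\tw{s_\beta\PR}{\PR}$ is a derived tensor product, it is exact and commutes with cones and direct sums; combined with the resolutions of \cref{lem:standard-resolution}, one can reduce any object in $\mathcal{A}'_{k,i}$ to iterated extensions of generators of the form $\eF^{(j)}M$ with $j \leq i$ and $\eE M=0$, and propagate the shift computation through these triangles. Identifying the associated-graded categories on the two sides as the ``highest weight'' subcategories in their respective $\mathfrak{sl}_2$-structures then yields the stratum-level equivalences.

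The main obstacle is that the two categorical $\mathfrak{sl}_2$-actions---the one on $\Ccat$ (with $\beta$ positive $\PR$-simple, hence left-addable) and the one on $\Ccat'$ (with $\beta$ negative $s_\beta\PR$-simple, hence right-addable)---use geometrically opposite conventions for $\eE$ and $\eF$. Matching these actions through $\tw{s_\beta\PR}{\PR}$ requires tracking how the bimodule $\bR{\PR}{s_\beta\PR}{\Lambda}{\alpha}$ intertwines left- and right-attachment of $\Delta_\beta$; once this identification of the highest-weight subcategories on the two sides is controlled, the stratum equivalences reduce to the identity and the proof concludes.
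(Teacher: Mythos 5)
The central gap is in the very first sentence: you claim that after verifying the ``local criteria of a perverse equivalence'' (filtration preservation and stratum-level shift computations), ``the derived equivalence property follows from the general theory of \cite{chuangPerverseEquivalences}.'' This is not so. The definition of a perverse equivalence in \cite{chuangPerverseEquivalences} presupposes that the functor is already an equivalence; it does not furnish a criterion that upgrades ``induces equivalences on associated-graded pieces'' to ``is an equivalence'' without further work, and such a devissage step is delicate for Verdier quotients of triangulated categories. Your plan therefore never actually establishes that $\tw{s_\beta\PR}{\PR}$ is a derived equivalence.

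The paper's proof handles this directly and quite differently: it takes the right adjoint $\tw{s_{\beta}\PR}{\PR}' = \mathbb{R}\Hom(\bR{\PR}{s_{\beta}\PR}{\Lambda}{\alpha},-)$, computes (by a dual version of \cref{lem:shift}) that $\tw{s_{\beta}\PR}{\PR}'(\eF^{(m)}M)\cong \eF^{(m)}M[m]$ when $\eE M=0$, and concludes that the unit of adjunction is an isomorphism on each such object (using finite-dimensionality to upgrade injectivity to bijectivity). Since every simple module is a composition factor of some $\eF^{(m)}M$ with $\eE M = 0$, the unit is an isomorphism everywhere, giving the equivalence. Only after this is the perversity deduced, and it is immediate from \cref{lem:shift} applied to the generating objects $\eF^{(i)}M$ of $\mathcal{A}_i/\mathcal{A}_{i-1}$. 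You correctly identify \cref{lem:shift} as the key computation, and you correctly observe that the strata are generated by such objects, but the missing piece is the adjunction-unit argument. Relatedly, the ``main obstacle'' you flag about reconciling the opposite chiralities of the two $\mathfrak{sl}_2$-actions is a non-issue in the paper's argument: one simply applies \cref{lem:shift} and its dual, and no intertwining of left and right $\Delta_\beta$-attachments needs to be tracked.
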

\begin{proof}
    Let $\tw{s_{\beta}\PR}{\PR}'(M)=\mathbb{R} \Hom(\bR{\PR}{s_{\beta}\PR}{\Lambda}{\alpha},M)$ be the right adjoint functor to $\tw{s_{\beta}\PR}{\PR}$.    We thus have a  unit map $\eF^{(m)}M\to \tw{s_{\beta}\PR}{\PR}'\tw{s_{\beta}\PR}{\PR}(\eF^{(m)}M) $.  This map is given by tensoring with the identity map on $\bR{\PR}{s_{\beta}\PR}{\Lambda}{\alpha}$ inducing a map 
    \[\eF^{(m)}M\to \mathbb{R} \Hom(\bR{\PR}{s_{\beta}\PR}{\Lambda}{\alpha},\R{\PR}{\Lambda}{\alpha}\Lotimes_{\mathcal{R}_0(\Lambda, \alpha)} \eF^{(m)}M).\]
   By a dual calculation to \cref{lem:shift}, we have that if $\eE M=0$ then $\tw{s_{\beta}\PR}{\PR}'(\eF^{(m)}M)\cong \eF^{(m)}M[m]$ and so $\tw{s_{\beta}\PR}{\PR}'\tw{s_{\beta}\PR}{\PR}(\eF^{(m)}M) \cong \eF^{(m)}M$.  Since $\tw{s_{\beta}\PR}{\PR}'\tw{s_{\beta}\PR}{\PR}$ sends the identity on this object to the identity, the unit of the adjunction must be injective; since $\eF^{(m)}M$ is finite-dimensional,  this show that the map is an isomorphism if $\eE M=0$.  
   
    Since every simple is a composition factor of such a module, this shows that $\tw{s_{\beta}\PR}{\PR}'\tw{s_{\beta}\PR}{\PR} M\cong M$ for all $M$.  Thus, the functor $\tw{s_{\beta}\PR}{\PR}$ is a derived equivalence.

    The category $\mathcal{A}_{i}/\mathcal{A}_{i-1}$ is generated by the objects $\eF^{(i)}M$ for $M\in \Ccat_{k+2i}$ which satisfy $\eE M=0$, so \cref{lem:shift} shows that $\tw{s_{\beta}\PR}{\PR}[i]$ is exact on this subquotient.  
\end{proof}

\begin{thm}\label{conj2}
    If $k=\langle \Lambda-\alpha,\beta^{\vee}\rangle \geq 0$, the compositions 
    \[\CR{\alpha}{s_{\beta}\Ldot{\Lambda}\alpha}\circ \tw{s_{\beta}\PR}{\PR}\colon D^b(\R{s_{\beta}\PR}{\Lambda}{\alpha}\mmod)\to D^b(\R{\PR}{\Lambda}{s_{\beta}\Ldot{\Lambda}\alpha}\mmod)\] 
     \[\tw{s_{\beta}\PR}{\PR}\circ \CR{s_{\beta}\Ldot{\Lambda}\alpha}{\alpha} \colon D^b(\R{s_{\beta}\PR}{\Lambda}{s_{\beta}\Ldot{\Lambda}\alpha}\mmod)\to D^b(\R{\PR}{\Lambda}{\alpha}\mmod)\] 
     are exact in the usual $t$-structure and thus induce Morita equivalences.  That is, for any sign of $k$, we have a Morita equivalence $\R{s_{\beta}\PR}{\Lambda}{s_{\beta}\Ldot{\Lambda}\alpha}\sim \R{\PR}{\Lambda}{\alpha}$.
\end{thm}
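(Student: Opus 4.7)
The plan is to exploit the fact that $\CR{\alpha}{s_{\beta}\Ldot{\Lambda}\alpha}$ and $\tw{s_{\beta}\PR}{\PR}$ are perverse equivalences with opposite perversities with respect to the same filtration on the intermediate category. Under the standard composition principle for perverse equivalences the perversities add; since $i + (-i) = 0$, the composite will have zero perversity and therefore be $t$-exact, which suffices to induce a Morita equivalence.

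The first step is to verify compatibility of filtrations on the middle category. The equivalence $\tw{s_{\beta}\PR}{\PR}$ has source filtration $\mathcal{A}'_{k,i}=\Ccat'_{k}(|k|+2i)$ and target filtration $\mathcal{A}_{k,i}=\Ccat_{k}(|k|+2i)$, with perversity $p(i)=-i$ by \cref{th:Xi-equivalence}. The Chuang--Rouquier equivalence $\CR{\alpha}{s_{\beta}\Ldot{\Lambda}\alpha}$ has source filtration $\mathcal{A}_{k,i}$ and target filtration $\mathcal{A}_{-k,i}$, with perversity $p(i)=i$ by the lemma immediately preceding \cref{th:Xi-equivalence}. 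Both filtrations on the middle category are defined via the $\eE$-annihilator for the $\mathfrak{sl}_2$-action on $\Ccat$ attached to the positive $\PR$-simple root $\beta$, so the filtration on the target of $\tw{s_{\beta}\PR}{\PR}$ agrees with the one on the source of $\CR{\alpha}{s_{\beta}\Ldot{\Lambda}\alpha}$. The composition principle (cf.\ \cite[Prop.~2.3]{chuangPerverseEquivalences}) then applies, producing a perverse equivalence of perversity $p(i)=0$ relative to the filtrations $\mathcal{A}'_{k,\bullet}$ on the source and $\mathcal{A}_{-k,\bullet}$ on the target.

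The next step is to pass from perversity zero to $t$-exactness. By definition, a perverse equivalence of perversity zero restricts on every subquotient $\mathcal{A}_{k,i}/\mathcal{A}_{k,i-1}$ to an abelian equivalence; in particular it sends simples to simples with no cohomological shift. Since each simple module in $\R{\PR}{\Lambda}{\alpha}\mmod$ arises as a composition factor of some $\eF^{(n)}L$ with $\eE L=0$, and such objects generate the successive subquotients of the filtration, every simple lies in some $\mathcal{A}_{k,i}$ and is sent to a simple of the target heart. Combined with exactness on each subquotient and the fact that every object in the heart has a composition series by simples, one concludes that the composite $\CR{\alpha}{s_{\beta}\Ldot{\Lambda}\alpha}\circ\tw{s_{\beta}\PR}{\PR}$ is $t$-exact in the usual $t$-structure. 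Any $t$-exact equivalence between bounded derived categories of finite-dimensional modules over finite-dimensional $\K$-algebras restricts to an equivalence of the hearts, which by Morita's theorem is induced by a progenerator bimodule; this yields the Morita equivalence $\R{s_{\beta}\PR}{\Lambda}{\alpha}\sim \R{\PR}{\Lambda}{s_{\beta}\Ldot{\Lambda}\alpha}$, equivalent to the theorem after relabeling. The other composition and the case $k<0$ follow symmetrically, the latter by replacing $\alpha$ with $s_{\beta}\Ldot{\Lambda}\alpha$.

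The main obstacle I anticipate is carefully matching the two $\mathfrak{sl}_2$-structures on $\Ccat$ and $\Ccat'$ across the bimodule complex $\bR{\PR}{s_{\beta}\PR}{\Lambda}{\alpha}$: although both categorifications are controlled by the root $\beta$, on one side it is positive $\PR$-simple and on the other negative $s_{\beta}\PR$-simple, so the induction/restriction functors swap left/right conventions. Verifying that $\tw{s_{\beta}\PR}{\PR}$ intertwines the two actions strongly enough to identify the filtration subquotients uses \cref{lem:shift} in an essential way, applied both to $\eF^{(m)}M$ on the source and to its image on the target; this bookkeeping, rather than the perverse composition itself, is where most of the technical work will lie.
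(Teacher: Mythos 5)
Your proposal is correct and follows essentially the same route as the paper: compose the two perverse equivalences $\tw{s_\beta\PR}{\PR}$ (perversity $-i$) and $\CR{\alpha}{s_\beta\Ldot{\Lambda}\alpha}$ (perversity $i$) across the common filtration $\mathcal{A}_{k,\bullet}$ on the middle category, invoke the additivity of perversity from \cite{chuangPerverseEquivalences} to obtain perversity $0$, then conclude $t$-exactness and hence a Morita equivalence. The worry you raise in the last paragraph about re-verifying the filtration match via \cref{lem:shift} is unnecessary — both filtrations on the middle category are defined in terms of the same $\mathfrak{sl}_2$-action on $\Ccat$ attached to the $\PR$-simple root $\beta$, so they coincide by construction, exactly as the paper implicitly assumes.
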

\begin{proof}
The functor $\CR{\alpha}{s_{\beta}\Ldot{\Lambda}\alpha}\circ \tw{s_{\beta}\PR}{\PR}$ defines an equivalence $D^b(\Ccat_k')\to D^b(\Ccat_{-k})$. The equivalence $\tw{s_{\beta}\PR}{\PR}$ is perverse for the chains of subcategories $\mathcal{A}'_{k,i},\mathcal{A}_{k,i}$ for $p(i)=-i$ and $\CR{\alpha}{s_{\beta}\Ldot{\Lambda}\alpha} $ is perverse for the chains $\mathcal{A}_{k,i},\mathcal{A}_{-k,i}$ for $p'(i)=-i$.  By \cite[Lem. 4.4]{chuangPerverseEquivalences}, this means that $\CR{\alpha}{s_{\beta}\Ldot{\Lambda}\alpha}\circ \tw{s_{\beta}\PR}{\PR}$ is perverse with respect to the chains of subcategories $\mathcal{A}'_{k,*},\mathcal{A}_{-k,*}$ with the perversity function $p+p'(i)=0$.  By \cite[Lem. 4.16]{chuangPerverseEquivalences}, this implies that $\CR{\alpha}{s_{\beta}\Ldot{\Lambda}\alpha}\circ \tw{s_{\beta}\PR}{\PR}$ is $t$-exact.

The argument for $ \tw{s_{\beta}\PR}{\PR}\CR{s_{\beta}\Ldot{\Lambda}\alpha}{\alpha}$ is analogous.   
\end{proof}

\begin{proof}[Proof of \Cref{conj}]
Consider $\R{\PR}{\Lambda}{\alpha}$ and $\R{w\cdot \PR}{\Lambda}{w\Ldot{\Lambda} \alpha}$.  By choosing a gallery that joins the alcoves containing $\PR$ and $w\cdot \PR$, we can obtain a reduced factorization $w=s_{\beta_h}\cdots s_{\beta_1}$ such that $\beta_{g+1}$ is $\PR$-simple for $s_{\beta_g}\cdots s_{\beta_1}\PR$.  
    By \Cref{conj2}, we have a chain of Morita equivalences:
    \begin{multline*}
        \R{\PR}{\Lambda}{\alpha}\sim \R{s_{\beta_1}\PR}{\Lambda}{s_{\beta_1}\Ldot{\Lambda}\alpha}\sim \cdots\\ \sim\R{s_{\beta_{h-1}}\cdots s_{\beta_1}\PR}{\Lambda}{s_{\beta_{h-1}}\cdots s_{\beta_1}\Ldot{\Lambda}\alpha}\sim \R{w\cdot \PR}{\Lambda}{w\Ldot{\Lambda} \alpha}.
    \end{multline*}
\end{proof}
Since it will be important in \cref{sec:RoCK}, let us emphasize that the result holds over an arbitrary base ring, in particular, over $\Z$.  The algebra  $R^{\Lambda}_{\alpha}$ is known to be free over $\K$ for all $\Lambda,\alpha$ by \cite[Rmk. 4.20]{kangCategorificationHighest2012}. Since freeness over a base ring is Morita invariant, it follows that:
\begin{cor}\label{cor:R-free}
    The algebra $\R{\PR}{\Lambda}{\alpha}$ is free of finite rank as a $\K$-module.   
\end{cor}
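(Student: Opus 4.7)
The plan is to deduce the corollary from the known freeness of cyclotomic KLR algebras via the Morita equivalence of \Cref{conj}. Given $(\PR,\Lambda,\alpha)$, I would first choose $w \in \affweyl$ such that $w \cdot \PR$ lies in the antidominant chamber, i.e.\ $(w\cdot \PR)(\alpha_i) < 0$ for every simple root $\alpha_i$. This is always possible because the antidominant region in pressure space (with the normalization $\PR(\delta)=-1$) is a fundamental alcove for the $\affweyl$-action, and the affine Weyl group acts simply transitively on the set of alcoves. By \Cref{conj}, we obtain a Morita equivalence
\[
    \R{\PR}{\Lambda}{\alpha} \;\sim\; \R{w\cdot\PR}{\Lambda}{w\Ldot{\Lambda}\alpha}.
\]
If $w\Ldot{\Lambda}\alpha$ fails to lie in the positive root cone, the right-hand side is zero, which forces the left-hand side to be zero as well (a Morita equivalent of the zero algebra is zero), so the claim is trivial. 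Otherwise, the antidominance of $w\cdot\PR$ means the steadied quotient coincides with the cyclotomic KLR algebra $R^{\Lambda}_{w\Ldot{\Lambda}\alpha}$, which is free of finite rank over $\K$ by \cite[Rmk. 4.20]{kangCategorificationHighest2012}.

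Next, I would transfer freeness along the Morita equivalence. A crucial point is that the proof of \Cref{conj} --- through the intermediate equivalences of \Cref{conj2} and \Cref{th:Xi-equivalence} --- is valid over an arbitrary commutative base ring $\K$, with no characteristic or field hypothesis on $\K$. The Morita equivalence therefore produces an isomorphism $\R{\PR}{\Lambda}{\alpha} \cong e M_n(R^{\Lambda}_{w\Ldot{\Lambda}\alpha}) e$ for some idempotent $e$ in a matrix ring over the cyclotomic quotient. This exhibits $\R{\PR}{\Lambda}{\alpha}$ as a $\K$-direct summand of a finitely generated free $\K$-module, hence as a finitely generated projective $\K$-module.

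The main obstacle I anticipate is the final upgrade of ``projective'' to ``free'' over a general commutative base ring. For the key application in \Cref{sec:RoCK}, namely $\K = \Z$, this step is immediate, since finitely generated projective $\Z$-modules are free. In full generality, I would appeal to the graded structure: the algebra is nonnegatively graded with finite-rank graded pieces (by the finite-dimensionality lemma applied fibrewise), and the graded Hilbert series is combinatorially determined and therefore constant across $\operatorname{Spec}(\K)$. A graded Nakayama argument then upgrades the projective $\K$-module of constant graded rank to a free one of the expected rank, completing the proof.
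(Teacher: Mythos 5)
Your strategy matches the paper's: the paper's justification is the two sentences preceding the corollary, which appeal to freeness of $R^\Lambda_\alpha$ over any $\K$ by \cite[Rmk.~4.20]{kangCategorificationHighest2012} and then assert that ``freeness over a base ring is Morita invariant.'' You are in fact being more careful than the text at the key step: a $\K$-linear Morita equivalence $\R{\PR}{\Lambda}{\alpha}\cong eM_n(R^\Lambda_{w\Ldot{\Lambda}\alpha})e$ only exhibits the steadied quotient as a $\K$-direct summand of a finite free module, i.e.\ as finitely generated projective, and ``free $\Rightarrow$ free'' under Morita equivalence is simply not a theorem over an arbitrary commutative ring (over a base with nontrivial class group, $e M_n(A) e$ can be a non-free projective). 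So the worry you raise at the end is the real content of this corollary.

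The gap is in your proposed resolution. The ``graded Nakayama'' step does not close it: for a general commutative $\K$, a finitely generated projective $\K$-module of constant rank need not be free, and this is unchanged if the constancy holds in every graded degree. (Nakayama's lemma requires $\K$ local; the claim that the Hilbert series is ``combinatorially determined'' is also exactly what is at stake, not something you get for free.) The argument that does work is to combine your observation for $\K=\Z$ with a base-change step: over $\Z$ the Morita equivalence yields that $\R{\PR}{\Lambda}{\alpha}_{\Z}$ is finitely generated projective, hence free (as $\Z$ is a PID), hence flat. Now $\Rz{\Lambda}{\alpha}$ has a $\K$-independent diagram basis, and the defining ideal $\mathcal{I}(\PR)$ is generated by idempotents defined over $\Z$; flatness of $\R{\PR}{\Lambda}{\alpha}_{\Z}$ makes the sequence
\[
0 \to \mathcal{I}(\PR)_{\Z}\otimes_{\Z}\K \to \Rz{\Lambda}{\alpha}_{\K} \to \R{\PR}{\Lambda}{\alpha}_{\Z}\otimes_{\Z}\K \to 0
\]
exact, and the image of $\mathcal{I}(\PR)_{\Z}\otimes_{\Z}\K$ is precisely $\mathcal{I}(\PR)_{\K}$ since both are the two-sided ideal generated by the same idempotents. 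Hence $\R{\PR}{\Lambda}{\alpha}_{\K} \cong \R{\PR}{\Lambda}{\alpha}_{\Z}\otimes_{\Z}\K$ is free over $\K$. With this replacement your proof is complete; without it the general-$\K$ case is not established.
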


\subsection{Scopes chambers}
\label{sec:Scopes}

Let us return to understanding how the algebra $\R{\PR}{\Lambda}{\alpha}$ varies as $\PR$ changes.  These results depend on an elementary consequence of \cref{conj}:
\begin{cor}\label{cor:number of simples}  Assume that $\K$ is a field.
    The number of simple modules over $\R{\PR}{\Lambda}{\alpha}$ is independent of $\PR$. In particular:
    \begin{enumerate}
        \item We have $\R{\PR}{\Lambda}{\alpha}\neq 0$ if and only if $\Lambda-\alpha$ is a weight of the irrep $V(\Lambda)$ with highest weight $\Lambda$.  
        \item If $\Lambda-\alpha$ is dominant, then $\R{\PR}{\Lambda}{\alpha}\neq 0$.  
    \end{enumerate}
\end{cor}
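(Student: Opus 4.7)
The plan is to reduce to the cyclotomic case via \Cref{conj} and then invoke the standard categorification theorem for cyclotomic KLR algebras.

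Given a pressure $\PR$, I would first choose $w \in \affweyl$ so that $w \cdot \PR$ lies in the ``cyclotomic'' alcove, on which every simple root takes a negative value; such a $w$ exists because $\affweyl$ acts transitively on the alcoves of the relevant hyperplane arrangement. By \Cref{conj}, $\R{\PR}{\Lambda}{\alpha}$ is Morita equivalent to $\R{w \cdot \PR}{\Lambda}{w \Ldot{\Lambda} \alpha}$. In the cyclotomic alcove the steadied ideal coincides with the usual cyclotomic ideal, as noted immediately after the definition of steadied quotients, so this target is precisely the cyclotomic KLR algebra $R^{\Lambda}_{w \Ldot{\Lambda} \alpha}$.

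Next I would invoke the Khovanov--Lauda--Rouquier categorification theorem (in the form established by Brundan--Kleshchev, Kang--Kashiwara, and Webster), which identifies the number of simple modules over $R^{\Lambda}_\beta$ with $\dim V(\Lambda)_{\Lambda - \beta}$, the multiplicity of $\Lambda - \beta$ as a weight of the integrable highest-weight module $V(\Lambda)$. Applying this with $\beta = w \Ldot{\Lambda} \alpha$, and using the identity $\Lambda - w \Ldot{\Lambda} \alpha = w(\Lambda - \alpha)$ built into \eqref{eq:w-dot-action} together with the Weyl-invariance of weight multiplicities, gives
\[
\#\{\text{simple } \R{\PR}{\Lambda}{\alpha}\text{-modules}\} \;=\; \dim V(\Lambda)_{w(\Lambda-\alpha)} \;=\; \dim V(\Lambda)_{\Lambda - \alpha},
\]
an expression with no dependence on $\PR$.

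Parts (1) and (2) then fall out as immediate consequences. For (1), $\R{\PR}{\Lambda}{\alpha} \neq 0$ if and only if it admits at least one simple module, equivalently $\dim V(\Lambda)_{\Lambda-\alpha} > 0$, which is exactly the condition that $\Lambda - \alpha$ be a weight of $V(\Lambda)$. For (2), if $\Lambda - \alpha$ is dominant then $\alpha = \Lambda - (\Lambda - \alpha)$ is a non-negative integer combination of simple roots, so $\Lambda - \alpha \leq \Lambda$ in the dominance order and is therefore a weight of $V(\Lambda)$. The main point requiring care is the invocation of ``number of simples equals weight multiplicity'' over a general field $\K$: the cleanest statement requires $\K$ to be a splitting field, but the weaker equivalence $R^{\Lambda}_\beta \neq 0 \iff V(\Lambda)_{\Lambda - \beta} \neq 0$ that is actually needed for (1) and (2) is insensitive to $\K$ and follows from the identification of $K_0$ of projectives with the integral form of $V(\Lambda)$.
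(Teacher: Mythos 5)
Your proposal is correct and takes essentially the route the authors intend: the paper states this corollary without a written proof, calling it an ``elementary consequence'' of \cref{conj}, and the intended argument is exactly the one you give (move $\PR$ into the cyclotomic alcove via \cref{conj}, identify the target with $R^{\Lambda}_{w\Ldot{\Lambda}\alpha}$, then apply the categorification theorem for cyclotomic KLR algebras together with $\Lambda - w\Ldot{\Lambda}\alpha = w(\Lambda-\alpha)$ and Weyl-invariance of weight multiplicities). Your extra caution about splitting fields is not actually needed -- simple modules over cyclotomic KLR algebras are absolutely irreducible over any field -- but it does no harm, and your reduction to the nonvanishing statement handles it cleanly in any case.
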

Fix a dominant weight $\Lambda-\alpha$ and as in \cite[\S 3.1]{rock}, we let $N_{\lambda-\alpha}$ be the (finite) set of the positive roots $\beta$ such that the $\Lambda-\alpha+\beta$ weight space of $V(\Lambda)$ is not zero.
We define the {\bf Scopes walls} to be the vanishing sets of $\beta \in N_{\lambda-\alpha}$, thought of as a function on the space of pressures, and the {\bf Scopes chambers} to be the chambers cut out by this finite number of hyperplanes.  See \cite[\S 2.2]{rock} for more discussion of these walls and chambers.
\begin{thm}\label{th:Scopes-equal}
    Assume that $\Lambda-\alpha$ is dominant.  The ideals $\mathcal{I}(\chi)$ and $\mathcal{I}(\chi')$ are equal if and only if $\chi$ and $\chi'$   lie in the same Scopes chamber.
\end{thm}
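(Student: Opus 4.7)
The plan is to prove both directions, with the main inputs being Lemma~\ref{lem:only-roots} (reduction of the ideal to escape generators indexed by roots) and Corollary~\ref{cor:number of simples} (vanishing of $\mathcal{R}(\chi,\Lambda,\alpha')$ exactly when $\Lambda-\alpha'$ is not a weight of $V(\Lambda)$). The forward direction follows from an explicit construction; the reverse direction is the technical content.

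For $(\Rightarrow)$, I argue the contrapositive. If $\chi$ and $\chi'$ lie in distinct Scopes chambers, some $\beta \in N_{\Lambda-\alpha}$ satisfies $\chi(\beta) < 0 < \chi'(\beta)$ (after possibly exchanging $\chi$ and $\chi'$). Since $\beta \in N_{\Lambda-\alpha}$, the weight $\Lambda-(\alpha-\beta)$ is a weight of $V(\Lambda)$, so Corollary~\ref{cor:number of simples} gives a nonzero idempotent $e(\bell_0,\Br_0)$ in $\mathcal{R}(\chi',\Lambda,\alpha-\beta)$. Now fix any decomposition $\beta = \alpha_{j_1}+\cdots+\alpha_{j_p}$ as a sum of simple roots and, using the cycle lemma (applicable since $\chi'(\beta) > 0$), reorder so that every partial sum $\alpha_{j_1}+\cdots+\alpha_{j_q}$ has strictly positive $\chi'$-value. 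Prepending $(j_1,\ldots,j_p)$ to the left labels produces an idempotent $e(\bell,\Br_0)$. It lies in $\mathcal{I}(\chi)$ via the escape of the leading $\beta$-block under $\chi$, but it does not lie in $\mathcal{I}(\chi')$: by the choice of ordering, no left prefix has negative $\chi'$-sum; adding the tail $\bell_0$ only increases the $\chi'$-value further since $e(\bell_0,\Br_0)$ survives in $\mathcal{R}(\chi',\Lambda,\alpha-\beta)$; and the right side is unchanged.

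For $(\Leftarrow)$, suppose $\chi,\chi'$ lie in the same Scopes chamber; by symmetry, it suffices to show $\mathcal{I}(\chi) \subseteq \mathcal{I}(\chi')$. By Lemma~\ref{lem:only-roots}, it is enough to consider generators $e(\bell,\Br)$ whose escape subsum is a positive root $\beta$. If $\beta \in N_{\Lambda-\alpha}$, the Scopes chamber hypothesis forces $\chi(\beta)$ and $\chi'(\beta)$ to have the same sign, so the same idempotent is a generator of $\mathcal{I}(\chi')$. If instead $\beta \notin N_{\Lambda-\alpha}$, then $\Lambda-\alpha+\beta$ is not a weight of $V(\Lambda)$, so by Corollary~\ref{cor:number of simples} we have $\mathcal{R}(\chi'',\Lambda,\alpha-\beta) = 0$ for every pressure $\chi''$, forcing the identity (and hence the sub-idempotent on the remaining $\alpha-\beta$ strands) into $\mathcal{I}(\chi')_{\alpha-\beta}$. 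The plan is to propagate this sub-ideal membership to $\mathcal{I}(\chi')_\alpha$ by induction on $h(\alpha)$: sub-escapes on the right lift directly to $\chi'$-escapes of $e(\bell,\Br)$, while sub-escapes on the left combine with the leading $\beta$-segment to either produce a direct $\chi'$-escape (when $\chi'(\beta)$ and the sub-escape $\chi'(\beta')$ align in sign) or reduce to a smaller instance of the same dichotomy on the residual root.

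The main obstacle is the lifting step in $(\Leftarrow)$: when the sub-escape lies on the left with subsum $\beta'$ satisfying $\chi'(\beta') < 0$, the combined prefix has $\chi'$-value $\chi'(\beta)+\chi'(\beta')$, which need not be negative if $\chi'(\beta) > 0$. The expected resolution is to iterate: any persistent obstruction after combining $\beta$ and $\beta'$ should ultimately reduce to an escape root genuinely in $N_{\Lambda-\alpha}$, on which $\chi$ and $\chi'$ agree by the Scopes hypothesis. The slope-data combinatorics of Section~\ref{sec:slope-data} provide the bookkeeping to show termination of this induction, since each step strictly decreases the height of the residual root.
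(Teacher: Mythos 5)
Your proof tries to settle both inclusions by direct manipulation of escape idempotents, and this leads to two gaps. In $(\Rightarrow)$ you construct an idempotent that escapes under $\chi$ but has no $\chi'$-escape prefix on either side, and conclude that it lies outside $\mathcal{I}(\chi')$. That inference is not valid: $\mathcal{I}(\chi')$ is a two-sided ideal containing far more than its listed generators, so ``not an escape generator'' gives no information about membership. The paper's own \cref{ex:2a02a1} exhibits an idempotent (strands labelled $0,0,1,1$ to the right of the red strand) with no escape prefix under $\PRz$ that is nonetheless zero in the cyclotomic quotient. Certifying that your idempotent survives in $\R{\chi'}{\Lambda}{\alpha}$ requires a genuine nonvanishing input, which the combinatorics of prefix sums alone cannot supply. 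In $(\Leftarrow)$, the step that pushes $\mathcal{I}(\chi',\alpha-\beta)$-membership of the truncated idempotent to $\mathcal{I}(\chi',\alpha)$-membership of $e(\bell,\Br)$ is exactly the assertion that $\beta$ is left-addable for $\chi'$; but \cref{lem:addable-roots} establishes addability only for $\chi'$-simple roots, and the escape root $\beta$ furnished by \cref{lem:only-roots} is an arbitrary positive root. You flag this with the ``residual root'' iteration, but the iteration is not carried out and is not self-evidently well-defined: when $\chi'(\beta)>0>\chi'(\beta')$ the combined prefix sum is uncontrolled in sign and $\beta+\beta'$ need not be a root, so the height-descent you sketch lacks a precise measure.

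The paper's proof avoids both problems by never arguing about quotient-nonvanishing of individual idempotents. Using \cref{lem:only-roots}, one may move $\chi$ within an alcove, and inductively pass through alcoves inside the Scopes chamber so that the separating root $\beta$ is $\chi$-simple and the $\mathfrak{sl}_2$-functors of \cref{sl2action} apply. The ideals are then separated by their action on a single module $M=\eF_\beta\R{\chi}{\Lambda}{\alpha-\beta}$: it is nonzero exactly when $\R{\chi}{\Lambda}{\alpha-\beta}\neq 0$, i.e.\ when $\beta$ is a Scopes wall (the same \cref{cor:number of simples} you invoke); $\mathcal{I}(\chi)$ kills $M$ trivially because $M$ is an $\R{\chi}{\Lambda}{\alpha}$-module; and $\mathcal{I}(\chi')\cdot M=M$ because $M$ is generated by idempotents whose leftmost strands sum to $\beta$, which are escape generators for $\chi'$. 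This module-theoretic separation is exactly the step your argument could not complete.
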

Under \cref{conj}, this corresponds to \cite[Lem. 3.2]{rock}; however, in that result and other discussions of RoCK blocks in terms of alcoves, the appearance of alcove  has been purely a book-keeping device, whereas here, the element of the alcove is used directly in our definition of the algebra as the pressure.  
\begin{proof}
    First, assume that $\chi$ and $\chi'$ are in alcoves adjacent to each other across the vanishing hyperplane of a root $\beta$, which is necessarily $\chi$- and $\chi'$-simple.  By symmetry, we can assume that $\chi(\beta)>0 >\chi'(\beta)$.  Since all other roots have the same sign on $\chi$ and $\chi'$, we have $\mathcal{I}(\chi)\neq \mathcal{I}(\chi')$ if and only if there is a non-zero idempotent in $\R{\PR}{\Lambda}{\alpha}$ such that the leftmost strands have labels summing to $\beta$.  This is equivalent to asking that $\eE_{\beta}M\neq 0$ for some $\R{\PR}{\Lambda}{\alpha}$-module. If $\R{\PR}{\Lambda}{\alpha-\beta}=0$, then obviously $\eE_{\beta}M= 0$ for all modules, and if $\R{\PR}{\Lambda}{\alpha-\beta}\neq 0$, then $M=\eF_{\beta}\R{\PR}{\Lambda}{\alpha-\beta}$ shows that $\mathcal{I}(\chi)\neq \mathcal{I}(\chi')$.  Thus, indeed $\mathcal{I}(\chi)\neq \mathcal{I}(\chi')$ if and only $\beta$ defines a Scopes wall.
    
    Of course, applying this inductively shows that if $\chi,\chi'$ are in the same Scopes chamber, then $\mathcal{I}(\chi)= \mathcal{I}(\chi')$. If not, let $\beta$ be a Scopes wall separating $\chi$ and $\chi'$ which is adjacent to the Scopes chamber for $\chi$.  For simplicity, assume that $\chi(\beta)>0 >\chi'(\beta)$. If the signs are reversed, the argument is the same after we swap the roles of left and right.

    As above, we note that the fact that $\beta$ is a Scopes wall implies the module $M=\eF_{\beta}\R{\PR}{\Lambda}{\alpha-\beta}$ is not 0; by definition, multiplication by $\mathcal{I}(\chi)$ is 0 on this module.  On the other hand, by construction, this module is generated by the image of the idempotent $1_{\beta,\alpha}$, that is, by the image of idempotents where the leftmost strands sum to $\beta$. However, such an idempotent lies in $\mathcal{I}(\chi')$, so $\mathcal{I}(\chi')\cdot M=M$. This shows that in $\Rz{\Lambda}{\alpha}$, we have $\mathcal{I}(\chi')\neq \mathcal{I}(\chi)$.  
    \end{proof}

    There is one more redundancy implicit in this result which is especially important in the level one case: if $w\Ldot{\Lambda}\alpha=w'\Ldot{\Lambda}\alpha$, then o$\R{\PR}{\Lambda}{w\Ldot{\Lambda}\alpha}\cong \R{\PR}{\Lambda}{w'\Ldot{\Lambda}\alpha}$, so we have a Morita equivalence $\R{w^{-1}\PR}{\Lambda}{\alpha}\sim \R{(w')^{-1}\PR}{\Lambda}{\alpha}$, even though $w^{-1}\PR$ and $(w')^{-1}\PR$ may not be in the same Scopes chamber.  One way to fix this redundancy is to consider the action of the stabilizer $W_{\alpha}\subset \affweyl$ of $\alpha$ under this action; if we still assume that $\Lambda-\alpha$ is dominant, this is the parabolic subgroup generated by reflections for the simple coroots such that $\langle \Lambda-\alpha,\alpha_i^{\vee}\rangle=0$.  We must have $w'\in wW_{\alpha}$, and so we can make our choice of $w$ unique by assuming it has minimal length in this coset.  

    The second author encapsulated these equivalences of blocks in \cite{rock} in the notion of {\bf Scopes equivalence}.   Define a pressure by $\PRz(\alpha_i)=-1/e$; since every positive root is of negative pressure for $\PRz$, we have $\R{\PRz}{\Lambda}{\alpha}\cong R^{\Lambda}_{\alpha}$ is a cyclotomic quotient.  Note that $w$ is minimal length in $wW_{\alpha}$ if and only if $\al_i^{\vee}(w^{-1}\PRz)<0$ for all $s_i\in W_{\alpha}$.  
    
    We can then restate \cite[Th. A]{rock} for Ariki--Koike algebras as a corollary:
    \begin{cor}
        Assume that $w,w'$ are shortest left coset representatives for $W_{\alpha}$.  The blocks $R^{\Lambda}_{w\Ldot{\Lambda}\alpha}$ and $R^{\Lambda}_{w'\Ldot{\Lambda}\alpha}$ are Scopes equivalent if and only if $\mathcal{I}(w^{-1}\PRz)=\mathcal{I}((w')^{-1}\PRz)$.  
    \end{cor}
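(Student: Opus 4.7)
The plan is to translate the statement through the main Morita equivalence of the paper and then match the resulting condition with the characterization of Scopes equivalence from \cite[Th.~A]{rock}. Setting $\PR=\PRz$ in \Cref{conj}, we obtain a chain of Morita equivalences
\[
R^{\Lambda}_{w\Ldot{\Lambda}\alpha}\;\cong\;\R{\PRz}{\Lambda}{w\Ldot{\Lambda}\alpha}\;\sim\;\R{w^{-1}\PRz}{\Lambda}{\alpha},
\]
and likewise for $w'$. Hence the question of Morita (in particular Scopes) equivalence of the two blocks reduces to a comparison of the steadied quotients $\R{w^{-1}\PRz}{\Lambda}{\alpha}$ and $\R{(w')^{-1}\PRz}{\Lambda}{\alpha}$, both viewed as quotients of the common algebra $\mathcal{R}_0(\Lambda,\alpha)$.

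The first direction is essentially formal. If $\mathcal{I}(w^{-1}\PRz)=\mathcal{I}((w')^{-1}\PRz)$, then the two steadied quotients are literally equal as subquotients of $\mathcal{R}_0(\Lambda,\alpha)$, and splicing this isomorphism between the two Morita equivalences above yields a Morita equivalence between the blocks. To see that this Morita equivalence is a Scopes equivalence, I would factor a minimal gallery from the Scopes chamber of $w^{-1}\PRz$ to that of $(w')^{-1}\PRz$ through walls which are \emph{not} Scopes walls of $\alpha$; by \Cref{th:Scopes-equal}, each crossing leaves $\mathcal{I}$ unchanged, so the composite equivalence is obtained from the chain of $\CR{}{}\circ \tw{}{}$ equivalences used in the proof of \Cref{conj}, each of which collapses to an isomorphism (rather than a genuine derived reflection) exactly when the wall is not Scopes, reproducing precisely the elementary Scopes equivalences of \cite[Th.~A]{rock}.

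For the converse, suppose the two blocks are Scopes equivalent. By \cite[Th.~A]{rock}, this is equivalent to the existence of a gallery from the alcove for $w$ to the alcove for $w'$ which crosses only non-Scopes walls for the dominant weight $\Lambda-\alpha$. Under the identification $w\mapsto w^{-1}\PRz$ -- which, thanks to the minimal length hypothesis on $w,w'$, is a bijection between minimal coset representatives for $W_\alpha$ and Scopes chambers of pressure space whose characteristic functions on the simple coroots have the correct signs -- this gallery becomes a path in pressure space between $w^{-1}\PRz$ and $(w')^{-1}\PRz$ which crosses no Scopes wall (in the sense of the present paper, cut out by $\beta\in N_{\Lambda-\alpha}$). \Cref{th:Scopes-equal} then gives $\mathcal{I}(w^{-1}\PRz)=\mathcal{I}((w')^{-1}\PRz)$.

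The main obstacle is the bookkeeping between the alcove-theoretic description of Scopes equivalence in \cite{rock} (phrased in terms of the affine Weyl group acting on weights) and the pressure-theoretic one used in this paper. The two indexing schemes are related by the contragredient action, and the minimality of $w,w'$ in their $W_\alpha$-cosets is exactly what is needed to ensure that the correspondence between Weyl group elements and Scopes chambers of pressure space is a bijection on both sides, so that equality of ideals and sameness of Scopes chambers translate faithfully. Once this dictionary is in place, both directions above are essentially immediate from \Cref{conj} and \Cref{th:Scopes-equal}.
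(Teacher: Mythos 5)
The paper gives this corollary without a proof, stating simply that it is a ``restatement of \cite[Th.~A]{rock}.''  Unwinding, the intended argument is a two-step chain of equivalences: by \cite[Th.~A]{rock}, the blocks $R^{\Lambda}_{w\Ldot{\Lambda}\alpha}$ and $R^{\Lambda}_{w'\Ldot{\Lambda}\alpha}$ are Scopes equivalent if and only if $w^{-1}\PRz$ and $(w')^{-1}\PRz$ lie in the same Scopes chamber (here the minimality of $w,w'$, noted in the paragraph preceding the corollary, identifies coset representatives with alcoves in a fixed Weyl chamber); and by \cref{th:Scopes-equal}, the two pressures lie in the same Scopes chamber if and only if $\mathcal{I}(w^{-1}\PRz)=\mathcal{I}((w')^{-1}\PRz)$.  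Your proposal captures this, and your converse direction is essentially verbatim the intended argument.

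Your ``if'' direction, however, takes an unnecessary and slightly misleading detour through the Morita-equivalence machinery of \cref{conj}.  You first splice the equality of steadied quotients between two chains of $\CR{}{}\circ\tw{}{}$ equivalences to produce \emph{some} Morita equivalence of blocks, and then try to argue that this particular equivalence ``is'' a Scopes equivalence because the wall crossings ``collapse to isomorphisms.''  This is both circular (you end up citing \cite[Th.~A]{rock} anyway to identify the result as a Scopes equivalence) and not quite accurate as stated: the individual equivalences $\CR{\alpha}{s_\beta\Ldot{\Lambda}\alpha}\circ\tw{s_\beta\PR}{\PR}$ relate algebras for \emph{different} weights $\alpha$ and $s_\beta\Ldot{\Lambda}\alpha$, so they are not literal isomorphisms of a single algebra even when $\beta$ is not a Scopes wall.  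The cleaner route is the one you essentially take in the converse: \cref{th:Scopes-equal} converts ``equal ideals'' to ``same Scopes chamber,'' and \cite[Th.~A]{rock} converts ``same Scopes chamber'' to ``Scopes equivalent.''  No appeal to \cref{conj} is needed.
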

    Note that there are equivalences between blocks $R^{\Lambda}_{w\Ldot{\Lambda}\alpha}$ which are not induced by Scopes equivalences; for example, there is one induced by a diagram automorphism used in \cite{DA24}. 

\section{RoCK blocks of symmetric groups}
\label{sec:RoCK}
One particularly interesting case is when $\Lambda=\Lambda_0$ is a fundamental weight.   In this case, when $\K=\mathbb{F}_p$ and the quantum characteristic $e=p$ is prime, the cyclotomic quotient $R^{\Lambda_0}_{\alpha}\cong \R{\PRz}{\Lambda_0}{\alpha}$ is a block algebra of $\mathbb{F}_p\Sym_m$ for $m$ the total number of black strands; on the other hand, if $\K=\C$, then this is a block algebra of a Hecke algebra at a primitive $e$th root of unity.  

A basic fact about this case is that for a unique integer $d$, we have $\alpha=w\bullet_{\Lambda_0}d\delta$ for some $w\in \affweyl$.  
One can easily calculate that the stabilizer $W_{d\delta}=W$ is just the finite Weyl group, thus we will always 
take $w$ to be a shortest left coset representative for the finite Weyl group $W$, which makes $w$ unique as well.  

By \cref{conj}, we thus have a Morita equivalence of $R^{\Lambda_0}_{\alpha}$ to $\R{w^{-1}\PRz}{\Lambda_0}{d\delta}$. Since $w^{-1}$ is a shortest right coset representative, $w^{-1}\PRz(\al_i)<0$ unless $i=0$; unless $w=1$, we have $w^{-1}\PRz(\al_0)>0$.  
The Scopes walls in this case are given by $\PR(\beta)=d-1,d-2,\dots, -d+1$ for $\beta$ a root of the finite root system; see \cite[\S 4.1]{rock} for a discussion of this fact in this language, though the importance of this condition is clear already in \cite{scopesCartanMatrices1991}.   
There has been a particular focus on the unique Scopes chamber satisfying these conditions that is a translate of the anti-dominant Weyl chamber, defined by the inequalities $\chi(\al_i)<1-d$ for all $i\neq 0$.  The blocks $R^{\Lambda_0}_{w\bullet_{\Lambda_0}d\delta}$ such that $w^{-1}\PRz$ is in this Scopes chamber are called {\bf RoCK} or {\bf Rouquier}; all of these blocks are equivalent by \cite[Th. 4.2]{scopesCartanMatrices1991}, which is a construction of the Chuang-Rouquier equivalences in the case where they are exact {\it avant la lettre}.  For simplicity, we choose \begin{equation}
    \PR_{\mathsf{RoCK}}(\al_i)=\begin{cases}
    1-d-1/e & i\neq 0\\
    (e-1)(d-1)-1/e & i=0
\end{cases}
\end{equation}
as a representative element of this class.  
The algebra $\R{\PR_{\mathsf{RoCK}}}{\Lambda_0}{d\delta}$ thus provides a single, unique representative for the Morita equivalence class of RoCK blocks, the ``local model'' discussed in the introduction.

Since these blocks are already extensively studied, it is natural to consider how our results manifest here.  We'll show directly that $\R{\PR_{\mathsf{RoCK}}}{\Lambda_0}{d\delta}$ is Morita equivalent to the Turner double, originally shown to be Morita equivalent to a RoCK block $R^{\Lambda}_{w\Ldot{\Lambda_0}(d\delta)}$ by Evseev--Kleshchev \cite{EK2}.  

In particular, we draw a direct connection between the results of Chuang--Kessar \cite{CK}, showing that there is a projective $R^{\Lambda_0}_{w\bullet_{\Lambda_0}d\delta}$-module whose endomorphisms are isomorphic to the wreath product of $\Sym_d$ with the algebra of $R^{\Lambda_0}_{\delta}$ (which in the case $\K=\mathbb{F}_p$ and $e=p$ is prime is the principal block algebra of $\Sym_p$).  Furthermore, if $d<e$, then this projective generates the block and thus induces a Morita equivalence. 

In order to understand this connection, we consider the map 
$\iota\colon R_{d\delta}\to \R{\PR_{\mathsf{RoCK}}}{\Lambda_0}{d\delta}$ given by placing a single red strand on the left-hand side of a diagram in $R_{d\delta}$.  Let $e_+$ be the image of the identity under this map; this is exactly the sum of all idempotents where there are no black strands to the left of the red.  
\begin{lem}
    The homomorphism $\iota$ factors through the semi-cuspidal quotient $C_{d\delta}$ and surjects onto $e_+\R{\PR_{\mathsf{RoCK}}}{\Lambda_0}{d\delta}e_+$.
\end{lem}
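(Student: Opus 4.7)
The proof splits into showing surjectivity onto $e_+ \R{\PR_{\mathsf{RoCK}}}{\Lambda_0}{d\delta} e_+$ and showing that the kernel of $\iota$ contains the non-cuspidal ideal of $R_{d\delta}$.

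For surjectivity, I would take $x \in e_+ \R{\PR_{\mathsf{RoCK}}}{\Lambda_0}{d\delta} e_+$ represented as a linear combination of KLRW diagrams with boundary $e_+$, and measure complexity by the total number of intersections of black strands with the red strand. Since $\langle \Lambda_0, \alpha_i\rangle = \delta_{i,0}$, the cyclotomic relation \eqref{cyclotomic} expresses a bent $i$-strand (which crosses the red twice) as a straight $i$-strand with $\delta_{i,0}$ extra dots, reducing red--black crossings by two at the cost of at most one dot. The red-crossing relation \eqref{redcrossing} handles configurations where black--black crossings interact with the red strand in terms of straight-line pieces with compensating dots. Inducting on the red--black crossing count reduces any diagram in $e_+ \R{\PR_{\mathsf{RoCK}}}{\Lambda_0}{d\delta} e_+$ to a linear combination of diagrams whose black strands stay on the right of the red at all heights, and such diagrams are precisely the images of KLR diagrams under $\iota$.

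For the factoring, I must show that for every $e(\Bi\Bj)$ with $\alpha_{\Bi} \prec \alpha_{\Bj}$ in the convex preorder, $\iota(e(\Bi\Bj))$ vanishes in $\R{\PR_{\mathsf{RoCK}}}{\Lambda_0}{d\delta}$. Using $\alpha_{\Bi}+\alpha_{\Bj}=d\delta$, this hypothesis is equivalent to the slope of $\alpha_{\Bj}$ strictly exceeding $-1/e$. Applying the steadied relation \eqref{steadiedidemp} directly to the tail $\Bj$ is not sufficient, since slope above $-1/e$ does not itself force $\chi(\alpha_{\Bj})>0$. Instead I use the cuspidal decomposition theory for $R_{\alpha_{\Bj}}$-modules \cite[\S 2]{tingleyMirkovicVilonenPolytopes2016}: modulo the non-cuspidal ideal of $R_{\alpha_{\Bj}}$ itself, $e(\Bj)$ rewrites as a sum of idempotents $e(\Bj')$ whose rightmost block of strands has labels summing to a cuspidal root $\beta$ of maximal slope (necessarily above $-1/e$). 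For such $\beta \leq d\delta$, the RoCK chamber inequalities $\chi(\alpha_i)<1-d$ for $i\neq 0$, combined with the classification of affine positive roots and the bound $\beta \leq d\delta$, force $\chi(\beta)>0$; then \eqref{steadiedidemp} kills $\iota(e(\Bi\Bj'))$. The non-cuspidal corrections within $R_{\alpha_{\Bj}}$ have strictly smaller complexity and are handled by induction on the height of $\alpha_{\Bj}$.

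The main obstacle is the factoring step, where one must bridge the convex preorder (a statement about averaged slopes of root sums) with the steadied relation (which demands strict positive pressure on concrete rightmost partial sums of labels). The RoCK-chamber condition and the cuspidal structure of $R_{\alpha_{\Bj}}$ together supply the required link, but some care is needed in the induction to handle edge cases where a maximal-slope cuspidal summand is close to $d\delta$ itself: here one must exploit the combinatorics of $\PR_{\mathsf{RoCK}}$ more delicately, possibly by further decomposing $\alpha_{\Bj}$ or by combining the steadied relation with KLR moves that reorder strands within $\Bj$ at the cost of lower-order corrections.
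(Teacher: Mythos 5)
Your surjectivity argument is essentially the same as the paper's: both rewrite a diagram with boundary $e_+$ by reducing red--black crossings, using \eqref{cyclotomic} and \eqref{redcrossing}, until the red strand is cleared and the result lies in the image of $\iota$. That part is fine.

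For the factoring, you correctly diagnose that $\mathrm{slope}(\alpha_{\Bj})>-1/e$ does not imply $\chi(\alpha_{\Bj})>0$, so the steadied relation cannot be applied directly. But your proposed repair --- pass to a cuspidal decomposition of $e(\Bj)$, extract a maximal-slope cuspidal root $\beta$, and argue that the RoCK inequalities force $\chi(\beta)>0$ --- does not close the gap, because that last claim is false. Take $e=2$, $d=2$, so $\chi(\al_0)=1/2$ and $\chi(\al_1)=-3/2$, and consider $e(1,0,0,1)\in R_{2\delta}$: the split $\Bi=(1)$, $\Bj=(0,0,1)$ has $\alpha_{\Bi}=\al_1\prec \alpha_{\Bj}=2\al_0+\al_1$, so this idempotent is a generator of the non-cuspidal ideal. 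Here $\alpha_{\Bj}=2\al_0+\al_1=-\al_1+2\delta$ is itself a (real, cuspidal) root of slope $-1/6>-1/2$, yet $\chi(\alpha_{\Bj})=-1/2<0$; indeed every rightmost partial sum of $(1,0,0,1)$ has nonpositive pressure, so the right-hand steadied relation never fires, and no decomposition of $\Bj$ into cuspidal pieces produces a positive-pressure root on the far right. So your argument, like a literal reading of the paper's proof, cannot kill $\iota(e(1,0,0,1))$.

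The missing ingredient is the \emph{left} side of the red strand. Since $\Lambda=\Lambda_0$, any strand with label $i\neq 0$ has $\langle\Lambda_0,\alpha_i\rangle=0$ and passes across the red strand freely via \eqref{cyclotomic}. For a non-cuspidal generator $e(\Bi\Bj)$ the prefix has $\mathrm{slope}(\alpha_{\Bi})<-1/e$, and in particular $\chi(\alpha_{\Bi})<0$; sliding the relevant leftmost labels of $\Bi$ across the red factors $\iota(e(\Bi\Bj))$ through an idempotent $e(\bell,\Br)$ with $\chi(\alpha_{(\ell_1,\dots,\ell_m)})<0$, which lies in $\mathcal{I}(\PR_{\mathsf{RoCK}})$. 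In the example above, one slides the leftmost $1$-strand to obtain $e((1),(0,0,1))$, and $\chi(\al_1)<0$ kills it. A complete proof must combine this sliding mechanism with the right-hand steadied relation (the two cases corresponding to whether the prefix begins with a $0$-strand or not), or more systematically use the slope-datum formalism of \cref{sec:slope-data} on both sides of the red strand; arguing only via suffixes of $\Bj$ cannot work.
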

\begin{proof}    
If an idempotent in $R_{d\delta}$ is of the form $(j_k,\dots, j_1)$ with $\al_{(j_p,\dots, j_{1})}=\beta$ a root such that $\PR_{\mathsf{RoCK}}(\beta)>0$ or equivalently, $\beta\prec \delta$, then its image in $\R{\PR_{\mathsf{RoCK}}}{\Lambda_0}{d\delta}$ is zero. 

Since there are no strands to the left of the red at the top or bottom of a diagram in $e_+\R{\PR_{\mathsf{RoCK}}}{\Lambda_0}{d\delta}e_+$, whenever we rewrite a diagram to be a sum of diagrams with a minimal number of crossings, we will avoid all crossings over the red strand, and thus lie in the image of $\iota$.
\end{proof} 

In \cite{EK2}, it's shown that a RoCK block is Morita equivalent to a quotient $C_{\rho,d}$ of $C_{d\delta}$;  this is not explicitly stated, but by \cite[Rk. 5.21]{EK2}, the algebra $C_{\rho,d}$ is an idempotent truncation of the block algebra for a RoCK block, and the Morita equivalence is established in the proof of \cite[Th. 8.28]{EK2}.  We'll reconfirm this more carefully in the proof of the following theorem, which will take up the next section of the paper:
\begin{thm}\label{th:EK-TD}
    The image $e_+\R{\PR_{\mathsf{RoCK}}}{\Lambda_0}{d\delta}e_+$ of $\iota$  is isomorphic to $C_{\rho,d}$ and this map induces a Morita equivalence $\R{\PR_{\mathsf{RoCK}}}{\Lambda_0}{d\delta}\sim C_{\rho,d}$, and thus a Morita equivalence to the Turner double $D(n,d)$ for $n>0$.
\end{thm}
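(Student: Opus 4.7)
The plan is to prove the theorem in three steps: identify the algebra $e_+\R{\PR_{\mathsf{RoCK}}}{\Lambda_0}{d\delta}e_+$ with $C_{\rho,d}$ explicitly as algebras, prove that $e_+$ is a full idempotent so that this identification induces a Morita equivalence, and then invoke \cite[Lem. 8.27]{EK2} to pass to the Turner double.

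For the first step, the preceding lemma already gives a surjection $C_{d\delta} \twoheadrightarrow e_+\R{\PR_{\mathsf{RoCK}}}{\Lambda_0}{d\delta}e_+$, so we must compute its kernel. The extra relations come from two mechanisms: the cyclotomic relation \eqref{cyclotomic} forces any $0$-strand adjacent to the red strand with one dot to equal the same strand bent around the red strand, and the red-crossing relation \eqref{redcrossing} produces decompositions into dotless diagrams (with bubbles absorbed elsewhere). I would systematically translate each defining relation of $C_{\rho,d}$ from \cite[(5.17)]{EK2} into one of these forced identities in $e_+ R e_+$, giving a surjection $C_{\rho,d} \twoheadrightarrow e_+ R e_+$. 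To upgrade this to an isomorphism, I would exhibit an explicit spanning set of $e_+ R e_+$ matching a known basis of $C_{\rho,d}$, using \cref{cor:R-free} to get a graded rank comparison rather than working dimension by dimension.

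For the Morita equivalence, I need that $\R{\PR_{\mathsf{RoCK}}}{\Lambda_0}{d\delta} e_+$ is a projective generator, equivalently that $e_+ L \neq 0$ for every simple $L$. The cleanest strategy combines counting with a direct argument: by \cref{conj} applied repeatedly and \cite[Th. 8.28]{EK2}, the algebra $\R{\PR_{\mathsf{RoCK}}}{\Lambda_0}{d\delta}$ is abstractly Morita equivalent to $C_{\rho,d}$, so by \cref{cor:number of simples} both have the same number of simples, and once $e_+ R e_+ \cong C_{\rho,d}$ is known, it suffices to check injectivity of $L \mapsto e_+ L$ on simples. For this, consider the slope datum $\boldsymbol{\gamma}_L$ attached to $L$ by \cref{thm:standard-standard} and show that some empty-left idempotent $e(\emptyset, \Br)$ satisfies $\boldsymbol{\gamma}(\emptyset, \Br) \leq \boldsymbol{\gamma}_L$. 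In the RoCK chamber, this reduces to the observation that since $\langle \Lambda_0, \al_i \rangle = 0$ for $i \neq 0$, any $i \neq 0$ strand can be freely crossed over the red strand, which gives the flexibility to realize the simple's support on purely right-sided idempotents.

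The main obstacle is the last part: the simples whose slope data $\boldsymbol{\gamma}_L$ have nontrivial negative-index components $\gamma_{-k}$. These components are built from $\chi$-positive real roots, all of which must contain $\al_0$; by the structure of the RoCK pressure, each such $\gamma_{-k}$ is of the form $\al_0 + n\delta$ (or similar), and crossing its strands over the red strand introduces dots governed by \eqref{cyclotomic}. Showing that this crossing produces a nonzero morphism into an $e_+$-supported module is the technical heart of the argument, and I expect it to require the explicit resolution \eqref{eq:divided power sequence} together with the identification of $\gamma_0$ with an iterated $\eF$-image. Once this is established, the Morita equivalence $\R{\PR_{\mathsf{RoCK}}}{\Lambda_0}{d\delta} \sim C_{\rho,d}$ follows formally, and \cite[Lem. 8.27]{EK2} transports this to the Turner double $D(n,d)$ for $n > 0$.
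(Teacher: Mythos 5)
Your overall outline is sound, but you underestimate the difficulty of the first step and overestimate the difficulty of the second.

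For the identification $e_+\R{\PR_{\mathsf{RoCK}}}{\Lambda_0}{d\delta}e_+\cong C_{\rho,d}$, you propose to translate the relations of $C_{\rho,d}$ from \cite[(5.17)]{EK2} one by one into relations forced by \eqref{cyclotomic}--\eqref{redcrossing} and then match bases using \cref{cor:R-free}. This is a much more painful road than the one the paper actually takes, and it is unclear that it would succeed without effectively redoing a large part of \cite{EK2}. The paper's key observation is that one never needs to compare the two quotients of $C_{d\delta}$ directly. Both $C_{\rho,d}$ and $e_+\R{\PR_{\mathsf{RoCK}}}{\Lambda_0}{d\delta}e_+$ are quotients of $C^{\Z}_{d\delta}$ that are free as $\Z$-modules, and \cref{lem:quotient} reduces the question of whether their kernels agree to whether the kernels agree after truncation by the much smaller idempotent $\oned$ (valid because $\oned$ generates over $\Q$). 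After truncating by $\oned$, both quotients become the wreath product $\mathscr{W}_d$: for $C_{\rho,d}$ this is \cite[Lem.\ 8.10]{EK2}, and for the steadied quotient this is \cref{th:KM}, which is established by showing that the extra relation imposed by the red strand corresponds exactly under the Kleshchev--Muth isomorphism to the ideal $(z_1)$ of the affine zigzag algebra. This avoids any relation-by-relation matching and is the genuine technical content of the proof. Your plan misses this reduction entirely.

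For the Morita equivalence, you actually have the argument, but you do not recognize it. Once $e_+\R{\PR_{\mathsf{RoCK}}}{\Lambda_0}{d\delta}e_+\cong C_{\rho,d}$ is established, a pure count of simples (using \cref{cor:number of simples}, \cite[Th.\ 5.7]{EK2}, and the inequalities $\ell(D(n,d))\le\ell(C_{\rho,d})$ from \cite[Lem.\ 8.27, Th.\ 8.28]{EK2}) shows that $e_+$ kills no simple module and hence generates. The map $L\mapsto e_+L$ is automatically injective on the simples it does not annihilate, so equality of cardinalities finishes the job with no further work. Your subsequent excursion into slope data is both unnecessary and ill-posed: slope data in the sense of \cref{sec:slope-data} are attached to idempotents $e(\bell,\Br)$, not to simple modules, and \cref{thm:standard-standard} does not assign a distinguished $\boldsymbol{\gamma}_L$ to $L$. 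What you label the ``technical heart'' is in fact a detour; the paper's technical heart is the $\oned$-truncation argument in the first step.
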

Turner \cite[Ch. X]{Turner} and Kleshchev-Muth \cite[\S 7.9]{KMschurifying} both define quasi-hereditary covers $\mathcal{Q}(n,d)$ and $T^{Z}_{\mathfrak{z}}(n,d)$ of $D(n,d)$; it is unknown at the moment whether these quasi-hereditary covers are equivalent.    A natural extension of \cref{th:EK-TD} would be to construct a Morita equivalence of either or both of these covers with a steadied quotient incorporating a non-trivial weighting (as in \cite[Def. 2.22]{WebwKLR}); we hope to return to this question in future work.

\subsection{The proof of \cref{th:EK-TD}}
As in \cite{EK2}, we first consider the idempotents that correspond to Chuang--Kessar's projective.  Let $\Ba_i=(i,i+1,\dots, e-1,i-1,\dots, 0)$.  Let $\oned$ be the sum of all idempotents obtained by concatenating $d$ of these sequences.  This idempotent has the property that $C_{d\delta}\oned\cong \Delta_{d\delta}$, as noted in \cite[\S 5.4]{kleshchevAffineZigzag2018}.  

By \cite[Th. 6.16]{kleshchevAffineZigzag2018}, we have an isomorphism $\oned C_{d\delta} \oned\cong \Zdaff(\mathsf{ A}_{e-1}^{(1)})$
given by the local map shown in \Cref{fig:KM-iso}.
\begin{figure}
    \centering
    \begin{tikzpicture}[scale=0.8]
    \idm{0}{0}{i} 
    \draw[thick] [|->] (1,1) -- (3,1);
    \idm{4}{0}{i}
    \idm{5}{0}{i+1} \dcf{6}{0}{\dots}
    \idm{7}{0}{e-1}
    \idm{8}{0}{i}
    \dcf{9}{0}{\dots}
    \idm{10}{0}{0}
  \dbcf{9}{0}{\dots}
\didm{0}{-4}{i}{i+1}
\draw[thick] [->] (0,-2) -- (0,-3);
\draw[thick] [|->] (1,-3) -- (3,-3);
\permd{4}{-4}{1}{i+1}{i+1}
\dbcf{5.5}{-4}{\dots}
\dtcf{6.5}{-4}{\dots}
\permd{7}{-4}{1}{e-1}{e-1}
\permd{8}{-4}{-4}{i}{i}
\didm{10}{-4}{i-1}{i-1}
\dbcf{12}{-4}{\dots}
\dtcf{12}{-4}{\dots}
\didm{13}{-4}{0}{0}

\didm{0}{-8}{i}{i-1}
\draw[thick] [->] (0,-6) -- (0,-7);
\draw[thick] [|->] (1,-7) -- (3,-7);

\dbcf{6.5}{-8}{\dots}
\dtcf{5.5}{-8}{\dots}
\permd{4}{-8}{5}{i-1}{i-1}
\permd{5}{-8}{-1}{i}{i}
\permd{8}{-8}{-1}{e-1}{e-1}
\didm{11}{-8}{i-2}{i-2}
\dbcf{13}{-8}{\dots}
\dtcf{13}{-8}{\dots}
\didm{14}{-8}{0}{0}

\idm{-1}{-12}{i}
\point{(-1,-11)}
\dm{-0.5}{-12}
\idm{0}{-12}{i}
\draw[thick] [|->] (1,-11) -- (3,-11);
\didm{4}{-12}{i}{i}
\dcf{5}{-12}{\dots}
\didm{6}{-12}{0}{0}
\point{(4,-11)}

\idm{0}{-16}{i}
\point{(0,-15)}
\draw[thick] [|->] (1,-15) -- (3,-15);
\didm{4}{-16}{i}{i}
\dcf{5}{-16}{\dots}
\didm{6}{-16}{0}{0}
\point{(6,-15)}

\draw[thick] (-2,-20) -- (0,-18);
\draw[thick] (0,-20) -- (-2,-18);
\node at (-2,-20) [anchor=north] {\tiny{$i$}};
\node at (0,-20) [anchor=north] {\tiny{$j$}};
\draw[thick] [|->] (1,-19) -- (3,-19);

\permd{4}{-20}{5}{i}{i}
\dbcf{5}{-20}{\dots}
\dbcf{6}{-20}{e-1}
\dbcf{7}{-20}{i-1}
\dbcf{8}{-20}{\dots}
\permd{9}{-20}{5}{0}{0}
\permd{10}{-20}{-6}{j}{j}
\dbcf{11}{-20}{\dots}
\dbcf{12}{-20}{e-1}
\dbcf{13}{-20}{j-1}
\dbcf{14}{-20}{\dots}
\permd{14.5}{-20}{-6}{0}{0}

\end{tikzpicture}
\caption{The Kleshchev-Muth isomorphism}
    \label{fig:KM-iso}
\end{figure}
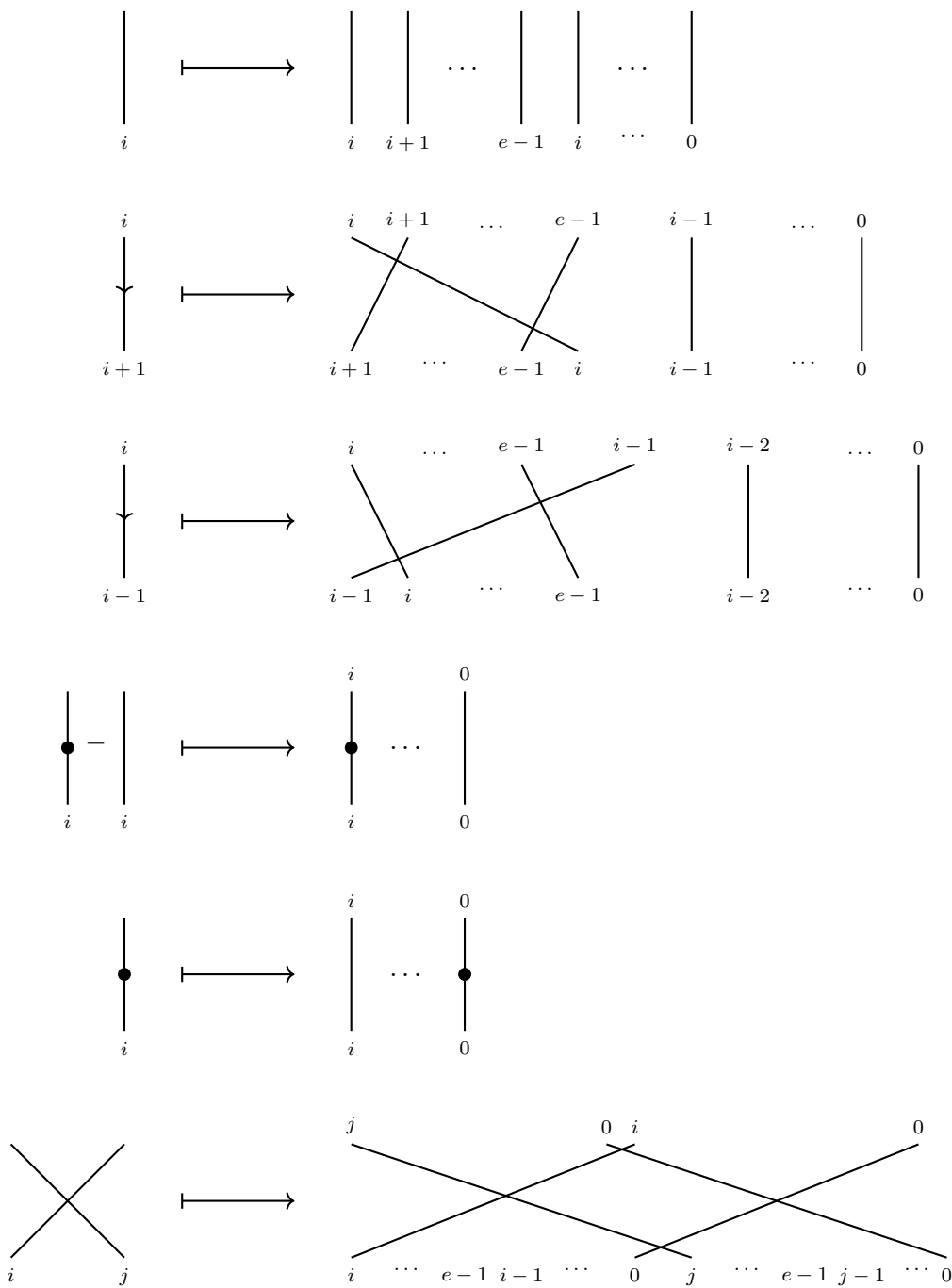

Let $\mathscr{W}_d$ be the wreath product $\mathsf{Z}^{\otimes d}\rtimes \K \Sym_d$ of the zigzag algebra $\mathsf{Z}$ with $\Sym_d$.

\begin{thm}\label{th:KM} The map 
$\Zdaff(\mathsf{ A}_{e-1}^{(1)})\to \oned \R{\PR_{\mathsf{RoCK}}}{\Lambda_0}{d\delta} \oned$  induces an isomorphism $\mathscr{W}_d\cong \oned\R{\PR_{\mathsf{RoCK}}}{\Lambda_0}{d\delta} \oned$.
\end{thm}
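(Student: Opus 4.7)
My plan is to build on the Kleshchev--Muth isomorphism $\oned C_{d\delta} \oned \cong \Zdaff(\mathsf{A}^{(1)}_{e-1})$ (recalled immediately before the theorem), which factors the map of the statement as
\[
\Zdaff \xrightarrow{\cong} \oned C_{d\delta} \oned \twoheadrightarrow \oned \R{\PR_{\mathsf{RoCK}}}{\Lambda_0}{d\delta} \oned,
\]
where the second map is the restriction of the surjection $\oned C_{d\delta}\oned\to\oned\R{\PR_{\mathsf{RoCK}}}{\Lambda_0}{d\delta}\oned$ established in the preceding lemma. Recall that the affine zigzag algebra is an ``affinization'' of $\mathscr{W}_d$: it is generated by the elements assembling $\mathscr{W}_d$ together with $d$ polynomial generators $y_1,\ldots,y_d$ (one per block), with $\mathscr{W}_d = \Zdaff/\langle y_1,\ldots,y_d\rangle$ and a $\K$-module decomposition $\Zdaff \cong \mathscr{W}_d\otimes \K[y_1,\ldots,y_d]$. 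The theorem therefore reduces to showing (i) each $y_i$ vanishes in $\oned \R{\PR_{\mathsf{RoCK}}}{\Lambda_0}{d\delta} \oned$, and (ii) the induced map $\mathscr{W}_d\to \oned \R{\PR_{\mathsf{RoCK}}}{\Lambda_0}{d\delta} \oned$ is injective.

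The heart of the argument is (i). By \Cref{fig:KM-iso}, each $y_i$ is sent to a dot placed on a distinguished strand within the $i$-th $\Ba$-block. Using the block-permutation and cross-block commutation generators of $\Zdaff$ (also given explicitly in the figure), I would first reduce to the case $i=1$, where the dot sits in the leftmost block, adjacent to the red strand. Summing over the starting residues of this block, I would distinguish two cases based on the label carried by the dotted strand. When this label is $0$, the cyclotomic relation \eqref{cyclotomic} (using $\langle\Lambda_0,\alpha_0\rangle = 1$) converts the dotted strand into a curve that wraps around the red strand, and expanding this in straight-line idempotents produces either terms that fall outside the support of $\oned$ from one side (hence are killed upon multiplying by $\oned$) or configurations with a left-side labelled by a root in $\mathcal{I}(\PR_{\mathsf{RoCK}})$. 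When the label is $i\neq 0$, the strand slides freely across the red strand ($\langle\Lambda_0,\alpha_i\rangle=0$), and the resulting left-side configuration has outermost label $i$ with $\PR_{\mathsf{RoCK}}(\alpha_i)<0$, so it is killed by \eqref{steadiedidemp}.

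For the injectivity step (ii), I would appeal to the standardization machinery of \Cref{sec:Standardizations}. The idempotent $\oned$ sits in the stratum indexed by the semi-cuspidal slope datum $\boldsymbol{\gamma}=(d\delta)$, and the structure theorem together with the Kleshchev--Muth identification lets one compute the graded $\K$-dimension of $\oned \R{\PR_{\mathsf{RoCK}}}{\Lambda_0}{d\delta} \oned$ and match it against $\dim_\K \mathscr{W}_d$; the finite-rank freeness of \Cref{cor:R-free} guarantees the computation is well-posed. Alternatively one can exhibit a $\K$-basis directly, transporting the ``$\mathscr{W}_d$-part'' of the Kleshchev--Muth basis of $\Zdaff$ through the surjection and checking linear independence in the KLRW diagrammatics. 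Either route promotes the surjection from (i) into an isomorphism.

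\textbf{Main obstacle.} The principal difficulty lies in the vanishing step (i): carrying out the explicit diagrammatic manipulation that converts the image of $y_i$ into a sum of wrapping configurations and then verifying, case-by-case in the label of the relevant strand, that every summand is either in $\mathcal{I}(\PR_{\mathsf{RoCK}})$ or lies outside $\oned\R{\PR_{\mathsf{RoCK}}}{\Lambda_0}{d\delta}\oned$. The book-keeping of the correction terms that arise when expanding the wrapping strand into straight-line idempotents (and tracking how they interact with the RoCK sign pattern of $\PR_{\mathsf{RoCK}}$) is the most delicate point. The injectivity step, by contrast, is expected to be routine once the standardization dimension count is set up cleanly.
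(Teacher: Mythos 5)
Your overall architecture matches the paper's: factor the map through $\oned C_{d\delta}\oned\cong\Zdaff$ via the Kleshchev--Muth isomorphism, identify $\mathscr{W}_d$ with $\Zdaff$ modulo the ideal of polynomial generators, and argue that this ideal is exactly the kernel of $\oned C_{d\delta}\oned\to\oned\R{\PR_{\mathsf{RoCK}}}{\Lambda_0}{d\delta}\oned$. Where you go astray is in locating the difficulty, and in the method you propose for the hard direction.

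What you call step (i) and flag as the ``principal difficulty'' is actually the easy direction. The image of $z_1$ under Kleshchev--Muth is a dot on the rightmost (label-$0$) strand of the leftmost $\Ba$-block, and the relation \eqref{eq:bigon-sweep} --- which is just the cyclotomic relation \eqref{cyclotomic} together with the bigon --- rewrites it in one step as a full sweep of that block over the red strand, visibly landing in $\mathcal{I}(\PR_{\mathsf{RoCK}})$. No case analysis on the label of the dotted strand is required; in particular your ``label $i\neq 0$'' case does not arise for the generator of the relevant ideal.

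The genuinely hard direction is showing the kernel is no bigger than $\langle z_1\rangle$, i.e.\ your step (ii). Your proposal to do this by a dimension count via standardization does not go through. For the minimal slope datum $\boldsymbol{\gamma}=(d\delta)$, the structure theorem yields $\End(\Delta(\boldsymbol{\gamma}))^{\operatorname{op}}\cong\Cg{\gamma}=\R{\PR_{\mathsf{RoCK}}}{\Lambda_0}{d\delta}$, which is circular, and $\oned$ is only one of many idempotents comprising $e(\boldsymbol{\gamma})$ --- so the machinery does not hand you $\dim\oned\R{\PR_{\mathsf{RoCK}}}{\Lambda_0}{d\delta}\oned$ independently of understanding $\mathcal{I}(\PR_{\mathsf{RoCK}})$, which is exactly what is at stake. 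Your ``alternative route'' of checking a basis is linearly independent in the quotient is equivalent in content to what must be proved and is not more tractable. The paper instead proves $\ker\subseteq\langle z_1\rangle$ directly: one shows every element of $\oned\mathcal{I}(\PR_{\mathsf{RoCK}})\oned$ factors through a steadying violation on the right (absorbed by the semi-cuspidal ideal in $C_{d\delta}$) or on the left, and in the left case a careful strand-tracking argument forces the escaping group to be a concatenation of $\Ba$-words, so it is swept over the red strand and hence lies in $\langle z_1\rangle$. That bottom/top sorting argument, not the vanishing of $z_1$, is where the work is; your plan has no substitute for it.
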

\begin{proof}
First, by \cite[Prop. 3.17]{kleshchevAffineZigzag2018}, the algebra $\mathscr{W}_d$ is isomorphic to the quotient of $\Zdaff(\mathsf{ A}_{e-1}^{(1)})$ by the two-sided ideal $I$ generated by $z_1$.

Thus, to complete the proof, we need to show that under the isomorphism \cite[Th. 6.16]{kleshchevAffineZigzag2018}, the kernel of the map $\oned C_{d\delta} \oned\to \oned \R{\PR_{\mathsf{RoCK}}}{\Lambda_0}{d\delta} \oned$ is the same as the ideal $I$.  The element $z_1$ maps to this kernel since it can be written as sweeping the leftmost group of strands of with the labels $\Ba_i$ over the red strand.  That is:
\begin{equation}\label{eq:bigon-sweep}
       \begin{tikzpicture}[baseline=25pt]
    \red{0}{0} \lpull{1}{0}{3}{i} 
\dbcf{2}{0}{\dots}
    \lpull{3}{0}{3}{0}
\dcf{-1}{0}{\dots}
\dtcf{2}{0}{\dots}
\de{4}{0}
\red{5}{0}
\idm{6}{0}{i} 
\dcf{7}{0}{\dots}
\idm{8}{0}{0}
\point{(8,1)}
\end{tikzpicture}
\end{equation}
On the other hand, if we have an element of this kernel, it can be written as a diagram with a group of strands (1) on the far right with labels $r_j,\dots, r_1$ such that $\al_{(r_j,\dots, r_1)}=\beta$ is a root with $\PR_{\mathsf{RoCK}}(\beta)>0$ or (2) on the far left with labels $\ell_1,\dots, \ell_{i}$ such that $\beta=\al_{(\ell_1,\dots, \ell_{i})}$ is a root and $\PR_{\mathsf{RoCK}}(\beta)<0$.

We'll use several times that if we have any diagram and divide the strands into $p$ groups at the bottom (or top), then we can factor our diagram as a sum of diagrams such that:
\begin{enumerate}
\renewcommand{\theenumi}{\roman{enumi}}
    \item the top half has all strands straight and no crossings between strands in the same group at the bottom and 
    \item the bottom half has no crossings between strands in different groups.
\end{enumerate}
We'll call this the {\bf top sorting} factorization, and call its reflection the {\bf bottom sorting} factorization.

For relations of type (1), we just use local relations to move the portion of the diagram left of the red strand to the right.  We can arrange our diagram so that at $y=1/2$ we have  $r_j,\dots, r_1$ at the far right, and divide the strands at $y=1/2$ into those right of the red and those left of the red (considering the red strand as a group of its own).  We can now consider the top sorting factorization of the part of the diagram above $y=1/2$ and bottom sorting factorization of the portion below $y=1/2$.  So our diagram first pulls a bunch of strands to the left of the red  (without reordering them), there is a diagram in the middle with no crossings over the red, and the strands come back to the right as shown below:
\begin{equation*}
\begin{tikzpicture}[scale=0.5]

\draw[thick,red]  (0,4) -- (0,-3);
\rect{-5}{0}{4}{1}
  \rect{1}{0}{4}{1}

  \rect{6}{2}{4}{1}
  \rect{6}{-2}{4}{1}

\draw[thick] (7,-1) -- (7,2) node [midway, left] {\tiny{$r_j$}};
  \draw[thick] (8.5,-1) -- (8.5,2) node [midway, left] {\tiny{$\dots$}} node [midway, right] {\tiny{$r_2$}};
  \draw[thick] (9.5, -1) -- (9.5, 2) node [midway, right] {\tiny{$r_1$}};
  \draw[thick] ((2,-3) -- (-4, 0);
  \draw[thick] (4,-3) -- (-2,0);
  \draw[thick] (-4,1) -- (2,4);
  \draw[thick] (-2,1) -- (4,4);

  \draw[thick] (2,0) to [out =275, in =150] (3,-3);
   \draw[thick] (4,0) to [out =275, in =150] (7,-3);

   \draw[thick] (4,1) to [out =95, in =195] (8,4);
      \draw[thick] (2,1) to [out =95, in =225] (2.5,4);

      \draw[thick] (7,3) to [out = 160, in = -30] (3,4);

      \draw[thick] (9.5,3) to [out = 160, in = -30] (6,4);

      \draw[thick] (7,-2) to [out = 200, in = 30] (2.25,-3);
      \draw[thick] (9,-2) to [out=270, in=90] (8,-3);
   
\end{tikzpicture}
\end{equation*}

It's easy to check we can use the relations to pull the strands left to the red back to the right, without changing the right half of the middle diagram.  Thus, we obtain a diagram to the right of the red strand, which is in the image of $\oned R_{d\delta} \oned$ which vanishes in $\oned C_{d\delta} \oned$ since $\beta\prec \delta$.

For relations of type (2), we must work a little harder.  Consider the same sorting factorization discussed above, but now applied to the grouping with first group the far left strand with labels $\ell_1,\dots, \ell_{i}$, then the rest of the black strands left of the red strand at $y=1/2$, then the red strand, then the strands right of the red with $y=1/2$.  

Since $\PR_{\mathsf{RoCK}}(\gamma)<0$, at least one of the labels $\ell_1,\dots, \ell_{i}$ must be $0$.  Let $m$ be the first such index, and consider where this element connects to the top of the diagram.    This 0 at the top must be the last letter in one of the words $\Ba^{i}$. Thus, 
 the $e-1$ strands to the left of this terminal at the top have labels different from $0$ and all the strands which meet $y=1$ to the left of this terminal have labels summing to $m\delta$ for some $m$.  

 If we have any crossings between this strand and the $e-1$ strands to the left, then immediately after that crossing, the sum $\beta'$ of the label on the strand crossed over and all others to the right will be $\alpha_j+m\delta$ and so $\PR_{\mathsf{RoCK}}(\beta')>0$ and this diagram is 0 by a relation of type (1), which we have already covered.  Thus, all $e$ of these strands must connect to the group $\ell_1,\dots, \ell_{i}$.  Applying this to all the other strands with label $0$ in this group, we find that we must have the group of $e$ strands that goes with each of them.  If there are any other strands with a label different from $0$, then we would have $\PR_{\mathsf{RoCK}}(\gamma)>0$, which contradicts the assumption that $\PR_{\mathsf{RoCK}}(\gamma)<0$.  Thus, $(\ell_1,\dots, \ell_{i})$ must be a concatenation of words $\Ba_{q_1},\dots, \Ba_{q_{p/e}}$  for some $q_k\in \{1,\dots, e-1\}$.  

 In particular, we have $\al_{(\ell_1,\dots, \ell_e)}=\delta$, and $(\ell_1,\dots, \ell_e)=\Ba_{q_1}$.  Now, we can apply the relations to move all other strands that cross the red line to the right of it, as well as any crossings or dots on these $e$ strands.  Thus, we have a diagram as on the left hand side of \eqref{eq:bigon-sweep} and applying this relation, we see that our diagram is in the 2-sided ideal generated by $z_1$.  This completes the proof.
\end{proof}

To simplify our proof of \cref{th:EK-TD}, let us first state the general result we use.  Let $A$ be a ring and $A^{\Q}=A\otimes_{\Z} \Q$.  Let $j\colon A\to A^{\Q}$ be the obvious map.   
\begin{lem}\label{lem:quotient}
    Let $B=A/J$ be a quotient which is free as a $\Z$-module and $e\in A$ an idempotent such that $A^{\Q} e A^{\Q}=A^{\Q}$.  Then $J=j^{-1}(A^{\Q}ej(J)eA^{\Q}).$  In particular, if $B'=A/J'$ is another quotient which is free over $\Z$ such that $eJe=eJ'e$, then $J=J'$ and $B\cong B'$.  
\end{lem}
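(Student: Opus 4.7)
The plan is to show that $J$ equals the two-sided $\Q$-ideal $J^{\Q} := J \otimes_{\Z} \Q$ of $A^{\Q}$ contracted along $j$, and that in turn $J^{\Q}$ coincides with $A^{\Q} e j(J) e A^{\Q}$. The ``in particular'' is then immediate: $e j(J) e = j(e J e)$ because $j$ is a ring map fixing $e$, so $e J e = e J' e$ implies $A^{\Q} e j(J) e A^{\Q} = A^{\Q} e j(J') e A^{\Q}$, and therefore $J = J'$ by the main statement (and hence $B \cong B'$).

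For the first step, the inclusion $A^{\Q} e j(J) e A^{\Q} \subseteq J^{\Q}$ is formal, since $J^{\Q}$ is a two-sided ideal of $A^{\Q}$ that contains $e j(J) e$. For the reverse inclusion I will invoke the hypothesis $A^{\Q} e A^{\Q} = A^{\Q}$ to fix an expression $1 = \sum_{i} x_i e y_i$ with $x_i, y_i \in A^{\Q}$, and then for any $a \in J^{\Q}$ expand
\begin{equation*}
a = 1 \cdot a \cdot 1 = \sum_{i,k} x_i \, e \, (y_i a x_k) \, e \, y_k.
\end{equation*}
Each $y_i a x_k$ lies in $J^{\Q}$ because $J^{\Q}$ is two-sided, so $e (y_i a x_k) e$ is a $\Q$-linear combination of elements of $e j(J) e$, placing $a$ in $A^{\Q} e j(J) e A^{\Q}$.

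For the second step, $j^{-1}(J^{\Q}) = J$, the inclusion $J \subseteq j^{-1}(J^{\Q})$ is clear. For the reverse, given $a \in A$ with $j(a) \in J^{\Q}$, I will clear denominators: writing $j(a) = \sum_{i} j(c_i) q_i$ with $c_i \in J$ and $q_i \in \Q$, and choosing a common denominator $N > 0$ so that $n_i := N q_i \in \Z$, the element $N a - \sum_i n_i c_i$ lies in $\ker(j)$, i.e., is $\Z$-torsion in $A$. Projecting to $B = A/J$, the image $N \bar{a}$ is therefore a torsion element of $B$; since $B$ is free over $\Z$ it is torsion-free, forcing $N \bar{a} = 0$ and then $\bar{a} = 0$, so $a \in J$.

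I do not anticipate any genuine obstacle. The only mild subtlety is that $A$ itself is not assumed $\Z$-torsion-free, so $\ker(j)$ can be nonzero; the freeness of $B$ is used in exactly the right place to rule out a nontrivial contribution from torsion after passing to the quotient.
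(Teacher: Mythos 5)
Your proof is correct and follows the same two-step structure as the paper's: establish $J^{\Q}=A^{\Q}ej(J)eA^{\Q}$ and then $J=j^{-1}(J^{\Q})$ using $\Z$-freeness of $B$. The only cosmetic difference is in the first step, where the paper cites the Morita equivalence between $A^{\Q}$ and $eA^{\Q}e$ (applied to $J^{\Q}$ as a bimodule), while you instead expand $1=\sum_i x_iey_i$ by hand---a more elementary but equivalent way of exploiting $A^{\Q}eA^{\Q}=A^{\Q}$; you also fill in the torsion argument for $J=j^{-1}(J^{\Q})$, which the paper leaves implicit.
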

\begin{proof}
    Since $A^{\Q} e A^{\Q}=A^{\Q}$, we have that $ e A^{\Q}e$ and $A^{\Q}$ are Morita equivalent.  Applying this to $J^{\Q}=J\otimes_{\Z}\Q$ as a bimodule, it follows that $J^{\Q}=A^{\Q}eJ^{\Q}eA^{\Q}=A^{\Q}ej(J)eA^{\Q}$.  

    Since $B$ is free as a $\Z$-module, we have that $J=j^{-1}(J^{\Q})$, so applying the equality above, we find that $J=j^{-1}(A^{\Q}ej(J)eA^{\Q})$ as desired.  
\end{proof}

\begin{proof}[Proof of \cref{th:EK-TD}]
We'll use \cref{lem:quotient} to show that $C_{\rho,d}\cong e_+\Rk{\PR_{\mathsf{RoCK}}}{\Lambda_0}{d\delta}{\Z}e_+$ and then show that $e_+$ induces a Morita equivalence by comparing the number of simple modules.  We'll often want to change the base ring $\K$, so let $C^{\K}_{d\delta}, C^{\K}_{\rho,d}, \Rk{\PR_{\mathsf{RoCK}}}{\Lambda_0}{d\delta}{\K}$ denote these rings for a fixed choice of $\K$.

{\bf Matching kernels:} Let us apply \cref{lem:quotient} in the case where 
\[A=C^{\Z}_{d\delta}, \;B=C^{\Z}_{\rho,d}, \;B'=e_+\Rk{\PR_{\mathsf{RoCK}}}{\Lambda_0}{d\delta}{\Z}e_+, \;e=\oned.\]   

Let us check the needed hypotheses:
\begin{enumerate}
    \item The algebras $C_{\rho,d}$ and $\R{\PR_{\mathsf{RoCK}}}{\Lambda_0}{d\delta}$ are free as $\Z$-modules by \cite[Lem. 5.18]{EK2} and \cref{cor:R-free}, respectively.
    \item $C_{d\delta}^{\Q}\oned C_{d\delta}^{\Q}=C_{d\delta}^{\Q}$ by \cite[Lem. 6.22]{kleshchevStratifyingKLR2017}.  
    \item Letting $J=\ker(C_{d\delta}^{\Z}\to C_{\rho,d}^{\Z})$ and $J'=\ker(C_{d\delta}\to \R{\PR_{\mathsf{RoCK}}}{\Lambda_0}{d\delta})$, we have $\oned C_{\rho,d}^{\Z}\oned=\mathscr{W}_d^{\Z}$ by \cite[Lem. 8.10]{EK2} and $\oned\Rk{\PR_{\mathsf{RoCK}}}{\Lambda_0}{d\delta}{\Z}\oned=\mathscr{W}_d^{\Z}$ by \cref{th:KM}.  Thus, we have $eJe=eJ'e=\ker(\Zdaff(\mathsf{ A}_{e-1}^{(1)})\to \mathscr{W}_d^{\Z})$.  
\end{enumerate}
Therefore, we can conclude that $J=J'$, and $C_{\rho,d}^{\Z}$ is isomorphic to $e_+\Rk{\PR_{\mathsf{RoCK}}}{\Lambda_0}{d\delta}{\Z}e_+$, the image of $C_{d\delta}$ in $\Rk{\PR_{\mathsf{RoCK}}}{\Lambda_0}{d\delta}{\Z}$.

{\bf Morita equivalence}: Finally, we need to show that $e_+$ generates $\Rk{\PR_{\mathsf{RoCK}}}{\Lambda_0}{d\delta}{\Z}$ as a 2-sided ideal. Assume not.  

Following the notation of \cite{EK2}, let $\ell(A)$ be the number of non-isomorphic simple modules over a finite-dimensional $\K$-algebra $A$ for a field $\K$.   By \cite[Lem. 8.25]{EK2}, for some prime $p$, we have $\ell(\Rk{\PR_{\mathsf{RoCK}}}{\Lambda_0}{d\delta}{\mathbb{\bar F}_p})>\ell(C^{\mathbb{\bar F}_p}_{\rho,d})$.  

Let $\mathscr{P}^{[1,e-1]}(d)$ be the set of $e-1$-multipartitions of $d$.  For any field $\K$, we have an equality $\ell(\Rk{\PR_{\mathsf{RoCK}}}{\Lambda_0}{d\delta}{\K})=\#\mathscr{P}^{[1,e-1]}(d)$;  this is proven for an arbitrary cyclotomic quotient of the form $R^{\Lambda_0}_{w\Ldot{\Lambda_0}(d\delta)}$ in \cite[Th. 5.7]{EK2} and this extends to $\Rk{\PR_{\mathsf{RoCK}}}{\Lambda_0}{d\delta}{\K}$ by \cref{cor:number of simples}.
Thus, we have $\ell(C_{\rho,d}^{\mathbb{\bar F}_p})<\#\mathscr{P}^{[1,e-1]}(d)$. 

 On the other hand, by \cite[Lem. 8.27]{EK2}, the Turner double $D^{\mathbb{\bar F}_p}(n,d)$ appears as the endomorphisms of a projective $C_{\rho,d}^{\mathbb{\bar F}_p}$-module.  As discussed in the proof of \cite[Th. 8.28]{EK2}, we thus have $\ell(D^{\mathbb{\bar F}_p}(n,d))=\#\mathscr{P}^{[1,e-1]}(d)\leq \ell(C_{\rho,d}^{\mathbb{\bar F}_p})$.  
This contradicts the inequality in the previous paragraph, so the Morita equivalence holds.
\end{proof}

\subsection{Adjustment algebras}

If we consider an algebra $A$ which is free as a module over $\Z$ which is not semi-simple over $\Q$ or $\mathbb{F}_p$, then there is a ``usual package'' of results connecting simples over $\mathbb{F}_p$ and over $\Q$ in terms of a decomposition matrix.  To avoid confusion with other decomposition matrices, this can also be called the {\bf adjustment matrix}.  

We can handle these more generally in the context of a PID $\mathbb{K}$ which surjects to a field $\K$ and has fraction field $K$.
In this context, the adjustment matrix is the Cartan matrix of the  {\bf adjustment algebra} $\aA$ of $A$ given by the quotient $\aA= A_{\K}/(A\cap J(A_{K}))$. 

\begin{defn}
	We call a finite-dimensional graded algebra $A$ over a field $\K$ {\bf mixed} if it has non-negative grading and degree zero part ${}^+A_0$ semi-simple.   
\end{defn}
If an algebra is mixed, then its Jacobson radical is precisely the elements of positive degree.  

Thus, we have that:
\begin{lem}\label{adjustment}
	If an algebra $A$ over $\mathbb{K}$ is Morita equivalent to a non-negatively graded algebra ${}^+A$ with ${}^+A_K$ mixed, then its adjustment algebra ${}^+\aA$ is its degree 0 part ${}^+A_0$.
\end{lem}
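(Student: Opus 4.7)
The strategy is to reduce the problem to a direct computation on ${}^+A$ by invoking Morita invariance, and then to identify the Jacobson radical of ${}^+A_K$ explicitly using the mixed hypothesis. In the first step, I would observe that the construction $B \mapsto B_\K/(B \cap J(B_K))$ is Morita invariant: extension of scalars along $\mathbb{K}\to \K$ and $\mathbb{K}\to K$ both preserve Morita equivalence, the Jacobson radical is a Morita invariant (being characterized module-theoretically as the intersection of annihilators of simples), and intersecting with the $\mathbb{K}$-integral form passes through the progenerator intertwining $A$ with ${}^+A$. Consequently, $\aA$ and the adjustment algebra of ${}^+A$ are Morita equivalent, and it suffices to compute the latter directly; I will denote it ${}^+\aA$.

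In the second step, I would pin down $J({}^+A_K)$. Because ${}^+A_K$ is mixed, it is finite-dimensional and non-negatively graded, hence concentrated in degrees $0,1,\dots,N$ for some $N$, with $({}^+A_K)_0$ semi-simple. The positive-degree part $I := \bigoplus_{n>0} ({}^+A_K)_n$ is a two-sided ideal with $I^{N+1}=0$, and the quotient ${}^+A_K/I \cong ({}^+A_K)_0$ is semi-simple. Since any nilpotent ideal with semi-simple quotient coincides with the Jacobson radical, I conclude $J({}^+A_K) = \bigoplus_{n>0} ({}^+A_K)_n$.

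In the third step, I would intersect and take the quotient. The natural inclusion ${}^+A \hookrightarrow {}^+A_K$ (which exists since ${}^+A$ may be taken torsion-free over $\mathbb{K}$) respects the grading, so
\[
{}^+A \cap J({}^+A_K) \;=\; \bigoplus_{n>0} ({}^+A)_n,
\]
and tensoring with $\K$ gives
\[
{}^+\aA \;=\; ({}^+A)_\K\bigg/\bigoplus_{n>0}({}^+A)_n\otimes_{\mathbb{K}}\K \;=\; ({}^+A)_0\otimes_{\mathbb{K}}\K,
\]
which is precisely the degree-zero part ${}^+A_0$ as claimed.

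The main obstacle I anticipate is the first step: verifying carefully that the adjustment algebra is Morita invariant at the integral level, i.e.\ that one can choose the progenerator over $\mathbb{K}$ so that its base changes to $\K$ and $K$ intertwine the Jacobson radicals and the intersections with the integral forms compatibly. Once that bookkeeping is in place, the rest of the argument is a routine application of the fact that the radical of a mixed algebra is its augmentation ideal.
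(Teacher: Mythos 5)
The paper gives this lemma no explicit proof: it is stated immediately after the observation that the Jacobson radical of a mixed algebra is its positive-degree part, and the "Thus, we have that:" preceding the lemma signals that the proof is considered immediate from that observation. Your proposal fills in exactly the steps the paper leaves implicit, and is correct.

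Your Step 2 is precisely the observation the paper highlights: for a finite-dimensional non-negatively graded algebra with semisimple degree-zero piece, the positive-degree part is a nilpotent two-sided ideal with semisimple quotient, hence equals the radical. Your Step 3 is the routine intersection-and-base-change calculation. The one point worth a brief word is your Step 1 (Morita invariance of $B \mapsto B_\K/(B\cap J(B_K))$). You correctly flag this as the step requiring bookkeeping, and it does go through: a progenerator $P$ for the Morita equivalence over $\mathbb{K}$ induces an inclusion-preserving bijection between two-sided ideals of $A$ and of ${}^+A$ which commutes with base change to $K$ and to $\K$; since this bijection matches $J(A_K)$ with $J({}^+A_K)$, the ideals $A\cap J(A_K)$ and ${}^+A\cap J({}^+A_K)$ correspond, and the quotients are Morita equivalent over $\K$. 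This is consistent with how the lemma is applied in the paper, where the hypothesis of a Morita equivalence defined over the integral form is exactly what is needed. In short, your argument is correct and is the expansion of the paper's one-line justification rather than a different route.
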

This is relevant for us because: 
\begin{lem}
For the case of $I=\Z/e\Z$, for any $\PR, \Lambda, \alpha$, the algebra $A=\R{\PR}{\Lambda}{\alpha}$ satisfies the condition that $A_{\Q}$ is Morita equivalent to a mixed algebra for all $\Lambda,\alpha$.  
\end{lem}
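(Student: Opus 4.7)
My plan is to combine \Cref{conj} with characteristic-zero positivity of cyclotomic KLR algebras of affine type $A$. The affine Weyl group $\widehat{W}$ acts on the level $\PR(\delta) = -1$ slice of the pressure space and is simply transitive on its alcoves (the chambers cut out by the hyperplanes $\PR(\beta)=0$ for real roots $\beta$). The anti-dominant region $\{\PR : \PR(\alpha_i) < 0 \text{ for all } i \in I\}$ is an open cone containing at least one alcove, for instance the one containing $\PRz$. So for any $\PR$, one may choose $w \in \widehat{W}$ with $(w \cdot \PR)(\alpha_i) < 0$ for every simple $\alpha_i$; for such $w \cdot \PR$, the steadied quotient $\R{w \cdot \PR}{\Lambda}{w \Ldot{\Lambda}\alpha}$ is by construction just the cyclotomic KLR algebra $R^\Lambda_{w \Ldot{\Lambda}\alpha}$, and \Cref{conj} provides a Morita equivalence
\[
\R{\PR}{\Lambda}{\alpha} \sim R^\Lambda_{w \Ldot{\Lambda}\alpha},
\]
which, being implemented by a progenerator bimodule already defined over $\Z$, survives base change to $\Q$. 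This reduces the claim to showing that, for every $\Lambda$ and $\beta$, the algebra $R^\Lambda_\beta \otimes \Q$ is Morita equivalent over $\Q$ to a mixed algebra.

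For this last step, the cleanest route is the geometric realization mentioned in the introduction. By \cite[Cor.~4.7 \& Th.~5.1]{WebcatO}, over $\mathbb{C}$ the basic algebra underlying $R^\Lambda_\beta$ is the self-$\Ext$ algebra of a direct sum $L = \bigoplus_i L_i$ of simple DQ-modules on the corresponding Nakajima quiver variety. Any such $\Ext$ algebra is automatically non-negatively graded, and $\Ext^0(L,L) = \bigoplus_i \End(L_i)$ is a direct sum of finite-dimensional division algebras and so semisimple. This shows the basic algebra is mixed over $\mathbb{C}$. Since the non-negativity of the grading and the semisimplicity of the degree-zero part are both detected by graded dimensions that are stable under the flat extension $\Q \subset \mathbb{C}$, the basic $\Q$-algebra is also mixed. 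Alternatively one may cite the Koszulity of cyclotomic KLR algebras of affine type $A$ over characteristic-zero fields directly, since any Koszul algebra is mixed by definition.

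The main obstacle is a clean descent from $\mathbb{C}$ down to $\Q$ in the final step: one must verify that a complete set of primitive orthogonal idempotents, and hence the basic algebra together with its grading, is actually defined over $\Q$, and that the endomorphism rings of simples stay semisimple over $\Q$. The Koszulity route bypasses this technicality entirely, and in any case the needed descent is routine since the KLR grading and the defining relations are already defined over $\Z$.
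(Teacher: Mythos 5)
Your first step — reducing via \Cref{conj} to a cyclotomic quotient $R^\Lambda_\beta$ in the anti-dominant chamber — matches the paper exactly (including the observation that the equivalence is defined over $\Z$). The difference is entirely in how mixedness of $R^\Lambda_\beta\otimes\Q$ is established, and there your primary route has a real gap while your ``alternative'' is close to, but not quite, what the paper does.

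The paper does not apply Koszulity to $R^\Lambda_\alpha$ itself. Instead it cites \cite[Th.~6.17]{bowmanManyIntegral2022} to realize $R^\Lambda_\alpha$ as an idempotent truncation $eT^\vartheta e$ of the (larger) steadied algebra $T^\vartheta$ of \cite{WebRou}, for which Koszulity is known by \cite[Th.~6.2]{WebRou}. Since $e$ is a degree-zero idempotent, mixedness of $T^\vartheta$ passes to $eT^\vartheta e$. This detour is not optional: cyclotomic KLR algebras are not in general Koszul, so the version of your alternative that ``cites the Koszulity of cyclotomic KLR algebras directly'' is not available in the literature in that form. The passage through $T^\vartheta$ is precisely the mechanism that makes a Koszulity citation legitimate.

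Your primary, geometric route has an unresolved descent problem that you flag but do not close. Realizing the \emph{basic} algebra of $R^\Lambda_\beta\otimes\C$ as a self-$\Ext$ algebra of DQ-modules gives mixedness over $\C$, but passing this to $\Q$ requires more than ``stability of graded dimensions'': the basic algebra construction does not commute with base change unless $\Q$ is a splitting field, and the semisimplicity of the degree-zero piece is not a dimension count. Your closing remark that the descent is ``routine since the KLR grading and relations are defined over $\Z$'' addresses neither of these points; the issue is about idempotents and radicals, not generators and relations. The paper's algebraic route through $T^\vartheta$, whose Koszulity is already stated over characteristic-zero fields, avoids the $\C\to\Q$ descent entirely and is the cleaner path.
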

It thus follows that $\R{\PR}{\Lambda}{\alpha}$ satisfies the hypotheses of \cref{adjustment} if and only if this Morita equivalence is defined over $\Z$.   This hypothesis is satisfied by RoCK blocks of $\mathbb{F}_p\Sym_n$, which the Morita equivalence with Turner doubles demonstrates.  More generally, it shows that in Turner's question \cite[Question 179]{Turner} ``Can all blocks of $q$-Schur algebras of weight $w$ be $\Z_{\geq 0}$-graded so that the degree 0 part is Morita equivalent to the James adjustment algebra,'' the second clause of the question follows from the first: if they are given a $\Z_{\geq 0}$-grading, the degree 0 part will necessarily be the adjustment algebra.  It is not clear in which cases one can implement a Morita equivalence to a non-negatively graded algebra over the base rings $\K=\Z$ or $\K=\mathbb{F}_p$.  The authors are not optimistic about the existence of such an equivalence in full generality, but have yet to find a counterexample.  
\begin{proof}
By \cref{conj}, we only need to show this for $A=R^{\Lambda}_{\alpha}$.  By \cite[Th. 6.17]{bowmanManyIntegral2022}, we can write $A$ as the idempotent truncation $A=eT^{\vartheta}e$ where $T^{\vartheta}$ is the steadied quotient considered in \cite[\S 4.2]{WebRou}.   Thus, it's enough to check mixedness for $T^{\vartheta}$, which follows from the Koszulity of \cite[Th. 6.2]{WebRou}, since Koszul algebras are necessarily mixed (see also \cite[Cor. 4.7]{WebwKLR}).
\end{proof}

\printbibliography

\end{document}